\providecommand{\U}[1]{\protect\rule{.1in}{.1in}}
\newtheorem{theorem}{Theorem}[section]
\newtheorem{corollary}[theorem]{Corollary}
\newtheorem{lemma}[theorem]{Lemma}
\newtheorem{proposition}[theorem]{Proposition}
\theoremstyle{definition}
\newtheorem{definition}[theorem]{Definition}
\newtheorem{remark}[theorem]{Remark}
\newtheorem{example}[theorem]{Example}
\let\epsilon\varepsilon
\newcommand{\Rex}{\overline{\mathbb{R}}}
\providecommand{\dom}{\mathop{\rm dom}\nolimits}
\providecommand{\argmin}{\mathop{\rm argmin}\nolimits}
\DeclareFontFamily{U}{txsyc}{}
\DeclareFontShape{U}{txsyc}{m}{n}{
   <-> txsyc
}{}
\DeclareFontShape{U}{txsyc}{bx}{n}{
   <-> txbsyc
}{}
\DeclareFontShape{U}{txsyc}{l}{n}{<->ssub * txsyc/m/n}{}
\DeclareFontShape{U}{txsyc}{b}{n}{<->ssub * txsyc/bx/n}{}
\DeclareSymbolFont{symbolsC}{U}{txsyc}{m}{n}
\DeclareMathSymbol{\df}{\mathrel}{symbolsC}{"42}
\DeclareMathSymbol{\fd}{\mathrel}{symbolsC}{"43}
\DeclareMathSymbol{\lJoin}{\mathrel}{symbolsC}{"58}
\DeclareMathSymbol{\rJoin}{\mathrel}{symbolsC}{"59}
\newcommand{\f}[2]{\frac{#1}{#2}}
\newcommand{\cK}{\mathcal{K}}
\newcommand{\cP}{\mathcal{P}}
\newcommand{\cV}{\mathcal{V}}
\newcommand{\RR}{\mathbb{R}}
\newcommand{\lt}{\left}
\newcommand{\rt}{\right}
\newcommand{\fo}{\forall\ }
\newcommand{\st}{\,:\,}
\newcommand{\ind}[1]{\mathds{1}_{\! #1}}
\newcommand{\bq}{\begin{eqnarray*}}
\newcommand{\bqn}[1]{\begin{eqnarray}\label{#1}}
\newcommand{\eq}{\end{eqnarray*}}
\newcommand{\eqn}{\end{eqnarray}}
\newcommand{\ttsim}{\raise.17ex\hbox{$\scriptstyle\mathtt{\sim}$}}
\newcommand{\kh}{\kern .08em}
\newcommand{\comment}[1]{}
\newcommand{\globalSlope}{\mathscr{G}}
\definecolor{ForestGreenWeb}{rgb}{0.13, 0.55, 0.13}
\begin{document}
%\title{Determination theorems for oriented Carr\'e du Champs operators}
%\author{Aris Daniilidis, Laurent Miclo, David Salas}

\begin{center}
{\LARGE Descent modulus and applications}
\end{center}

\smallskip

\begin{center}
{\large \textsc{Aris Daniilidis, Laurent Miclo, David Salas}}
\end{center}

\medskip

\textbf{Abstract.} The norm of the gradient $\|\nabla f(x)\|$ measures the maximum descent of a real-valued smooth function $f$ at $x$. For (nonsmooth) convex functions, this is expressed by the distance $\mathrm{dist}(0,\partial f(x))$ of the subdifferential to the origin, while for general real-valued functions defined on metric spaces by the notion of metric slope $|\nabla f|(x)$. In this work we propose an axiomatic definition of descent modulus $T[f](x)$ of a real-valued function $f$  at every point $x$, defined on a general (not necessarily metric) space. The definition encompasses all above instances as well as average descents for functions defined on probability spaces. We show that a large class of functions are completely determined by their descent modulus and corresponding critical values.
This result is already surprising in the smooth case: a one-dimensional information (norm of the gradient) turns out to be almost as powerful as the knowledge of the full gradient mapping. In the nonsmooth case, the key element for this determination result is the break of symmetry induced by a downhill orientation, in the spirit of the definition of the metric slope. The particular case of functions defined on finite spaces is studied in the last section. In this case, we obtain an explicit classification of descent operators that are, in some sense, typical.

%\medskip

%\begin{center}
%\noindent\rule{4cm}{1.4pt}
%\end{center}

\vspace{0.40cm}

\noindent\textbf{Key words} Descent modulus, metric diffusion, critical theory, determination of a function.

\vspace{0.40cm}

\noindent\textbf{AMS Subject Classification} \ \textit{Primary} 49J52, 58E05,  60G99;
\textit{Secondary} 30L15, 37C10, 58G32. 

\tableofcontents
%%%%%%%%%%%%%%%%%%%%%%%%%%%%%%%%%%%%%%%%%%
%%%%%%%%%%%%%%%%%%%%%%%%%%%%%%%%%%%%%%%%%%
%%%%%%%%%%%%%%%%%%%%%%%%%%%%%%%%%%%%%%%%%%

\newpage

\section{Introduction\label{sec01:Intro}}
%%%%%%%%%%%%%%%%%%%%%%%%

In \cite{BCD2018} the following surprising result was obtained: two 
$\mathcal{C}^{2}$-smooth convex bounded from below functions defined on a
Hilbert space $\mathcal{H}$ are equal up to an additive constant, provided
they have the same modulus of derivative at every point. In other words, for
this class of functions, equality of moduli of derivatives 
($\Vert \nabla f\Vert =\Vert \nabla g\Vert $) implies equality of the derivatives ($\nabla f=\nabla g$). An alternative way to announce this result is to say that the
operator
\begin{equation}
f\mapsto \Vert \nabla f\Vert   \label{eq:d1}
\end{equation}
determines the function $f$ (modulo a constant) for the class of $\mathcal{C}^{2}$-smooth convex and bounded from below functions defined on the Hilbert space $\mathcal{H}$.\smallskip 

The above result has been extended in \cite{PSV2021} to the class of convex
continuous bounded from below functions on a Hilbert space $\mathcal{H}$. A
further extension for functions defined on an arbitrary Banach space $X$ has
been achieved in \cite{TZ2022}. In both cases, the operator
\begin{equation}
f\mapsto d(0,\partial f(x))\quad \text{(remoteness of the subdifferential)}
\label{eq:d2}
\end{equation}
determines the function $f$ (modulo a constant) for the class of convex
continuous and bounded from below functions on a Banach space $X$.\smallskip 

In \cite{DS2022} the authors worked on an arbitrary metric space $(X,d)$.
 Using the notion of metric slope $|\nabla f|$, they established the following
result: two continuous coercive functions $f,g:X\rightarrow~\mathbb{R}$
are equal, provided they have the same metric slope ($|\nabla f|=|\nabla g|$) and coincide on the (common) critical set $S=|\nabla f|^{-1}(0)=|\nabla
g|^{-1}(0).$ (We refer to Section~\ref{sec02:Pre} for notation and relevant
definitions; see also Subsection~\ref{ssec:2.1} for a more detailed description of the above results.)
Denoting by $\mathcal{K}(X)$ the class of continuous coercive
functions on $X$ (the exact definition of coercivity will be given in~\eqref{sec02:eq:Coercive}), we consider the following equivalent relation: $f\sim g$ if and
only if $f$ and $g$ have the same (metric) critical set and their values coincide there up to a constant, that is,
\begin{equation*}
f\sim g \quad \iff \quad S=|\nabla f|^{-1}(0)=|\nabla
g|^{-1}(0)\text{ and } f\big|_S - g\big|_S = c, \,\,\text{for some }c\in\mathbb{R}.
\end{equation*}
Then, the aforementioned result of \cite{DS2022} asserts that the operator 
\begin{equation}
f\mapsto |\nabla f|  \label{eq:d3}
\end{equation}
is injective on $\mathcal{K}(X)/\!\!\sim $, that is, it determines
functions $f\in \mathcal{K}(X)$ modulo the equivalent relation~$\sim$. \smallskip 

We refer to all above results as determination results on a specific class
of function (modulo a natural equivalent relation). Although the last result
is formulated in an abstract metric space and is quite general, we will show in this work 
that a deeper result is hidden. Namely, the metric structure
is ostensibly required to define the determining operator, but is not really
paramount: the quantities $\Vert \nabla f(x)\Vert $ (in the smooth case) and 
$|\nabla f|(x)$ (in a metric setting) express the steepest descent of $f$ at
a given point $x,$ however, this is not the only possible choice to deal with
descent properties. For instance, one can also consider average descent (based
on some probability measure on the space $X$) and emancipate dependence from
the metric structure. The above leads to a definition of an abstract
descent operator (which does not rely on a distance or even a
topology). This abstract scheme, developed in Section~\ref{sec03:Axiomatic}, encompasses several instances of descent-type operators, 
in particular both paradigms of steepest descent and average descent. 
In Section~\ref{sec04:Averaged} we study general diffusion processes in metric spaces and show that
asymmetrization (via downhill orientation) is the key property to obtain a
determination result, in a complete analogy to the asymmetric definitions of~\eqref{eq:d2}--\eqref{eq:d3}. In the last section we consider the particular
case of descent operators in finite dimensional spaces and obtain an 
explicit classification of a broad subfamily of these operators.

%%%%%%%%%%%%%%%%%%%%%%%%%%%%%%%%%%%%
\section{Notation and Preliminaries}\label{sec02:Pre}
%%%%%%%%%%%%%%%%%%%%%%%%%%%%%%%%%%%%

We set $\Rex = \mathbb{R} \sqcup \{-\infty,+\infty\}$ and $\Rex_+ = \mathbb{R}_+\sqcup\{+\infty\}$. For any $a\in\mathbb{R}$ we set  $a_+=\max\{a,0\}$. For two real numbers $r,s\in \mathbb{R}$, we write $r\wedge s := \min\{ r,s \}$ and $r\vee s := \max\{r,s \}$. Given a nonempty set $X$ and a function $f:X\to \Rex$ we define the domain of $f$ as follows:
\[
\mathrm{dom }(f) :=\{ x\in X:\, f(x)<+\infty \}\,.
\]
Given $\alpha\in \mathbb{R}$, we write
\begin{align*}
    [f\leq \alpha] &:= \{ x\in X\ :\ f(x) \leq \alpha \}\\
    [f < \alpha] &:= \{ x\in X\ :\ f(x) < \alpha \}
\end{align*}
to denote the sublevel set and strict sublevel set of $f$ at value $\alpha$. The sets $[f = \alpha]$, $[f\geq \alpha]$ and $[f>\alpha]$ are defined analogously. \smallskip \newline
We shall often equip the set $X$ with a topology, denoted by $\tau$. In this case, we denote by $\mathcal{B}(X)$ the $\sigma$-algebra of the Borel subsets of the topological space $(X,\tau)$. \smallskip \newline
We say that a function $f:X\to\mathbb{R}\cup\{+\infty\}$ is \textit{$\tau$-lower semicontinuous} if all sublevel sets $[ f\leq a]$, $a\in\mathbb{R}$, are $\tau$-closed subsets of $X$. The function $f$ is called \textit{$\tau$-coercive} if
\begin{equation}\label{sec02:eq:Coercive}
\text{ for every } \alpha\in (-\infty, \sup f),\quad \text{the sublevel set } \,\, [f\leq \alpha]\text{ is $\tau$-compact}.
\end{equation} 
We simply call $f$ lower semicontinuous (respectively, coercive), when no ambiguity on the topology occurs. Notice that the above definition of coercivity encompasses in particular all constant functions. %as well as all lower semicontinuous functions that are bounded from below and truncated from above. 
\smallskip

We further denote by
\begin{equation} 
\mathcal{C}(X) \text{ the space of continuous functions from } X \text{ to } \mathbb{R}
\end{equation}
and  we define the subclass of coercive continuous functions by
\begin{equation}
    \mathcal{K}(X) := \{ f\in \mathcal{C}(X)\ :\ f\,\text{ is }\tau\text{-coercive}\}.
\end{equation}
If $(X,\tau)$ is compact, then every lower semicontinuous functional is coercive and $\mathcal{K}(X) = \mathcal{C}(X)$.\smallskip\newline 
Let $\mathcal{L}_n$ stand for the usual Lebesgue measure over $\mathbb{R}^n$ and let $B_n(x,r)$ (respectively, $\overline{B}_n(x,r)$) be the open (respectively, closed) ball centered at $x\in\mathbb{R}^n$ of radius $r>0$. We also denote by $\mathbb{B}_n$ (respectively, $\mathbb{S}_n$) the closed unit ball (respectively, unit sphere) of $\mathbb{R}^n$. If there is no ambiguity, we omit the subindex $n$ for each of the elements above.  It is well known (see, e.g., \cite{SV1989}) that the $n$-dimensional volume of the ball of radius $r>0$ in $\mathbb{R}^n$ is given by
\begin{equation}\label{eq:VolumeNBall}
    \mathcal{L}_n(B(0,r)) = \frac{\pi^{n/2}}{\Gamma\left( \tfrac{n}{2} + 1 \right)}r^n \,,%\,\equiv\,K_n\cdot r^n\,,
\end{equation}
where $\Gamma$ stands for the gamma function. In particular, the volume of the $n$-dimensional ball $B(0,r)$ is proportional to $r^n$. For any (affine) subspace $W$ of $\mathbb{R}^n$, we denote by $\dim(W)$ its (affine) dimension. \smallskip\newline
We say that a family $\mathcal{F}$ of real-valued functions is a \textit{cone} if for every $f\in\mathcal{F}$ and $r\geq 0$ we have $rf \in \mathcal{F}$. In addition, we say that $\mathcal{F}$ is a \emph{translation cone} if it is closed by translations (that is, for every $f\in\mathcal{F}$ and every constant $c\in\mathbb{R}$, we have that $f+c\in\mathcal{F}$).
 Clearly, the set  $\mathcal{K}(X)$  of coercive continuous functions is a 
translation cone in $\mathcal{C}(X)$. \smallskip \newline 
For an operator $T:\mathcal{F}\to (\Rex_+)^X$, we define  its \textit{domain} 
\[
\dom(T):=\{ f\in \mathcal{F}: \quad T[f](x)<+\infty,\,\,\text{for all } x\in X\}. 
\]
If $(X,d)$ is a metric space, we define the metric slope $|\nabla f|(x)$
of an extended real-valued function $f:X\rightarrow \mathbb{R}\cup \{+\infty
\}$ at the point $x\in \mathrm{dom}(f)$ as follows: 
\begin{equation}\label{eq:metslo}
|\nabla f|(x):=\left\{ 
\begin{array}{cl}
\underset{y\rightarrow x}{\limsup }\frac{\left\{ f(x)-f(y)\right\} _{+}}{d(y,x)}, & \text{ if }x\text{ is not isolated,} \\ 
0, & \text{ otherwise.}
\end{array}
\right.
\end{equation}
In the same setting, the global slope $\globalSlope[f](x)$ is defined as follows:
\begin{equation*}
\globalSlope[f](x):=\,\underset{y\in X}{\sup }\frac{\left\{ f(x)-f(y)\right\} _{+}}{d(y,x)}.
\end{equation*}

Notice that $\globalSlope[f](x)=0$ if and only if $x\in \argmin f$ (\textit{i.e.} $x$ is global minimum of $f$),
while $|\nabla f|(x)=0$ whenever $x$ is a local minimum of $f$. The notions of metric slope (also known as strong slope) and global slope are well known in the literature (see, e.g., \cite{AGS2008} and the references therein). \smallskip\newline

Let us now assume that $X$ is a Banach space with dual $X^{\ast }.$ It is
well-known that if $f:X\rightarrow \mathbb{R}$ is a smooth function, then 
\begin{equation*}
|\nabla f|(x)=\Vert \nabla f(x)\Vert .
\end{equation*}
In the nonsmooth setting, if $f:X\rightarrow \mathbb{R}\cup \{+\infty \}$ a
lower semicontinuous \textit{convex} function, its (Fenchel-Moreau)
subdifferential $\partial f(x)$ at $x\in \mathrm{dom}(f)$ is defined as
follows:
\begin{equation*}
\partial f(x)=\{p\in X^{\ast }:\forall y\in X,\,f(y)\geq f(x)+\langle
p,y-x\rangle \}.
\end{equation*}
It is well-known that $\partial f(x)$ is a closed convex set and it is nonempty if $x$ is a point of continuity of $f$ (see, e.g.,  \cite{RockafellarWets1998}). Moreover, it is known (see, e.g., \cite[Proposition 1.4.4]{AGS2008}) that for any lower semicontinuous convex function over a Banach space, one has
\begin{equation}
|\nabla f|(x)=\globalSlope[f](x)=d(0,\partial f(x)).  \label{eq:global-local}
\end{equation}

\subsection{State-of-the-art}\label{ssec:2.1}

The derivative of a smooth function recovers, up to an additive constant,
the function through integration. In the nonsmooth case, Rockafellar \cite{Rock70} showed that every lower semicontinuous convex function can be
represented through its subdifferential by means of a \textit{nonsmooth
integration}. This result has been refined in \cite{BD2015} for Banach
spaces with the Radon-Nikodym property, provided the function satisfies a
mild coercivity property (namely, the asymptotic cone of its epigraph is
epi-pointed). In this latter case, a partial knowledge of the
subdifferential mapping $\partial f$ is sufficient to recover the function
up to a constant.\smallskip

Historically, this integration result was first stated as a determination result: For every two proper convex lower semicontinuous functions $f,g:X\to\mathbb{R}$ over a Banach space $X$, one has that
\begin{equation}
	(\partial f(x) = \partial g(x), \forall x\in X) \implies f = g + c,\,\, \text{for some }c\in\mathbb{R}.
\end{equation}
This result was first obtained in Hilbert spaces by Moreau \cite{Moreau1965}, and generalized to Banach spaces one year later by Rockafellar \cite{Rockafellar1966}.  A more general result was established by Brezis for monotone operators \cite{BrezisOperateurs1973}, where the same determination result can be obtained in Hilbert spaces only in terms of the element of minimal norm of the subdifferential, that is, 
\begin{equation}
	( \mathrm{proj}(0,\partial f(x)) = \mathrm{proj}(0,\partial g(x)), \forall x\in \mathcal{H}) \implies f = g + c,\,\, \text{for some }c\in\mathbb{R},
\end{equation}
where $\mathrm{proj}(x,A)$ denotes the metric projection of $x$ onto the set $A$. Notice that knowledge of a full gradient $\nabla f(x)$
(respectively, subdifferential $\partial f(x)\subset X^{\ast }$, or $ \mathrm{proj}(0,\partial f(x))\in\mathcal{H}$) at many (all) $x\in X$ 
is already a rich information: at every such point $x$ we need to know a vector 
(respectively, a set of vectors). Nonewithstanding, it has recently
become clear that much less information (namely, a scalar) is often sufficient if our objective is only to determine functions 
(rather than recovering them via an explicit formula). This is resumed below:

\subsubsection{Determination of convex functions\label{ssec:3.1}}

Let $\mathcal{H}$ be a Hilbert space and $f:\mathcal{H}\rightarrow 
\mathbb{R}$ be a $\mathcal{C}^{2}$-smooth convex and bounded from below
function. Set $V(x)=\frac{1}{2}\Vert \nabla f(x)\Vert ^{2}$ and consider the
second-order dynamical system 
\begin{equation}
\ddot{x}(t)=\nabla V(x(t)),  \label{eq:DS2}
\end{equation}%
with initial condition $x(0)=x_{0}\in \mathcal{H}$. It has been shown in 
\cite{BCD2018} that every evanescent solution of~\eqref{eq:DS2} (that is,
every solution satisfying $\Vert \dot{x}(t)\Vert \rightarrow 0$ and $\Vert
\nabla V(x(t))\Vert \rightarrow 0,$ as $t\rightarrow \infty $) is solution
of the first order gradient system:
\begin{equation}
\left\{ 
\begin{array}{l}
\dot{x}(t)=-\nabla f(x(t))\smallskip \\ 
x(0)=x_{0}.
\end{array}
\right.  \label{eq:DS1}
\end{equation}
On the other hand, $\mathcal{C}^{2}$-smoothness assumptions guarantees that~\eqref{eq:DS1} has unique solution. The fact that $f$ is bounded from below
ensures that this solution is evanescent. By straightforward differentiation
we deduce that this solution is also solution of the second-order system~\eqref{eq:DS2}, therefore \eqref{eq:DS2} and \eqref{eq:DS1} have the same
solutions. Since \eqref{eq:DS2} depends only on $\Vert \nabla f\Vert $
(rather than on $\nabla f$), the following conclusion has been obtained:

\begin{itemize}
\item If $f,g:\mathcal{H}\rightarrow \mathbb{R}$ are two $\mathcal{C}^{2} $-smooth convex bounded from below functions, then 
\begin{equation}
\Vert \nabla f\Vert =\Vert \nabla g\Vert \,\Longleftrightarrow \,\nabla
f=\nabla g\,\Longleftrightarrow \,f=g+c,\,\text{for some }c\in \mathbb{R}. \label{eq:tahar}
\end{equation}
\end{itemize}

Notice that $\mathcal{C}^{2}$-smoothness was required in order to define
property \eqref{eq:DS2}. However, this assumption can be relaxed to $\mathcal{C}^{1}$-smoothness, assuming existence of minimizers \cite{Baillon2018}. This is based on the remark that
\begin{equation}
\Vert \nabla f\Vert =\Vert \nabla g\Vert \,\,\Longleftrightarrow \,\langle
\nabla (f+g),\nabla (f-g)\rangle   \label{eq:baillon}
\end{equation}
which ensures in turn that the function $f-g$ is constant among the gradient
orbits of the (convex) function $f+g.$ Since each such orbit \textit{lands}
on the (common) set $S$ of minimizers of the functions $f,$ $g$ and $f+g,$
and since $f-g$ is constant there (with value $\min f-\min g$), the result
follows.\smallskip \newline
A generalization of \eqref{eq:tahar} has been carried out in \cite{PSV2021}, where smoothness assumption has been replaced by continuity.

\begin{itemize}
\item If $f,g:\mathcal{H}\rightarrow \mathbb{R}$ are two convex
continuous and bounded from below functions, then 
\begin{equation}
\begin{aligned}	
d(0,\partial f(x))=d(0,\partial g(x)),\;\text{for all }x\in \mathcal{H}\,\,&\Longleftrightarrow \,\,\partial f=\partial g\\
&\Longleftrightarrow \,\,f=g+c,\,\,\text{for some }c\in\mathbb{R}. 
\end{aligned} \label{eq:salas}
\end{equation}
\end{itemize}

To achieve the above result, the authors studied the subgradient system $\dot{x}(t)\in -\partial f(x(t))$ and showed that in this case the assumption 
$d(0,\partial f(\cdot ))\geq d(0,\partial g(\cdot ))$ yields $f\geq g+c$. This is proven based on two key observations: First, the solution $x(t)$  is not only  a minimizing curve for $f$ (i.e., $f(x(t))\to \inf f$ as $t\to +\infty$), but it is also a minimizing curve for $g$.  Second, the chain rule of the convex subdifferential entails that $(f-g)$ is nonincreasing along $x(t)$. Thus, one can consider $c = \inf f - \inf g$. After proving this comparison principle,  \eqref{eq:salas} follows by symmetry.

%%%%%%%%%%%%%%%%%%%%%%%%%%%%%
\subsubsection{Determination in metric spaces} \label{ssec:3.2}
%%%%%%%%%%%%%%%%%%%%%%%%%%%%%

Convexity assumption was important for the proofs of \eqref{eq:tahar} and 
\eqref{eq:salas}. However, the outlined proof via \eqref{eq:baillon} was
using convexity for two factors: to conclude that every steepest descent curve lands on a critical point (i.e. has an accumulation point in the set of critical points), and that every critical point is global
minimizer, that is,
\begin{equation}\label{eq:stalin}
\mathrm{Crit\ }f=\left\{ x\in X:\ \nabla f(x)=0\right\} =\arg \min f.
\end{equation}
Assuming~\eqref{eq:stalin} and some coercivity condition (instead of convexity), the same argument leads to the following result:
\begin{itemize}
\item If $f,g:\mathcal{H}\rightarrow \mathbb{R}$ are two $\mathcal{C}^{1}$-smooth coercive functions, then:
\begin{equation*}
\left. 
\begin{array}{c}
\Vert \nabla f\Vert =\Vert \nabla g\Vert \text{ \quad and}\medskip \\ 
\mathrm{Crit\ }f=\arg \min f=\arg \min g\neq \emptyset 
\end{array}
\right\} \,\Longrightarrow \;\,f=g+c,\,\,\text{for some }c\in\mathbb{R}.
\end{equation*}

\end{itemize}

On the other hand, all results mentioned in the previous subsection are
strongly based on (sub)gradient dynamical systems and the Hilbertian
structure of the space. Quite surprisingly, it turns out that this structure
is eventually not required. Indeed, in the recent work \cite{TZ2022}, the
result \eqref{eq:salas} has been extended to arbitrary Banach spaces,
through a completely different approach, which was based on the notion of 
\textit{countable orderable sets} introduced in \cite{GZ1992}. In that work
the authors establish that two continuous and bounded from below functions
$f,g:X\rightarrow \mathbb{R}$ defined on a metric space $(X,d)$ and with
finite global slopes are equal up to a constant, provided they have the same
global slopes at every point. In other words:
\begin{equation}
\globalSlope[f]=\globalSlope[g]<+\infty \,\Longrightarrow \,f=g+c,\text{ for some }c\in\mathbb{R}.  \label{eq:Glodet}
\end{equation}
The key technique to achieve such a result is the construction of a minimizing sequence by means of the global slope. The construction is based on the following result (proved in \cite{TZ2022}):  for every sequence $\{x_i\}_i$  of the metric space $(X,d)$ and for every proper extended real-valued function $f:X\to \mathbb{R}\sqcup\{\infty\}$ it holds:
\begin{equation}
	\left( \lim_{i\to \infty}\globalSlope[f](x_i) = 0 \text{ and }\sum_{i=1}^\infty \globalSlope[f](x_i)d(x_i,x_{i+1}) < \infty\right) \implies \liminf_{i\to \infty} f(x_i ) = \inf_X f.
\end{equation}

 Although the setting is quite general (metric spaces), the notion of global
slope is rather restrictive, since it does not coincide with the modulus of
the derivative in the smooth case. But this notion is a very good fit for
convex functions defined on a general Banach space $X.$ In this case, 
combining \eqref{eq:global-local} with \eqref{eq:Glodet} yields a generalization of~\eqref{eq:salas}.\smallskip 

In an independent work~\cite{DS2022}, the authors considered the local notion of metric slope and established the following comparison result for the class of continuous coercive functions. In what follows we denote by 
\begin{equation*}
\mathrm{Crit }f=\left\{ x\in X:\ |\nabla f|(x)=0\right\} 
\end{equation*}
the set of (metrically) critical points.

\begin{proposition}[slope comparison] \label{prop:metric-comp}
Let $(X,d)$ be a metric space and  $f,g:X\rightarrow \mathbb{R}$ be two continuous coercive functions.
Assume that

\begin{itemize}
\item[(i).] $|\nabla f|(x)>|\nabla g|(x),\;\,$for all $x\in X\diagdown 
\mathrm{Crit }f.$ ;\quad and

\item[(ii).] $f(x)>g(x)$, for all $x\in \mathrm{Crit }f.$
\end{itemize}

Then, $f>g$.
\end{proposition}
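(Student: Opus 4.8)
The plan is to reduce the global inequality $f>g$ to a pointwise one and then propagate the hypotheses by letting a point flow downhill along $f$. Fix $x_0\in X$; we must show $f(x_0)>g(x_0)$. If $x_0\in\mathrm{Crit}\,f$ this is exactly hypothesis (ii), so assume $|\nabla f|(x_0)>0$. The guiding idea is to move $x_0$ in the direction of steepest descent of $f$ until it reaches the critical set $\mathrm{Crit}\,f$, and to show that the difference $f-g$ never increases along this motion; since $f-g>0$ at the landing point by (ii), it must already be positive at $x_0$.

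The comparison engine is an infinitesimal estimate. Take $x\notin\mathrm{Crit}\,f$ and, using the definition of the metric slope as a $\limsup$, pick $y$ close to $x$ that nearly realizes the steepest descent of $f$, i.e. $f(x)-f(y)\ge(|\nabla f|(x)-\epsilon)\,d(x,y)$. By the same definition applied to $g$, \emph{every} $y$ sufficiently close to $x$ automatically satisfies $g(x)-g(y)\le(|\nabla g|(x)+\epsilon)\,d(x,y)$. Subtracting,
\[
(f-g)(x)-(f-g)(y)\ \ge\ \big(|\nabla f|(x)-|\nabla g|(x)-2\epsilon\big)\,d(x,y),
\]
which is strictly positive once $2\epsilon<|\nabla f|(x)-|\nabla g|(x)$, the gap being positive by (i). Thus at every noncritical point one can move to a nearby point that strictly decreases both $f$ and $f-g$. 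Passing to the continuous limit, along a curve of maximal slope $\gamma$ for $f$ one expects $\tfrac{d}{dt}f(\gamma(t))=-|\nabla f|(\gamma(t))\,|\gamma'|(t)$ while $\tfrac{d}{dt}g(\gamma(t))\ge-|\nabla g|(\gamma(t))\,|\gamma'|(t)$, so that $t\mapsto(f-g)(\gamma(t))$ is nonincreasing.

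To turn this into a proof I would construct, starting from $x_0$, a curve of maximal slope $\gamma:[0,T)\to X$ for $f$ (equivalently, the limit of a minimizing-movement/discrete steepest-descent scheme built from the local step above). Coercivity of $f$ confines $\gamma$ to the compact sublevel set $[f\le f(x_0)]$, on which $f$ is bounded below; the energy identity then forces $|\nabla f|(\gamma(t_n))\to0$ along some $t_n\to T$, and compactness yields a limit point $x_\ast$. Lower semicontinuity of the slope (or passage to its relaxation) gives $x_\ast\in\mathrm{Crit}\,f$. Since $f-g$ is nonincreasing along $\gamma$ and $f,g$ are continuous, $(f-g)(x_0)\ge\lim_n(f-g)(\gamma(t_n))=(f-g)(x_\ast)>0$ by (ii), whence $f(x_0)>g(x_0)$. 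As $x_0$ was arbitrary, $f>g$.

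The main obstacle is everything packed into ``construct a curve of maximal slope and land it on a critical point'' over a bare metric space: one must guarantee existence and absolute continuity of such a curve, justify the chain-rule/upper-gradient inequalities that integrate the pointwise slope estimates into the monotonicity of $f-g$, and verify that the terminal accumulation point genuinely satisfies $|\nabla f|(x_\ast)=0$ rather than being merely near-critical. Coercivity supplies the compactness for the landing step, but the delicate technical core is the interplay between the one-sided (descending) nature of the metric slope and the upper-gradient bound it must provide for $g$; it is precisely the asymmetry stressed in the introduction --- the slope measures \emph{descent} only --- that makes the estimate for $g$ go through.
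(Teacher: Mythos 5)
Your infinitesimal estimate is correct and is in fact the engine the paper uses: at a noncritical $x$, the $\limsup$ definition of the slope produces a nearby $y$ with $f(x)-f(y)>0$ and $(f-g)(x)-(f-g)(y)>0$; this is precisely the ``one-step descent property'' (cf.\ \eqref{sec03:eq:one-stepDescent}) that drives Lemma~\ref{sec03:lemma:strictComparison}. The genuine gap is in how you propagate it. You ask for a curve of maximal slope for $f$ emanating from $x_0$, an energy identity forcing $|\nabla f|(\gamma(t_n))\to 0$, a chain-rule/upper-gradient inequality for $g$ along $\gamma$, and lower semicontinuity of the slope to conclude the landing point is critical. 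None of these is available for merely continuous coercive functions on a bare metric space: existence of curves of maximal slope is itself a hard problem (the paper cites \cite{DIL2015}, which needs the \emph{limiting} slope and extra hypotheses), the slope of a continuous function need not be an upper gradient (the Cantor-staircase example in the paper shows pointwise slope bounds do not integrate), and $|\nabla f|$ is not lower semicontinuous, so even if $|\nabla f|(\gamma(t_n))\to 0$ the accumulation point need not lie in $\mathrm{Crit}\,f$ as defined. You flag these issues yourself, but they are not technicalities to be filled in later --- they are the whole difficulty, and in this generality the continuous-flow route does not go through.

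The paper's proof (described after the statement, and carried out in full for the abstract analogue in Lemma~\ref{sec03:lemma:strictComparison}) avoids flows entirely: it argues by contradiction and iterates your one-step estimate \emph{transfinitely}. Starting from a hypothetical point where the conclusion fails, one builds a generalized sequence $\{z_\alpha\}$ indexed by ordinals along which $f$ strictly decreases and $g-f$ strictly increases; at limit ordinals, coercivity (compactness of the sublevel set $[f\le f(z_0)]$) and continuity supply an $\omega$-limit point at which the monotonicity is preserved, so the construction can continue; hypothesis (ii) guarantees the sequence never meets $\mathrm{Crit}\,f$; and a cardinality obstruction (the ordinals eventually exhaust $X$) yields the contradiction. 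This replaces all the missing analytic machinery --- existence, chain rule, identification of the limit as critical --- with compactness plus transfinite induction. If you want to salvage your write-up, replace the curve of maximal slope by this transfinite iteration of your own one-step estimate; an ordinary countable sequence of steps will not suffice, since it may converge to a noncritical point before reaching $\mathrm{Crit}\,f$.
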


The proof was obtained by reasoning to contradiction and using discrete
iterations and transfinite induction. The following determination result was
then obtained as consequence of Proposition~\ref{prop:metric-comp}.

\begin{theorem}[Determination in metric spaces]
\label{thm:metric-det}Let $(X,d)$ be a metric space and $f,g:X\rightarrow \mathbb{R}$ be two continuous coercive functions. Assume that

\begin{itemize}
\item[(i).] $|\nabla f|(x)=|\nabla g|(x)<+\infty ,\;\,$for all $x\in X$
;\quad and

\item[(ii).] $f(x)=g(x)$, for all $x\in \mathrm{Crit\ }f.$
\end{itemize}

Then, $f=g$.
\end{theorem}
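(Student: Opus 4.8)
The plan is to deduce the determination result from the slope comparison principle (Proposition~\ref{prop:metric-comp}) by a perturbation argument. Since the hypotheses of the theorem only provide \emph{equalities} (of slopes, and of values on the critical set), whereas Proposition~\ref{prop:metric-comp} demands \emph{strict} inequalities, the idea is to break these ties by an infinitesimal perturbation of $f$, apply the strict comparison principle, and then pass to the limit. I would establish the two inequalities $f\geq g$ and $g\geq f$ separately, each by such an argument; the conclusion $f=g$ follows immediately.

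First I would record two preliminary reductions. Since $|\nabla f|=|\nabla g|$ on all of $X$, the two critical sets coincide; write $S:=\mathrm{Crit}\,f=|\nabla f|^{-1}(0)=|\nabla g|^{-1}(0)=\mathrm{Crit}\,g$. Next, because $f$ and $g$ are continuous and coercive, each attains its minimum on a nonempty compact sublevel set and is therefore bounded from below. Replacing $f$ and $g$ by $f+c$ and $g+c$ for a sufficiently large constant $c$ leaves the metric slopes, the set $S$, and the hypothesis $f|_S=g|_S$ unchanged, and it is equivalent to prove the theorem for the shifted pair; hence I may assume from the outset that $f>0$ on $X$ (so that $g=f>0$ on $S$ as well).

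The key perturbation is \emph{multiplicative}. Fix $t>0$ and set $f_t:=(1+t)f$. Then $f_t\in\mathcal{K}(X)$, and directly from the definition~\eqref{eq:metslo} one has $|\nabla f_t|(x)=(1+t)\,|\nabla f|(x)$, so that $\mathrm{Crit}\,f_t=S$. I would then verify that the pair $(f_t,g)$ satisfies the hypotheses of Proposition~\ref{prop:metric-comp}. For $x\in X\setminus S$ we have $0<|\nabla f|(x)<+\infty$ by hypothesis~(i), whence $|\nabla f_t|(x)=(1+t)|\nabla f|(x)>|\nabla f|(x)=|\nabla g|(x)$; this is exactly where the finiteness assumption $|\nabla f|<+\infty$ is indispensable, since otherwise the factor $(1+t)$ would not produce a strict increase. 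For $x\in S=\mathrm{Crit}\,f_t$ we have $f_t(x)=(1+t)f(x)>f(x)=g(x)$, using $f>0$. Proposition~\ref{prop:metric-comp} then yields $f_t>g$, i.e. $(1+t)f(x)>g(x)$ for every $x\in X$; letting $t\downarrow 0$ gives $f\geq g$. Repeating the argument with the roles of $f$ and $g$ interchanged, i.e. perturbing $g$ to $g_t:=(1+t)g$ and using $g>0$ on $S$, yields $g\geq f$, and therefore $f=g$.

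I expect the argument to be essentially routine once the comparison principle is granted; the delicate points are the reductions of the second paragraph and, above all, the realization that the slope must be perturbed \emph{multiplicatively} rather than additively. An additive perturbation such as $f+t\,(f-\min f)$ fails precisely at global minimizers, where it does not strictly raise the value on $S$, whereas the multiplicative perturbation together with the normalization $f>0$ strictly increases both the slope off $S$ and the value on $S$ simultaneously. The main conceptual obstacle is thus to identify a single perturbation that converts all the equalities of the hypotheses into the strict inequalities required by Proposition~\ref{prop:metric-comp}, while remaining inside the class $\mathcal{K}(X)$ of continuous coercive functions.
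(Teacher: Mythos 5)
Your proof is correct and follows essentially the same route as the paper: the abstract comparison principle (Theorem~\ref{sec03:thm:ComparisonPrinciple}) is proved there by exactly this multiplicative perturbation $f_\varepsilon=(1+\varepsilon)f$, an appeal to the strict comparison lemma, and a passage to the limit $\varepsilon\downarrow 0$, with the determination theorem then obtained by symmetry. The only (harmless) variation is that you handle the critical values by first translating so that $f>0$, whereas the paper keeps the extra term $\varepsilon\min f$ in the inequality $f_\varepsilon(x)\geq g(x)+c+\varepsilon\min f$ and lets it vanish in the limit, which avoids the normalization step.
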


The above result asserts that information on the metric slope $|\nabla f|$
and critical values is sufficient to determine every continuous coercive
functions with finite slope (therefore, in particular, every Lipschitz
continuous coercive functions). Taking into account the pathologies that
prevail Lipschitz functions, the above statement appears close to be
optimal: In \cite{DS2022} several counterexamples are presented to show the
pertinence of the assumptions. This being said, there is still room for 
improvements: indeed, assuming $X$ is a complete metric space, it seems 
plausible to relaxe coercivity/compactness assumption (which is required in the current proof), 
by an alternative assumption ensuring the existence of appropriate descent (generalized) sequences that link any point to the critical set.%existence of global minima.

\subsection{Description of the current work}

Revising the arguments employed in \cite{DS2022} for the proof of
Proposition~\ref{prop:metric-comp} and Theorem~\ref{thm:metric-det} we
observe that continuity and coercivity are topological notions, while the
metric structure of $(M,d)$ is only required in order to define the metric slope,
see~\eqref{eq:metslo}. In particular, one can assume continuity and
coercivity with respect to another topology (not related to the given distance $d$) 
and the topological part of the proof can be completely
decoupled.\smallskip \newline
In this work we show that a similar result to Theorem~\ref{thm:metric-det}
holds for any topological space equipped with a Borel probability measure $\mu $, if we replace the metric slope $|\nabla f|(x)$ (corresponding to the
steepest descent at $x$) by the $\mu $-\textit{average descent }$T_{\mu
}(f)(x)$ \textit{at }$x$ given by
\begin{equation*}
T_{\mu }(f)(x):=\int_{X}\left\{ f(x)-f(y)\right\} _{+}d\mu
(y)=\int_{[f\leq f(x)]}\left[ f(x)-f(y)\right] d\mu (y).
\end{equation*}
More generally, we introduce an abstract descent operator $T[f]$ (see
Definition~\ref{sec03:def:ModulusOfDescent}) that encompasses both metric
and global slopes (in metric spaces) and average descent (in probability
spaces) as well as many other instances, see Subsection~\ref{sec03-03:Stability} for further examples and stability properties of this
operator. We then establish an abstract determination result (Theorem~\ref{sec03:thm:Determination}) revealing that the metric structure is neither
minimal nor optimal framework, as hinted by the topological and metric decoupling observed in \cite{DS2022}.
%in Subsection~\ref{ssec:3.2}. 
In Section~\ref{sec04:Averaged} we study general stochastic processes in metric
spaces and define adequate oriented operators (particular instances of
Definition~\ref{sec03:def:ModulusOfDescent}) that allow to obtain
determination results. Finally, in Section~\ref{sec:5} we introduce an
equivalence relation among descent moduli for functions $f\in \mathbb{R}^{\mathcal{V}}$ defined in finite spaces $\mathcal{V}$ and show that a typical
descent modulus is equivalent to a steepest descent with respect to a
prescribed active neighborhood system (see Theorem~\ref{critmap}).

%\newpage

%%%%%%%%%%%%%%%%%%%%%%%%%%%%%%%%%%
%%%%%%%%%%%%%%%%%%%%%%%%%%%%%%%%%%

\section{Descent modulus: definition, properties and main examples} \label{sec03:Axiomatic}

%%%%%%%%%%%%%%%%%%%%%%%%%%%%%%%%%%
%%%%%%%%%%%%%%%%%%%%%%%%%%%%%%%%%%

Let $\mathcal{F}$ be a family of functions
from a nonempty set $X$ to $\mathbb{R}$. For an operator $T:\mathcal{F}\rightarrow (\overline{\mathbb{R}}_{+})^{X}$, we define its 
\textit{domain} 
\begin{equation}\label{eq:domT}
\dom(T):=\{f\in \mathcal{F}:\quad T[f](x)<+\infty ,\,\,\text{for all }x\in
X\}.
\end{equation}
We also define the set $\mathcal{Z}_{T}(f)$ of \textit{$T$-critical points}
of $f\in \mathcal{F}$ as follows: 
\begin{equation}
\mathcal{Z}_{T}(f)=\{x\in X\ :\ T[f](x)=0\}.  \label{eq:ZTf}
\end{equation}
(Note that every $T$-critical point of $f$ is a global minimizer for the
function $T[f]$.)\smallskip\newline
In this section we give an axiomatic definition of an abstract descent operator,
that is, an operator $T$ acting on (a certain class of) functions $f$ from $X$ to $\mathbb{R}$. This operator associates to each point $x\in X$ an extended nonnegative number $T[f](x)\in \mathbb{R}\cup \{+\infty
\}$ which corresponds to an abstract measure of descent (henceforth called 
\textit{descent modulus}) of $f$ at $x$. \smallskip \newline
The required properties of this abstract definition will be kept minimal to
encompass several instances stemming from classical and variational
analysis, metric geometry and stochastic processes: in particular, the
metric slope (used in~\cite{DS2022}), the global slope (used in~\cite{TZ2022}) and the notion of average descent (that will be discussed later in this
work) are all captured by the proposed abstract scheme. 
%This scheme outlines
%the minimal properties that guarantee that the techniques of~\cite{DS2022}
%can be applied to obtain determination results, yielding the identification
%of a large class of functions. 

%%%%%%%%%%%%%%%%%%%%%%%%%%%%%%%%
\subsection{Axiomatic definition\label{sec03-01:Construction}}
%%%%%%%%%%%%%%%%%%%%%%%%%%%%%%%%

Let $\mathcal{F}$ be a translation cone in the space of
functions from a nonempty set $X$ to $\mathbb{R}$. 

\begin{definition}
[Abstract descent modulus]\label{sec03:def:ModulusOfDescent} Let
$T:\mathcal{F}\rightarrow(\overline{\mathbb{R}}_{+})^{X}$ be a nonlinear
operator.\\ We say that:\smallskip\newline\textrm{(D1).} $T$ \textit{preserves global
minima}, if for every function $f\in\mathcal{F}$ and $x\in X$ we have
\[
x\in\argmin f\implies x\in\mathcal{Z}_{T}(f)\,.
\]
%(Notice that if the set $\mathcal{Z}_{T}(f)$ is nonempty, then it coincides with $\argmin T(f)$.)
%\smallskip\newline
\textrm{(D2).} $T$ \textit{is monotone at $x$},
if for every $f,g\in\mathcal{F}$ we have:
\begin{equation}
\forall z\in X:\,(f(x)-f(z))_{+}\geq(g(x)-g(z))_{+}\,\,\implies\,\,T[f](x)\geq
T[g](x).\label{eq:ad}
\end{equation}
\textrm{(D3).} $T$ \textit{is scalar-monotone at $x$}, if for every function
$f\in\mathcal{F}$ and $r>1$, we have
\[
0<T[f](x)<+\infty\,\implies\,T[f](x)<T[rf](x).
\]
The operator $T$ is called (scalar) monotone, if it is (scalar) monotone at
every $x\in X$. \smallskip\newline
We say that $T$ is a \textit{descent modulus} for the class
$\mathcal{F}$ if properties (\textrm{D1})--(\textrm{D3}) hold, that is, if $T$ is monotone, scalar-monotone and preserves global minima.
\end{definition}

Before we proceed, let us have a brief discussion on the above properties:\medskip
\newline Property \textrm{(D1)} states that there is \textit{no descent at global minima}; thus
$T[f](x)=0$ holds at every $x\in\argmin f$.\smallskip\newline Property \textrm{(D2)}
expresses the fact that the \textit{amount of descent} of $f$ at a point $x$ depends only on the sublevel set $\lbrack f\leq f(x)]$ and  is captured by the function $z\mapsto(f(x)-f(z))_{+}$. Accordingly, for a
fixed $x\in X$, the relation
\begin{equation*}\label{eq:PreorderDef}
g\preceq_{x}f\quad\underset{\text{def}}{\iff}\quad\forall z\in X:\quad(g(x)-g(z))_{+}\leq
(f(x)-f(z))_{+}
\end{equation*}
is a preorder relation on $\mathcal{F}$ which roughly reads as follows: ``$f$ has more descent than $g$ at $x$". Under this terminology, (\textrm{D2}) requires the mapping $\mathcal{F}\ni f\mapsto T[f](x)$ to be monotone
with respect to $\preceq_{x}$. \smallskip\newline Notice further that
(\ref{eq:ad}) yields the following: If $g\preceq_x f$, then
\begin{equation}\label{eq:ad2}
z\notin\lbrack f< f(x)]\quad\Longrightarrow\quad g(z)\geq g(x)\quad\text{(that
is, } \,z\notin\lbrack g<g(x)]\text{)}. 
\end{equation}
This means that $g\preceq_x f$ implies that $[g< g(x)]\subset [f< f(x)]$. Finally, scalar-monotonicity in {\rm (D3)} expresses the fact that the descent of the function $g=rf$ should be larger than the one of~$f$, when $r>1$. \smallskip\newline    
In conclusion, the above axioms (D1)--(D3) are natural requirements for an abstract notion of descent of a function $f$ at a point $x$. 
The following proposition reveals further properties that can be derived from the axioms of Definition~\ref{sec03:def:ModulusOfDescent}.

\begin{proposition}[Properties of descent moduli]\label{sec03:prop:EquivalentProperties} Let $\mathcal{F}\subset\mathcal{C}(X)$ (as before) and $T:\mathcal{F}\to (\Rex_+)^X$ be an operator. Then: \smallskip\newline
$\mathrm(a).$ {\rm (one-step descent property)} $T$ is monotone if and only if for every $f,g\in \mathcal{F}$  and $x\in X$
	\begin{equation}\label{sec03:eq:one-stepDescent}
		T[f](x) > T[g](x)\quad \implies \quad \exists z\in [f<f(x)]:\,\, f(x) - f(z) > g(x) - g(z).  
		%\exists z\in X, \begin{array}{c}
		%	f(x) > f(z)\text{ and }\\
		%	f(x) - f(z) > g(x) - g(z).
		%\end{array}
	\end{equation}
$\mathrm(b).$ {\rm (translation-invariance)} If $T$ is a descent modulus for $\mathcal{F}$, then for every $c\in \mathbb{R}$ and $f\in \mathcal{F}$ we have: $$T[f] = T[f+c].$$
$\mathrm(c).$ {\rm (strict monotonicity)} Let $T$ be monotone at $x\in X$. Then the following are equivalent:\smallskip\newline
-- $\mathrm(c_1).$ $T$ is scalar-monotone at $x$.\smallskip\newline
-- $\mathrm(c_2).$ For every $f,g\in \mathcal{F}$ with $T[f](x)>0$, $T[g](x)<+\infty$ and $[g\leq g(x)]\subset [f\leq f(x)]$,
the implication 
$$\exists \delta>0:\, \forall z \in [g\leq g(x)]\,\, \implies\,\, f(x) - f(z)\geq (1+\delta)(g(x) - g(z)),$$
yields $$T[f](x) > T[g](x).$$
-- $\mathrm(c_3).$ For every $f\in \mathcal{F}$, $x\in X$ and $r\in (1,+\infty)$ such that $0<T[f](x)$ and $T[rf](x) <+\infty$, the mapping 
$$ [0,r-1]\ni \delta\,\,\mapsto \,\, T[(1+\delta)f](x)$$ is strictly increasing.
\end{proposition}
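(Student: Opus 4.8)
The plan is to handle the three items separately, noting that (a) and (b) rest on monotonicity (D2) alone, whereas (c) is a genuine cyclic equivalence that also uses scalar-monotonicity. For (a) I would argue both implications by contraposition. In the forward direction, assuming $T$ is monotone at $x$ and $T[f](x)>T[g](x)$, I would show that if no witness $z\in[f<f(x)]$ with $f(x)-f(z)>g(x)-g(z)$ existed, then splitting according to whether $z\in[f<f(x)]$ (where $(f(x)-f(z))_+=f(x)-f(z)$) or not (where $(f(x)-f(z))_+=0$) would yield $f\preceq_x g$, and (D2) would force $T[g](x)\geq T[f](x)$, a contradiction. For the converse, assuming the one-step property and $g\preceq_x f$, if $T[f](x)<T[g](x)$ then applying the one-step property to the pair $(g,f)$ produces $z\in[g<g(x)]$ with $g(x)-g(z)>f(x)-f(z)$, and checking the two sign cases for $f(x)-f(z)$ gives $(g(x)-g(z))_+>(f(x)-f(z))_+$, contradicting $g\preceq_x f$. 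The only delicate point is passing correctly between the signed quantities and their positive parts, which is exactly the role of the restriction $z\in[f<f(x)]$. For (b), the observation is that $f$ and $f+c$ are $\preceq_x$-equivalent for every $x$, since the constant cancels in $((f+c)(x)-(f+c)(z))_+=(f(x)-f(z))_+$; as $\mathcal{F}$ is a translation cone, $f+c\in\mathcal{F}$, and applying (D2) in both directions gives $T[f](x)=T[f+c](x)$.

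For (c), assuming $T$ monotone at $x$, I would establish the cycle $(c_2)\Rightarrow(c_1)\Rightarrow(c_3)\Rightarrow(c_2)$. For $(c_2)\Rightarrow(c_1)$: given $0<T[f](x)<+\infty$ and $r>1$, apply $(c_2)$ to the pair $(rf,f)$. The sublevel sets $[f\leq f(x)]$ and $[rf\leq rf(x)]$ coincide, monotonicity gives $T[rf](x)\geq T[f](x)>0$, and the choice $\delta=r-1$ makes $rf(x)-rf(z)=(1+\delta)(f(x)-f(z))$ on $[f\leq f(x)]$, so the hypothesis of $(c_2)$ is met and yields $T[rf](x)>T[f](x)$. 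For $(c_1)\Rightarrow(c_3)$: for $0\leq\delta_1<\delta_2\leq r-1$, write $(1+\delta_2)f=s\,[(1+\delta_1)f]$ with $s=\frac{1+\delta_2}{1+\delta_1}>1$; monotonicity yields $0<T[f](x)\leq T[(1+\delta_1)f](x)\leq T[rf](x)<+\infty$ (using $f\preceq_x(1+\delta_1)f\preceq_x rf$), so scalar-monotonicity applied to $h=(1+\delta_1)f$ gives $T[(1+\delta_1)f](x)<T[(1+\delta_2)f](x)$, i.e. strict monotonicity of $\delta\mapsto T[(1+\delta)f](x)$.

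For $(c_3)\Rightarrow(c_2)$, given the data of $(c_2)$, I would first dispose of the degenerate cases: if $T[g](x)=0$ the conclusion is immediate from $T[f](x)>0$, and if $T[f](x)=+\infty$ it is immediate from $T[g](x)<+\infty$. In the remaining case $0<T[g](x)$ and $T[f](x)<+\infty$, the pointwise inequality $f(x)-f(z)\geq(1+\delta)(g(x)-g(z))$ on $[g\leq g(x)]$, together with the trivial bound $(g(x)-g(z))_+=0$ off this set, shows $(1+\delta)g\preceq_x f$; monotonicity then gives $T[(1+\delta)g](x)\leq T[f](x)<+\infty$. Finally, $(c_3)$ applied to $g$ with $r=1+\delta$ makes $\eta\mapsto T[(1+\eta)g](x)$ strictly increasing on $[0,\delta]$, whence $T[g](x)<T[(1+\delta)g](x)\leq T[f](x)$. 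I expect this last implication to be the main obstacle: it requires interposing the scaled function $(1+\delta)g$ between $g$ and $f$ — comparing $(1+\delta)g$ with $f$ through the preorder and (D2), and separating $g$ from $(1+\delta)g$ through the strict one-parameter monotonicity of $(c_3)$ — while carefully isolating the degenerate cases where the finiteness needed to invoke $(c_3)$ is not available a priori.
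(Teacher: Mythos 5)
Your proof is correct. Parts (a) and (b) coincide with the paper's argument: both directions of (a) by contraposition, passing between the signed differences and their positive parts exactly as the paper does, and (b) via the cancellation of the constant in $(f(x)+c-(f(z)+c))_+$ together with (D2) applied in both directions. For part (c), however, you close the equivalence by the cycle $(c_2)\Rightarrow(c_1)\Rightarrow(c_3)\Rightarrow(c_2)$, whereas the paper runs the cycle the other way, $(c_1)\Rightarrow(c_2)\Rightarrow(c_3)\Rightarrow(c_1)$. The individual links are genuinely different: your $(c_2)\Rightarrow(c_1)$ is a direct specialization of $(c_2)$ to the pair $(rf,f)$ with $\delta=r-1$ (the paper never needs this implication in isolated form), and your $(c_3)\Rightarrow(c_2)$ is the real novelty — you interpose $(1+\delta)g$ between $g$ and $f$, use (D2) to get $T[(1+\delta)g](x)\leq T[f](x)$, and then invoke the one-parameter strict monotonicity of $(c_3)$ to separate $T[g](x)$ from $T[(1+\delta)g](x)$; the paper instead uses $(c_1)$ for that separation and reaches $(c_2)$ from $(c_3)$ only by composing through $(c_1)$. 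Your handling of the degenerate cases ($T[g](x)=0$ or $T[f](x)=+\infty$) before invoking $(c_3)$ is exactly the care that implication needs, and the membership $(1+\delta)g\in\mathcal{F}$ is guaranteed because $\mathcal{F}$ is a translation cone. Neither route is materially shorter; yours has the mild advantage of exhibiting $(c_2)\Rightarrow(c_1)$ as an immediate specialization, while the paper's order makes $(c_1)\Rightarrow(c_2)$ the only step requiring a contradiction argument.
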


\begin{proof} Let us show the above properties separately:  \smallskip\newline
$\mathrm(a).$ (\textit{sufficiency}) Reasoning by absurd, assume $T$ verifies the one-step property but it is not monotone. Then, there exist $f,g\in\mathcal{F}$ and $x\in X$  with $(f(x)-f(\cdot))_+\geq (g(x)-g(\cdot))_+$ but with $T[f](x)<T[g](x)$. By the one-step descent property \eqref{sec03:eq:one-stepDescent}, there exists $z\in [g< g(x)]$ such that
\[
g(x) - g(z) > f(x) - f(z).
\]
However, the inequality $(f(x)-f(\cdot))_+\geq (g(x)-g(\cdot))_+$ yields that  $[g<g(x)] \subset [f\leq f(x)]$, and so the above inequality is a direct contradiction. \smallskip

(\textit{necessity}) Assume that $T$ is monotone but the one-step descent property does not hold. Then, there exist $f,g\in\mathcal{F}$ and $x\in X$  with $T[f](x) > T[g](x)$ such that for all $z\in X$ we either have $f(x)\leq f(z)$ or $f(x)-f(z)\leq g(x)-g(z)$.  It is not hard to see that for every $z\in X$ one has that
\[
(f(x)-f(z))_+ = \begin{cases}
	\, f(x)-f(z) \leq g(x)-g(z)\,,\quad&\text{ if }f(x)>f(z)\medskip \\
	\phantom{david salasios}0\,, &\text{ otherwise.}
\end{cases}
\]
Thus, in any case, we get that $(f(x)-f(z))_+\leq (g(x)-g(z))_+$. Then, monotonicity yields that $T[f](x)\leq T[g](x)$, which is a contradiction. \medskip\newline

$\mathrm(b).$ We show that for every $f\in \mathcal{F}$ and $c\in \mathbb{R}$, we have $T[f] = T[f+c]$. \smallskip\newline
Notice that $\left[ (f(x)+c) - (f(z)+c)\right]_+ \geq  \left[f(x)-f(z)\right]_+$ holds trivially for all $x,z\in X$. By monotonicity we deduce that $T[f]\leq T[f+c]$. Replacing now $f$ by $f' = f+c$ and respectively, $c$ by $c' = -c$, we obtain equality. \smallskip\newline

$\mathrm(c).$ Let us show first that $(c_1)\Rightarrow (c_2)$:\smallskip\newline Reasoning by absurd, assume that there exist $f,g\in\mathcal{F}$ and $\delta>0$ satisfying the hypotheses of the statement and $x\in X$ with $0 < T[f](x)\leq T[g](x)< +\infty$. Then for all $z\in X$ it holds:
\[
(f(x) - f(z))_+\geq ((1+\delta)g(x) - (1+\delta)g(z))_+\,. 
\]
By monotonicity, we deduce that $T[(1+\delta)g] \leq T[f]$. Further, using scalar-monotonicity, we get
\[
0 < T[g](x)< +\infty\, \implies \, T[g](x) < T[(1+\delta) g](x) \leq T[f](x) \leq T[g](x),
\]
which is a contradiction.\smallskip\newline
Let us now show that $(c_2)\Rightarrow (c_3)$: \smallskip\newline
Let $f\in \mathcal{F}$, $x\in X$ and $r>1$ such that $0<T[f](x), T[rf](x)<+\infty$. Fix $\delta_1,\delta_2 \in [0,r-1]$ with $\delta_1<\delta_2$. We set $ g = (1+\delta_1)f$ and $h = (1+\delta_2)f$. Monotonicity yields that $0\leq T[g](x)$ and $T[h](x)<+\infty$. Then, setting $$\delta = \frac{1+\delta_2}{1+\delta_1} - 1$$ we have that for all $y\in X$, $[g\leq g(y)] = [h\leq h(y)]$,  and for all $y,z\in X$:
	\[
	h(y) - h(z) = (1+\delta)(g(y) - g(z)).
	\]
Thus, (c2) yields that $T[h](x)>T[g](x)$. \smallskip\newline

Let us finally establish that $(c_3)\Rightarrow (c_1)$. \smallskip\newline
To this end,  let $f\in \mathcal{F}$, $r>1$ and $x\in X$ such that $0<T[f](x)<+\infty$. We need to show that $T[f](x) < T[rf](x)$.
This holds trivially if $T[rf](x) = +\infty$, therefore we can assume that $T[rf](x) <+\infty$. Since $T$ is monotone, we have that
$T[rf](x) \geq T[f](x)$ already, and in particular, $T[rf](x)>0$. Then, by hypothesis, we have that the mapping $$[0,r-1]\ni \delta \mapsto T[(1+\delta)f](x)$$ is strictly increasing, which leads us directly to the desired inequality.
\end{proof}

%%%%%%%%%%%%%%%%%%%%%%%%%%%%%%%%%%

\subsection{Determination in topological spaces}\label{sec03-02:GeneralDetermination}

%%%%%%%%%%%%%%%%%%%%%%%%%%%%%%%%%%

Let $(X,\tau )$ be a topological space and let $T$ be a descent modulus for $\mathcal{K}(X)$. Let us define the following
equivalent relation on the class $\mathcal{K}(X)$ of continuous coercive
functions: we say that the functions $f,g\in \mathcal{K}(X)$ are equivalent (and we denote $f\thicksim g$) if they have the same $T$-critical set and they are equal there.\\
In other words:
\begin{equation*}
f\thicksim g\qquad \Longleftrightarrow \qquad \mathcal{Z}_T(f)=\mathcal{Z}_T(g)=S\quad \text{and}\quad f|_{S}=g|_{S}.
\end{equation*}
In this section, borrowing from techniques developed in \cite{DS2022}, we
show that properties (D1)--(D3) of the descent modulus (\textit{cf.}
Definition~\ref{sec03:def:ModulusOfDescent}) are sufficient to guarantee
that the mapping $f\mapsto T[f]$ is injective on $\mathcal{K}(X)$, modulo
the above equivalent relation. Therefore, according to our
terminology, the descent modulus \textit{determines} the class $\mathcal{K}
(X)$. At this stage, let us also outline the topological nature of this result: no linear or
metric structure is required. \smallskip \newline
The results of this section will be stated in a slightly more general
framework. We assume, similarly to the previous section, that $\mathcal{F}\subset\mathcal{K}(X)$ 
%is a cone containing all constant functions and being closed by translations.
is a translation cone.\smallskip \newline
We start with the following lemma.

\begin{lemma}[strict domination of descent modulus]
\label{sec03:lemma:strictComparison} Let $T$ be a descent modulus for the
class~$\mathcal{F}$. Let $f,g\in \dom(T)$
such that 
\begin{equation*}
\forall x\in X\setminus \mathcal{Z}_T(f),\quad T[f](x) > T[g](x).
\end{equation*}
Then, for all $x\in X$, we have that 
\begin{equation*}
f(x) \geq g(x) + \mu(x),
\end{equation*}
where 
\begin{equation*}
 \mu(x) := \inf\{(f-g)(z)\ :\ z\in [f\leq f(x)]\cap \mathcal{Z}_T(f)\}\in 
\mathbb{R}\cup \{-\infty\}.
\end{equation*}
\end{lemma}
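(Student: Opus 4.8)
The plan is to argue by contradiction and to build, following the transfinite scheme of \cite{DS2022}, a strictly descending (possibly transfinite) sequence whose existence is ruled out by coercivity. Introduce the \emph{bad set}
\[
A := \{x \in X : f(x) < g(x) + \mu(x)\} = \{x \in X : (f-g)(x) < \mu(x)\},
\]
so that the conclusion of the lemma is precisely the statement $A = \emptyset$. Before starting the recursion I would record three facts. First, the inequality holds automatically on $\mathcal{Z}_T(f)$: if $x \in \mathcal{Z}_T(f)$ then $x \in [f\le f(x)]\cap\mathcal{Z}_T(f)$, hence $\mu(x) \le (f-g)(x)$ and $x \notin A$; thus $A \subseteq X \setminus \mathcal{Z}_T(f)$, which is exactly where the hypothesis $T[f] > T[g]$ is available. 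Second, $\mu$ is real-valued or $-\infty$ (never $+\infty$): by coercivity $f$ attains its global minimum at some $x^{\ast}$, and by (D1) $x^{\ast} \in \mathcal{Z}_T(f)$, while $f(x^{\ast}) = \min f \le f(x)$ gives $x^{\ast}\in[f\le f(x)]$; hence $[f\le f(x)]\cap\mathcal{Z}_T(f)\neq\emptyset$ and $\mu(x) < +\infty$, so the statement is well posed. Third, $\mu$ is monotone: $f(x')\le f(x)\Rightarrow\mu(x')\ge\mu(x)$, since $[f\le f(x')]\subseteq[f\le f(x)]$.

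Assume now $A\neq\emptyset$ and fix $x_0\in A$. Replacing $x_0$ by the point $x_1$ furnished by one successor step below if necessary (which again lies in $A$ by the computation in the next paragraph and satisfies $f(x_1)<f(x_0)\le\sup f$), I may assume $f(x_0)<\sup f$, so that $K:=[f\le f(x_0)]$ is compact and every point produced lies in $K$. By transfinite recursion I would construct a family $(x_\alpha)_\alpha\subseteq A\cap K$ along which both $\alpha\mapsto f(x_\alpha)$ and $\alpha\mapsto(f-g)(x_\alpha)$ are strictly decreasing. At a successor step, $x_\alpha\in A$ forces $T[f](x_\alpha)>T[g](x_\alpha)$, so the one-step descent property of Proposition~\ref{sec03:prop:EquivalentProperties}(a) (valid since $T$ is monotone) yields $x_{\alpha+1}\in[f<f(x_\alpha)]$ with $f(x_\alpha)-f(x_{\alpha+1})>g(x_\alpha)-g(x_{\alpha+1})$, equivalently $f(x_{\alpha+1})<f(x_\alpha)$ and $(f-g)(x_{\alpha+1})<(f-g)(x_\alpha)$. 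At a limit ordinal $\lambda$, the net $(x_\alpha)_{\alpha<\lambda}$ lies in the compact set $K$ and hence has a convergent subnet; I let $x_\lambda$ be its limit. Since a decreasing net of reals converges to its infimum and $f,g$ are continuous, this gives $f(x_\lambda)=\inf_{\alpha<\lambda}f(x_\alpha)$ and $(f-g)(x_\lambda)=\inf_{\alpha<\lambda}(f-g)(x_\alpha)$, which preserves strict monotonicity.

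The heart of the argument is that the recursion never escapes $A$, so it can always be continued. Indeed, for every $\alpha$ the monotonicity just established gives $f(x_\alpha)\le f(x_0)$ and $(f-g)(x_\alpha)\le(f-g)(x_0)$; combining $\mu(x_\alpha)\ge\mu(x_0)$ (from $f(x_\alpha)\le f(x_0)$) with $x_0\in A$ yields
\[
(f-g)(x_\alpha)\le(f-g)(x_0)<\mu(x_0)\le\mu(x_\alpha),
\]
so $x_\alpha\in A$. Therefore the recursion defines $x_\alpha$ for \emph{every} ordinal $\alpha$ and produces a strictly decreasing map from the ordinals into $\mathbb{R}$ (bounded below, as $f$ is bounded below on the compact set $K$). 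No such map exists --- already its restriction to $\omega_1$ would inject $\omega_1$ into $\mathbb{Q}$ --- a contradiction. Hence $A=\emptyset$, which is the claim $f\ge g+\mu$ on $X$.

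The step I expect to be most delicate is the limit-ordinal passage: because $X$ is merely a topological space, convergent \emph{subsequences} need not exist and one must work with subnets inside the compact set $K$; I then have to verify that the subnet limit inherits both the strict descent (through the convergence of decreasing nets to their infimum, together with continuity of $f$ and $g$) and membership in $A$. The only other points requiring care are preliminary bookkeeping: establishing that $\mu$ is finite through coercivity and (D1) so that $(f-g)(x_0)<\mu(x_0)$ is meaningful and stable under descent, and arranging $f(x_0)<\sup f$ so that the ambient sublevel set is compact.
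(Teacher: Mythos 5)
Your proof is correct and follows essentially the same route as the paper's: argue by contradiction, iterate the one-step descent property of Proposition~\ref{sec03:prop:EquivalentProperties}(a) through a transfinite recursion kept inside a compact sublevel set, pass to limits at limit ordinals via compactness and continuity, and conclude by a cardinality obstruction. The only (harmless) cosmetic differences are that you phrase the invariant as remaining in the bad set $A$ rather than as never meeting a $T$-critical point, and you derive the final contradiction from the impossibility of a strictly decreasing ordinal-indexed family of reals rather than from a forced repetition $z_\alpha=z_\beta$.
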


\begin{proof}
Since the set of global minimizers of every function $f\in\mathcal{F}$ is nonempty and the abstract descent modulus $T$ preserves global minima, we deduce that $\mathcal{Z}_T(f)\neq \emptyset$ and consequently, $ \mu(x) <+\infty$. Let us assume, towards a contradiction, that there exists $x\in X$ such that $f(x)<g(x) + \mu(x)$. Then, clearly $ \mu(x)>-\infty$ which readily yields $x\in X\setminus \mathcal{Z}_T(f)$. %Otherwise, we would have that $(f-g)(x) <  \mu(x) \leq (f-g)(x)$ which is a contradiction.
Therefore, by assumption, $T[f](x) > T[g](x)$. Applying the one-step descent property~\eqref{sec03:eq:one-stepDescent} of~$T$, we infer that there exists $z_0\in X$ such that
\[
f(z_0)<f(x)\quad\text{and}\quad(g-f)(z_0) = c > (g-f)(x)> - \mu(x).
\]
Since $z_0$ is not a $T$-critical point, we can repeat the above argument to obtain $z_1\notin\mathcal{Z}_{T}$ such that  
$f(z_1)<f(z_0)$ and $(g-f)(z_1) > c = (g-f)(z_0)$. Following the strategy of \cite[Proposition~2.2]{DS2022}, we construct (by means of a transfinite induction over the ordinals) a generalized sequence $\{z_{\alpha}\}_{\alpha}\subset [f\leq f(z_0)]$ such that $\{f(z_{\alpha})\}_a$ is decreasing  and $\{(g-f)(z_\alpha)\}_{\alpha}$ is increasing: \smallskip\newline
-- If $\alpha = \beta+1$ is a successor ordinal then, since $z_{\beta}\notin \mathcal{Z}_T(f)$ and $g(z_{\beta}) \geq f(z_{\beta}) +c$, the one-step descent property~\eqref{sec03:eq:one-stepDescent} yields $z_{\beta+1}$ such that
    \[
    f(z_{\beta +1})< f(z_{\beta}) \leq f(z_0)\quad\text{ and }\quad(g-f)(z_{\beta+1})> (g-f)(z_\beta)\geq c.
    \]
-- If $\alpha$ is a limite-ordinal and $\{z_{\beta}\}_{\beta <\alpha}\subset [f\leq f(z_0)]$ is defined accordingly, then since the sublevel set $[f\leq f(z_0)]$ is compact, the $\omega$-limit set
    \[
    A = \bigcap_{\beta < \alpha} \overline{\{ z_{\eta}\ :\ \beta\leq\eta < \alpha \}},
    \]
is nonempty. Pick any $z_{\alpha}\in A$. Clearly, $z_{\alpha}\in [f\leq f(z_0)]$, $f(z_{\alpha})\leq f(z_{\beta})$ for all $\beta\leq \alpha$, and by continuity
\[
    (g-f)(z_{\beta}) = \inf \{ (g-f)(z_{\eta})\ :\ \beta\leq\eta < \alpha \} \leq (g-f)(z_{\alpha}).
\]
Notice that the above construction never meets a $T$-critical point of $f$. Indeed, if $z_{\alpha}\in \mathcal{Z}_T(f)$ for some ordinal $\alpha$, then since $f(z_{\alpha})<f(x)$ we would have that
\begin{align*}
   -\mu(x) &= \sup\{ (g-f)(z)\ :\ z\in [f\leq f(x)]\cap \mathcal{Z}_T(f) \}\\
   &\geq (g-f)(z_{\alpha}) \geq c > -\mu(x), 
\end{align*}
which is a contradiction. Due to a cardinality obstruction, we necessarily deduce that $z_{\alpha}=z_{\beta}$ for some ordinals $\alpha$, $\beta$ with $\alpha>\beta$. This yields
\[
(g-f)(z_{\beta+1})>(g-f)(z_{\beta})= (g-f)(z_{\alpha})\geq (g-f)(z_{\beta+1}), 
\]
which is clearly a contradiction. The conclusion follows.
\end{proof}

%%%%%%%%%%%%%%%%%%%%%%%%

\begin{theorem}[Comparison principle]
\label{sec03:thm:ComparisonPrinciple} Let $T$ be a descent modulus for $\mathcal{F}$ and let $f,g\in \dom(T)$ and $c\in \mathbb{R}$ such that

\begin{itemize}
\item[\textrm{(i).}] $T[f](x) \geq T[g](x)$, for all $x\in X$; and

\item[\textrm{(ii).}] $f(\bar x)\geq g(\bar x)+c$, for all $\bar x\in 
\mathcal{Z}_T(f)$.
\end{itemize}

Then, $f\geq g+c$.
\end{theorem}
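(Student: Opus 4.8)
The plan is to deduce this (weak) comparison principle from the strict domination lemma (Lemma~\ref{sec03:lemma:strictComparison}) by a scaling perturbation. The obstacle is precisely that Lemma~\ref{sec03:lemma:strictComparison} requires the \emph{strict} inequality $T[f]>T[g]$ off the critical set, whereas hypothesis (i) only provides the weak inequality $T[f]\ge T[g]$. The whole proof hinges on choosing a perturbation that upgrades (i) to a strict inequality without spoiling the rest of the structure.

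First I would normalize the data. Using translation-invariance (Proposition~\ref{sec03:prop:EquivalentProperties}(b)), I replace $g$ by $g+c$, so that $T[g+c]=T[g]$ and both the hypotheses and the desired conclusion reduce to the case $c=0$; it thus suffices to prove $f\ge g$ assuming (i) and assuming $f\ge g$ on $\mathcal{Z}_T(f)$. Next, since $f,g\in\mathcal{K}(X)$ are continuous and coercive, they are bounded below and attain their infima (a coercive continuous function has nonempty compact sublevel sets on which it attains a minimum that is necessarily global). Translating both $f$ and $g$ by the common constant $-\inf g$ — which alters neither $T[f],T[g]$, nor $\mathcal{Z}_T(f)$, nor any of the inequalities involved — I may further assume $g\ge 0$.

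The key step is the perturbation. For $s\in(0,1)$ I apply Lemma~\ref{sec03:lemma:strictComparison} to the pair $(f,sg)$. Scaling $g$ \emph{down} (rather than scaling $f$ \emph{up}) is advantageous for two reasons: monotonicity (D2) gives $T[sg]\le T[g]<+\infty$, so $sg\in\dom(T)$, and the critical set appearing in the lemma remains the fixed set $\mathcal{Z}_T(f)$. To verify the strict hypothesis, fix $x\notin\mathcal{Z}_T(f)$, so that $0<T[f](x)<+\infty$, and split into two cases: if $x\in\mathcal{Z}_T(sg)$ then trivially $T[sg](x)=0<T[f](x)$; otherwise $0<T[sg](x)\le T[g](x)\le T[f](x)$, and scalar-monotonicity (D3) applied to $sg$ with the factor $1/s>1$ yields $T[sg](x)<T[(1/s)(sg)](x)=T[g](x)\le T[f](x)$. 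In either case $T[f](x)>T[sg](x)$, so Lemma~\ref{sec03:lemma:strictComparison} gives, for every $x\in X$,
\[
f(x)\ \ge\ s\,g(x)+\nu_s(x),\qquad \nu_s(x):=\inf\{(f-sg)(z):z\in[f\le f(x)]\cap\mathcal{Z}_T(f)\}.
\]

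It remains to control the boundary term and pass to the limit. For $z\in\mathcal{Z}_T(f)$, hypothesis (ii) (in the reduced setting $c=0$) gives $f(z)\ge g(z)$, whence $(f-sg)(z)=(f-g)(z)+(1-s)g(z)\ge 0$, since $(f-g)(z)\ge 0$ and $(1-s)g(z)\ge 0$ (this is exactly where the normalizations $g\ge 0$ and $s\le 1$ enter). As the set $[f\le f(x)]\cap\mathcal{Z}_T(f)$ is nonempty — it contains a global minimizer of $f$, which lies in $\mathcal{Z}_T(f)$ by (D1) — we obtain $\nu_s(x)\ge 0$, and therefore $f(x)\ge s\,g(x)$ for all $s\in(0,1)$ and all $x\in X$. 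Letting $s\to 1^-$ gives $f\ge g$, which is the claim in the reduced setting, and undoing the two normalizations yields $f\ge g+c$. The single delicate point, as indicated, is the passage from the weak to the strict inequality; downward scaling of $g$ is what simultaneously secures $sg\in\dom(T)$, preserves the critical set $\mathcal{Z}_T(f)$, and forces $\nu_s\ge 0$, making the limit $s\to 1^-$ harmless.
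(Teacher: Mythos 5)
Your proof is correct and follows essentially the same strategy as the paper: reduce to Lemma~\ref{sec03:lemma:strictComparison} by a multiplicative perturbation that uses scalar-monotonicity (D3) to upgrade the weak inequality (i) to the strict one required by the lemma, and then pass to the limit. The only difference is cosmetic: the paper scales $f$ up to $(1+\varepsilon)f$ and bounds the boundary term directly below by $c+\varepsilon\min f$, whereas you scale $g$ down to $sg$ and instead normalize ($c=0$, $g\geq 0$) so that the boundary term is nonnegative; both devices work.
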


\begin{proof}
Let $x \in X\setminus\mathcal{Z}_T(f)$ be arbitrarily chosen. Fix $\varepsilon>0$, set $f_{\varepsilon
} = (1+\varepsilon)f$ and notice that monotonicity of $T$ yields that $\mathcal{Z}_T(f_{\varepsilon})\subset \mathcal{Z}_T(f)$. Let $z\in X\setminus \mathcal{Z}_T(f_{\varepsilon})$. We have two cases:
\begin{itemize}
    \item[Case 1:] $z\in X\setminus\mathcal{Z}_T(f)$. Then scalar-monotonicity of $T$ yields
\[
T[f_{\varepsilon}](z) = T[(1+\varepsilon)f](z) > T[f](z) \geq T[g](z).
\]
\item[Case 2:] $z\in \mathcal{Z}_T(f)\setminus\mathcal{Z}_T(f_{\varepsilon})$. Then $T[f_{\varepsilon}](z)>0 = T[f](z)\geq T[g](z)$.
\end{itemize}
In both cases, $T[f_{\varepsilon}](z)> T[g](z)$. Thus, by Lemma~\ref{sec03:lemma:strictComparison}, we have that
\begin{align*}
    f_{\varepsilon}(x) &\geq  g(x) + \inf\{ (f_{\varepsilon}-g)(z)\ :\ z\in \mathcal{Z}_T(f_{\varepsilon})\cap [f_{\varepsilon}\leq f_{\varepsilon}(x)] \}\\
    &\geq g(x) + \inf\{ (f_{\varepsilon}-g)(z)\ :\ z\in \mathcal{Z}_T(f)\cap [f\leq f(x)] \}\\
    &\geq g(x) + c + \varepsilon \inf\{ f(z) :\ z\in \mathcal{Z}_T(f)\cap [f\leq f(x)]\}\\
    &\geq g(x) + c + \varepsilon\min f.
\end{align*}
Finally, by taking $\varepsilon\to 0$, we deduce that $f(x)\geq g(x) + c$. 
 The proof is complete.
\end{proof}

Applying twice Theorem~\ref{sec03:thm:ComparisonPrinciple}, we deduce easily
the following determination result.

\begin{theorem}[Determination of continuous coercive functions]
\label{sec03:thm:Determination} Let $T$ be a descent modulus for a translation
cone $\mathcal{F}$ of $\mathcal{K}(X)$. Let $f,g\in \dom(T)$ and $c\in 
\mathbb{R}$ be such that

\begin{itemize}
\item[(i).] $T[f](x) = T[g](x)$ for all $x\in X$ (whence $\mathcal{Z}_T(f)=
\mathcal{Z}_T(g)$); and

\item[(ii).] $f(x) = g(x) + c$, for all $x\in \mathcal{Z}_T(f)$.
\end{itemize}

Then, $f = g + c$.
\end{theorem}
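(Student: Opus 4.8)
The plan is to derive the equality $f = g + c$ from the two inequalities $f \geq g + c$ and $f \leq g + c$, each obtained by a single application of the Comparison Principle (Theorem~\ref{sec03:thm:ComparisonPrinciple}), the second one with the roles of $f$ and $g$ interchanged. Since the only hypotheses of that principle are a pointwise inequality between descent moduli and a one-sided inequality on the critical set, and since the present assumptions supply \emph{equalities} of both quantities, both sets of hypotheses will be available in either direction. No further machinery is needed.

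For the first inequality I apply Theorem~\ref{sec03:thm:ComparisonPrinciple} to the pair $(f,g)$ and the constant $c$. Hypothesis (i) of that theorem, namely $T[f](x)\geq T[g](x)$ for all $x\in X$, is immediate from assumption (i) here, which gives $T[f]=T[g]$. Hypothesis (ii), namely $f(\bar x)\geq g(\bar x)+c$ for all $\bar x\in\mathcal{Z}_T(f)$, follows from assumption (ii), which in fact yields the equality $f(\bar x)=g(\bar x)+c$ on $\mathcal{Z}_T(f)$. The Comparison Principle then gives $f\geq g+c$.

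For the reverse inequality I apply the same theorem to the pair $(g,f)$ and the constant $-c$. The descent-modulus hypothesis reads $T[g](x)\geq T[f](x)$ for all $x\in X$, again immediate from $T[f]=T[g]$. For the critical-set hypothesis I must check that $g(\bar x)\geq f(\bar x)-c$ for all $\bar x\in\mathcal{Z}_T(g)$; the point to notice here is that $\mathcal{Z}_T(f)=\mathcal{Z}_T(g)$ (as recorded in the statement, since $T[f]=T[g]$), so that every $\bar x\in\mathcal{Z}_T(g)$ is also a $T$-critical point of $f$, and assumption (ii) rewrites as $g(\bar x)=f(\bar x)-c$. The Comparison Principle now yields $g\geq f-c$, that is, $f\leq g+c$. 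Combining the two inequalities gives $f=g+c$.

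The argument is essentially bookkeeping: all of the genuine difficulty is already absorbed into Theorem~\ref{sec03:thm:ComparisonPrinciple} and, beneath it, the transfinite construction of Lemma~\ref{sec03:lemma:strictComparison}. The only mild care required is to track the sign of the constant and to use the identification $\mathcal{Z}_T(f)=\mathcal{Z}_T(g)$ when swapping the two functions, so that assumption (ii) can be read off in precisely the one-sided form demanded by each of the two applications.
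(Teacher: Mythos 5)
Your proof is correct and follows exactly the route the paper intends: the paper states that the determination result is obtained "by applying twice Theorem~\ref{sec03:thm:ComparisonPrinciple}", which is precisely your two-sided application with the roles of $f$ and $g$ (and the sign of $c$) swapped, using $\mathcal{Z}_T(f)=\mathcal{Z}_T(g)$ to transfer hypothesis (ii). Nothing further is needed.
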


\begin{remark} A descent modulus $T$ for a class $\mathcal{F}$ is meant to assign a quantified measure of descent at every point of $f\in\mathcal{F}$. This quantity is also allowed to be infinite at some points of some functions and whenever this happens the determination result cannot apply. Therefore, $T$ does not determine the whole class $\mathcal{F}$, but instead only functions in $\dom(T)\subset \mathcal{F}$.
\end{remark}

%%%%%%%%%%%%

\subsection{Stability properties of descent moduli and examples\label{sec03-03:Stability}}

The metric slope (used in \cite{AGS2008}, \cite{GMT1980} \textit{e.g.}) is a
natural instance of abstract descent modulus and the results of the previous
section can be seen as a minimal axiomatic presentation of the slope determination
result given in \cite{DS2022}. In this section, we show that the axiomatic
descent modules also captures the notion of global slope (used in \cite{TZ2022}) as well as several natural adaptations of the notion of slope to
topological spaces, emancipating from the metric framework. \smallskip
\newline
Throughout this section, $\mathcal{F}$ will denote a translation cone of $\mathcal{C}(X)$. 
%containing at least the constant functions.

\begin{proposition}[$m$-slope]
\label{prop-m-slope} Let $m:X\times X\to \mathbb{R}_+$ be a mapping
satisfying: 
\begin{equation*}
m(x,y) = 0 \quad\iff \quad x= y \qquad\text{(separation axiom)}.
\end{equation*}
Let further $\mathcal{D} = \{ \mathcal{D}_x \}_{ x\in X}$ be a family of subsets of $X$
satisfying $x\in \mathcal{D}_x$ for every $x\in X$. Then, the $m$-slope 
\begin{equation*}
s_f(x) := \left\{
\begin{array}{cl}
\displaystyle\limsup_{y\to x} \Delta_f^+(x,y) & \text{ if }x\text{ is not
isolated,} \\ 
0 & \text{ otherwise,}
\end{array}
\right.
\end{equation*}
and the semiglobal $(\mathcal{D},m)$-slope 
\begin{equation}\label{eq:D-global}
\mathscr{G}_{\mathcal{D}}[f](x) = \sup_{y\in \mathcal{D}_x} \Delta_f^+(x,y)
\end{equation}
are moduli of descent for the class $\mathcal{F}$, where 
\begin{equation}
\Delta_f^+(x,y) = \left\{
\begin{array}{cc}
\frac{(f(x) - f(y))_+}{m(x,y)} & \text{ if }y\neq x, \\ 
&  \\ 
0 & \text{ if }y=x.
\end{array}
\right.
\end{equation}
\end{proposition}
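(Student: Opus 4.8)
The plan is to verify the three defining axioms (D1)--(D3) of Definition~\ref{sec03:def:ModulusOfDescent} for both operators $s_f$ and $\mathscr{G}_{\mathcal{D}}[f]$ simultaneously, by isolating two elementary properties of the descent quotient $\Delta_f^+(x,y)$ and then observing that both $\limsup_{y\to x}$ and $\sup_{y\in\mathcal{D}_x}$ are order-preserving and commute with multiplication by a nonnegative scalar. The two properties are: (i) \emph{monotonicity in the preorder}: if $(f(x)-f(z))_+\geq (g(x)-g(z))_+$ for all $z\in X$, then $\Delta_f^+(x,y)\geq \Delta_g^+(x,y)$ for every $y\in X$; and (ii) \emph{positive homogeneity}: $\Delta_{rf}^+(x,y)=r\,\Delta_f^+(x,y)$ for every $r\geq 0$ and every $y$. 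Property (i) is immediate for $y=x$ (both sides vanish) and for $y\neq x$ follows by dividing the numerator inequality by the common denominator $m(x,y)$, which is strictly positive thanks to the separation axiom. Property (ii) follows from $(rf(x)-rf(y))_+=r(f(x)-f(y))_+$ since $r\geq 0$.

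For (D1), if $x\in\argmin f$ then $(f(x)-f(z))_+=0$ for all $z$, hence $\Delta_f^+(x,\cdot)\equiv 0$. Taking the $\limsup$, respectively the supremum, gives $s_f(x)=0$ and $\mathscr{G}_{\mathcal{D}}[f](x)=0$; for the $m$-slope the case of $x$ isolated is covered directly by the definition. For (D2), starting from $(f(x)-f(z))_+\geq (g(x)-g(z))_+$ for all $z$, property (i) yields $\Delta_f^+(x,y)\geq\Delta_g^+(x,y)$ for every $y$, and applying the monotone operations $\limsup_{y\to x}$ and $\sup_{y\in\mathcal{D}_x}$ preserves the inequality, giving $s_f(x)\geq s_g(x)$ and $\mathscr{G}_{\mathcal{D}}[f](x)\geq \mathscr{G}_{\mathcal{D}}[g](x)$.

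For (D3), fix $r>1$ and $x$ with $0<T[f](x)<+\infty$, where $T$ denotes either operator. By property (ii), $\Delta_{rf}^+(x,y)=r\,\Delta_f^+(x,y)$, and since $r$ is a positive constant it factors out of both $\limsup_{y\to x}$ and $\sup_{y\in\mathcal{D}_x}$; hence $s_{rf}(x)=r\,s_f(x)$ and $\mathscr{G}_{\mathcal{D}}[rf](x)=r\,\mathscr{G}_{\mathcal{D}}[f](x)$. Because $0<T[f](x)<+\infty$ and $r>1$, this yields the strict inequality $T[rf](x)=r\,T[f](x)>T[f](x)$. The only point requiring mild care is the bookkeeping at isolated points for the $m$-slope: there $s_f(x)=0$ by convention, so the hypothesis $0<s_f(x)$ of (D3) is vacuous, and the same convention keeps (D1) and (D2) consistent. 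I do not expect any genuine obstacle here, since the argument reduces entirely to the two displayed properties of $\Delta_f^+$ together with the order-preservation and positive homogeneity of $\limsup$ and $\sup$.
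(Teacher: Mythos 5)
Your proof is correct and follows essentially the same route as the paper's: homogeneity of $\Delta_f^+$ gives (D3), the pointwise inequality $\Delta_f^+(x,\cdot)\geq\Delta_g^+(x,\cdot)$ passed through the order-preserving operations $\limsup$ and $\sup$ gives (D2), and (D1) is immediate since $\Delta_f^+(x,\cdot)\equiv 0$ at a global minimizer. Your explicit handling of the isolated-point convention is a small additional care the paper leaves implicit, but the argument is the same.
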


\begin{proof}
Let us show that the above operators of (local) $m$-slope and (semiglobal) $(\mathcal{D},m)$-slope satisfy axioms (D1)--(D3) of Definition~\ref{sec03:def:ModulusOfDescent}. It is straightforward to see that (D1) (preservation of global minima) is fulfilled. Axiom (D3) (scalar monotonicity) is also fulfilled, since for every$f\in \mathcal{F}$ and $r>0$ we have 
\[
s_{rf}(x) = r s_f(x)\quad\text{ and }\quad \mathscr{G}_{\mathcal{D}}[rf](x) = r\mathscr{G}_{\mathcal{D}}[f](x).
\]
It remains to show that both operators also satisfy axiom (D2) (Monotonicity). To this end, let $f,g\in \mathcal{F}$ such that
\[
(f(x) - f(z))_+ \geq (g(x) - g(z))_+.
\]
Then for every $y\in X$ we have $\Delta_f^+(x,y)\geq \Delta_g^+(x,y)$, which readily yields that $s_f(x) \geq s_g(x)$ and $\mathscr{G}_{\mathcal{D}}[f](x)\geq \mathscr{G}_{\mathcal{D}}[g](x)$. The proof is complete.
\end{proof}

\begin{remark}
When $(X,\tau)$ is a metric space and $m$ is the distance function, the $m$-slope $s_f(x)$ coincides with the usual metric slope $|\nabla f |(x)$ and
the main result of~\cite{DS2022} follows directly from Theorem~\ref{sec03:thm:Determination}. Taking now $\mathcal{D}_x=X$ for all $x\in X$, the semiglobal $(\mathcal{D},m)$-slope $\mathscr{G}_{\mathcal{D}}[f](x)$ coincides with the global slope $\globalSlope[f](x)$ (see, e.g., \cite[Definition~1.2.4]{AGS2008}) which was used in~\cite{TZ2022}.
\end{remark}

Notice that the semiglobal slope $\mathscr{G}_{\mathcal{D}}[f]$ is intrinsically different from the
metric slope (or the norm of the gradient $\|\nabla f\|$ in the
differentiable case), which already reveals that Definition~\ref{sec03:def:ModulusOfDescent} represents a much more general setting. The
next proposition shows that we can go even further.

\begin{proposition}[Constructing descent moduli]
\label{sec03:prop:StabilityModuli} $\mathrm{(i).}$ Let $T_1$, $T_2$ be
descent moduli for the class~$\mathcal{F}$. Then $T_1+ T_2$ is also a
descent modulus for $\mathcal{F}$, where 
\begin{equation*}
(T_1+ T_2)[f](x):= T_1[f](x) + T_2[f](x), \quad \text{for all }\, f\in 
\mathcal{F} \,\, \text{and }\, x\in \dom{f}.
\end{equation*}
$\mathrm{(ii)}.$ Let $T$ be a descent modulus for $\mathcal{F}$ and let $\phi:\mathbb{R}_+\to\mathbb{R}_+$ be a strictly increasing function with $\phi(0) = 0$ and $\limsup_{t\to+\infty}\phi(t) = +\infty$. Then 
\begin{equation*}
(\phi T)[f](x):=(\phi\circ T[f])(x), \quad \text{for all }\, f\in \mathcal{F}
\,\, \text{and }\, x\in \dom{f},
\end{equation*}
is also a descent modulus for $\mathcal{F}$, under the convention $\phi(+\infty) = \limsup_{t\to+\infty}\phi(t)=+\infty$. \smallskip\newline
In particular, $r T$, $r\geq 0$ is a descent modulus for $\mathcal{F}$,
where 
\begin{equation*}
(r T)[f](x):= r\cdot T[f](x),
\end{equation*}
under the convention $r\!\cdot\!
(+\infty) = +\infty$ for $r>0$, and $0\!\cdot\!
(+\infty) = 0$.
\end{proposition}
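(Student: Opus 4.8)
The plan is to verify, for each of the two constructions, that axioms (D1)--(D3) of Definition~\ref{sec03:def:ModulusOfDescent} pass from the data to the constructed operator. For part (i), axioms (D1) and (D2) are immediate. If $x\in\argmin f$ then $T_1[f](x)=T_2[f](x)=0$, so $(T_1+T_2)[f](x)=0$; and if $(f(x)-f(z))_+\geq(g(x)-g(z))_+$ for every $z\in X$, then monotonicity of each $T_i$ gives $T_i[f](x)\geq T_i[g](x)$, and adding these preserves the inequality. The only delicate point is scalar-monotonicity (D3). Here I would fix $f$, $x$ and $r>1$ with $0<(T_1+T_2)[f](x)<+\infty$: finiteness of the sum forces both $T_i[f](x)$ to be finite, while positivity forces at least one, say $T_1[f](x)$, to be strictly positive. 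Since $(rf(x)-rf(z))_+=r(f(x)-f(z))_+\geq(f(x)-f(z))_+$ for all $z$ when $r>1$, monotonicity yields $T_i[rf](x)\geq T_i[f](x)$ for both indices; scalar-monotonicity of $T_1$ (applicable as $0<T_1[f](x)<+\infty$) then upgrades the first to $T_1[rf](x)>T_1[f](x)$, and summing with $T_2[rf](x)\geq T_2[f](x)$ gives the strict inequality $(T_1+T_2)[rf](x)>(T_1+T_2)[f](x)$.

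For part (ii) I would again check the axioms in turn for $\phi T$. Axiom (D1) follows from $\phi(0)=0$, since $x\in\argmin f$ gives $T[f](x)=0$ and hence $(\phi T)[f](x)=0$. Axiom (D2) follows because a strictly increasing $\phi$ is in particular non-decreasing, so $T[f](x)\geq T[g](x)$ (from monotonicity of $T$) implies $\phi(T[f](x))\geq\phi(T[g](x))$, the convention $\phi(+\infty)=+\infty$ covering the infinite case. The crux is (D3). The three assumptions $\phi(0)=0$, strict monotonicity of $\phi$, and $\phi(+\infty)=+\infty$ together ensure that, for $t\in\overline{\mathbb{R}}_+$, one has $\phi(t)>0\iff t>0$ and $\phi(t)<+\infty\iff t<+\infty$; consequently $0<(\phi T)[f](x)<+\infty$ is equivalent to $0<T[f](x)<+\infty$. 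Scalar-monotonicity of $T$ then gives $T[rf](x)>T[f](x)$ for $r>1$, and applying the strictly increasing $\phi$ preserves this strict inequality, yielding $(\phi T)[f](x)<(\phi T)[rf](x)$.

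The \emph{in particular} statement about $rT$ splits into two cases: for $r>0$ it is the instance $\phi(t)=rt$ of part (ii), this $\phi$ being strictly increasing with $\phi(0)=0$ and $\limsup_{t\to+\infty}\phi(t)=+\infty$; for $r=0$ the operator $0\cdot T$ is identically zero (under the convention $0\cdot(+\infty)=0$), so (D1)--(D2) hold trivially and (D3) holds vacuously, its hypothesis $0<(0\cdot T)[f](x)$ never being met. The main obstacle throughout is the careful handling of the extended-real values $0$ and $+\infty$: in part (i) one must distinguish the roles of the two summands (scalar-monotonicity for the strictly positive one, plain monotonicity for the other), and in part (ii) one must invoke the three hypotheses on $\phi$ precisely to transport the condition $0<\,\cdot\,<+\infty$ through $\phi$ in both directions.
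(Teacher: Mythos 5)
Your proof is correct and follows essentially the same route as the paper's: verify (D1)--(D2) directly, and for (D3) single out a summand with $0<T_1[f](x)<+\infty$ (using nonnegativity to get finiteness of both summands) and combine its scalar-monotonicity with plain monotonicity of the other, respectively transport $0<T[f](x)<+\infty$ through $\phi$ using $\phi(0)=0$ and $\phi(+\infty)=+\infty$. Your explicit treatment of the degenerate case $r=0$ for $rT$ is a small completeness bonus the paper leaves implicit.
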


\begin{proof}
Let $T_1$, $T_2$, $T$ and $\phi$ as in the statements (i) and (ii). We show that axioms (D1)--(D3) of Definition~\ref{sec03:def:ModulusOfDescent} are fulfilled:\smallskip\newline
-- (D1) (\textit{Preservation of global minima}) Let $f\in \mathcal{F}$ and $x\in \argmin f$. Then $$T_1[f](x) = T_2[f](x) = T[f](x) = 0$$ and consequently $(T_1 + T_2)[f](x) = 0$\, and \,$\phi(T[f](x)) = \phi(0) = 0.$  Therefore, $T_1+T_2$ and $\phi T$ preserve global minima. \medskip\newline
-- (D2) (\textit{Monotonicity}) Let $f,g\in\mathcal{F}$ and $x\in X$ such that
\[
(f(x) - f(z))_+ \geq (g(x) - g(z))_+,\quad \forall x\in X.
\]
Then, since $T_1$ and $T_2$ are monotone, we have that
\[
(T_1+T_2)[f](x) = T_1[f](x)+T_2[f](x) \geq T_1[g](x) + T_2[g](x) = (T_1+T_2)[g](x).
\]
Similarly, since $T$ is monotone and $\phi$ is non-decreasing, we get that
\[
(\phi T)[f](x) = \phi(T[f](x))\geq \phi(T[g](x)) = (\phi T)[g](x).
\]
Thus, $T_1+T_2$ and $\phi T$ are monotone. \medskip\newline
-- (D3) (\textit{Scalar monotonicity}) Let $f\in \mathcal{F}$, $x\in X$ and $r>1$ and assume $0 < (T_1+T_2)[f](x) < +\infty$. Up to a mutual change of $T_1$ and $T_2$, we many assume $0<T_1[f](x) < +\infty$. Then, using the scalar monotonicity of $T_1$ and the monotonicity of $T_2$, we deduce
    \begin{align*}
    (T_1+T_2)[rf](x) &= T_1[rf](x) + T_2[rf](x) \, > \, T_1[f](x) + T_2[rf](x)\smallskip \\
    &\geq  T_1[f](x) + T_2[f](x) = (T_1+T_2)[f](x).
    \end{align*}
    Thus, $(T_1+T_2)$ is scalar-monotone. \smallskip\newline
 Let us now assume $0 < (\phi T)[f](x) < +\infty$. Since $\phi(0) = 0$ and $\phi(+\infty) = +\infty$, we obtain again $0<T[f](x)<+\infty$. Thus, $T[rf](x)>T[f](x)$ and 
    \[
    (\phi T)[rf](x) = \phi(T[rf](x))>\phi(T[f](x))=(\phi T)[rf](x),
    \]
yielding that $(\phi T)$ is scalar-monotone.
We conclude that both $(T_1+T_2)$ and $(\phi T)$ are descent moduli for $\mathcal{F}$.
\end{proof}

Notice that the family of descent moduli for the class $\mathcal{F}$ has the
structure of a convex cone (i.e., it is a cone closed for the sum), with the sum and the scalar multiplication being
defined as in Proposition~\ref{sec03:prop:StabilityModuli}. \smallskip
\newline
The following proposition provides other types of operations, based on 
\textit{truncations}, that preserve descent moduli.

\begin{proposition}[Truncated descents]
\label{prop:trunc} Let $T$ be a descent modulus for the class $\mathcal{F}
$. Then: \smallskip\newline
(i). For every $\varepsilon>0$, the operator $T_{\varepsilon}$ given by 
\begin{equation*}
T_{\varepsilon}[f](x) = \left\{
\begin{array}{cl}
T[f](x), & \text{ if }f(x) > \inf f + \varepsilon \smallskip \\ 
0, & \text{ otherwise,}
\end{array}
\right.
\end{equation*}
is a descent modulus for $\mathcal{F}$.\smallskip\newline
(ii). For every $K\subset X$, the operator $T\big|_K$ given by 
\begin{equation*}
T\big|_K[f](x) = \left\{
\begin{array}{cl}
T[f](x), & \text{ if }x\in K \smallskip \\ 
0, & \text{ otherwise,}
\end{array}
\right.
\end{equation*}
is a descent modulus for $\mathcal{F}$.
\end{proposition}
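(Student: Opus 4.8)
The plan is to verify axioms (D1)--(D3) of Definition~\ref{sec03:def:ModulusOfDescent} for each truncated operator, exploiting the common structural feature that both $T_{\varepsilon}$ and $T\big|_K$ coincide with $T$ wherever they are nonzero and vanish elsewhere. I would dispatch part (ii) first, since there the verification is a routine case split on whether $x\in K$: at a global minimizer $x$ one has $T[f](x)=0$, so $T\big|_K[f](x)=0$ whether or not $x\in K$, giving (D1); for (D2) at $x$, the case $x\notin K$ makes both sides vanish while the case $x\in K$ reduces to monotonicity of $T$; and for (D3), the hypothesis $0<T\big|_K[f](x)<+\infty$ forces $x\in K$, after which $T\big|_K[f](x)=T[f](x)$ and $T\big|_K[rf](x)=T[rf](x)$, so the strict inequality is inherited from $T$.

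For part (i) the only delicate axiom is monotonicity, and my approach rests on the elementary identity
\[
\sup_{z\in X}\,(f(x)-f(z))_+ \;=\; f(x)-\inf f ,
\]
valid for every $f\in\mathcal{F}$ and $x\in X$ (one inequality is pointwise, since $f(z)\geq\inf f$, and the reverse follows along a minimizing sequence of $f$). The point of this identity is that it re-expresses the defining threshold of $T_{\varepsilon}$, namely $f(x)>\inf f+\varepsilon$, as the condition $\sup_z(f(x)-f(z))_+>\varepsilon$, which is phrased entirely in terms of the preorder $\preceq_x$. I would then argue as follows: if $g\preceq_x f$ and $g(x)>\inf g+\varepsilon$, then taking suprema over $z$ in $(f(x)-f(z))_+\geq(g(x)-g(z))_+$ gives $f(x)-\inf f\geq g(x)-\inf g>\varepsilon$, so $f(x)>\inf f+\varepsilon$ as well; hence whenever $T_{\varepsilon}$ keeps $T[g](x)$ active it also keeps $T[f](x)$ active, and monotonicity of $T$ yields $T_{\varepsilon}[f](x)=T[f](x)\geq T[g](x)=T_{\varepsilon}[g](x)$. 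In the complementary case $T_{\varepsilon}[g](x)=0$ and the inequality is trivial.

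The remaining axioms for $T_{\varepsilon}$ I expect to be straightforward. For (D1), a global minimizer satisfies $f(x)=\inf f<\inf f+\varepsilon$, so the threshold fails and $T_{\varepsilon}[f](x)=0$. For (D3), the hypothesis $0<T_{\varepsilon}[f](x)<+\infty$ forces $f(x)>\inf f+\varepsilon$ with $T_{\varepsilon}[f](x)=T[f](x)$; since $\inf(rf)=r\inf f$ and $r>1$, one has $r(f(x)-\inf f)>f(x)-\inf f>\varepsilon$, that is $(rf)(x)>\inf(rf)+\varepsilon$, so the truncation keeps $T[rf](x)$ active and scalar-monotonicity of $T$ gives $T_{\varepsilon}[f](x)=T[f](x)<T[rf](x)=T_{\varepsilon}[rf](x)$.

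The main obstacle, as I see it, is precisely the monotonicity step for $T_{\varepsilon}$: one must notice that the truncation threshold, although stated in terms of $\inf f$, is equivalent to a statement about $\sup_z(f(x)-f(z))_+$, the very quantity that is monotone under $\preceq_x$. Without this reformulation the apparent mismatch between the thresholds $\inf f+\varepsilon$ and $\inf g+\varepsilon$ for two $\preceq_x$-comparable functions would seem uncontrollable, and the argument would stall.
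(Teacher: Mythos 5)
Your proof is correct and follows essentially the same route as the paper's: part (ii) is the same routine case split, and for part (i) your reformulation of the truncation threshold via $\sup_{z\in X}(f(x)-f(z))_+=f(x)-\inf f$ is just the contrapositive of the paper's observation that $f(x)\le\inf f+\varepsilon$ forces $(f(x)-f(z))_+\le\varepsilon$ for all $z$ and hence $g(x)\le\inf g+\varepsilon$. The only (harmless) omission is the case $\inf f=-\infty$ in the scalar-monotonicity step, where your chain $r(f(x)-\inf f)>f(x)-\inf f>\varepsilon$ degenerates (both sides become $+\infty$); the paper dispatches this case separately by noting that then $T_{\varepsilon}[f]=T[f]$ identically, and the conclusion is anyway trivial since the threshold condition holds for every point of $f$ and of $rf$.
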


\begin{proof}
Let $T$, $\varepsilon>0$ and $K\subset X$ as in the statement of the proposition.  We will show that the operators $T_{\varepsilon}$ and $T\big|_{K}$ satisfy properties (D1)--(D3) of Definition~\ref{sec03:def:ModulusOfDescent}. Notice that for every $f\in \mathcal{F}$ and $x\in X$ we have $T[f](x)\geq T_{\varepsilon}[f](x)$ and $T[f](x) \geq T\big|_K[f](x)$. Therefore, if $T[f](x) = 0$, the above readily yields $T_{\varepsilon}[f](x) =\big|_{K}(x) = 0$, and (D1) holds trivially.\smallskip\newline
Let us now prove (D2). To this end, Let $f,g\in\mathcal{F}$ and $x\in X$ such that
\[
(f(x) - f(z))_+ \geq (g(x) - g(z))_+,\quad \forall x\in X.
\]
Let us first deal with $T_{\varepsilon}$: if $f(x) > \inf f + \varepsilon$, then $T_{\varepsilon}[f](x) = T[f](x) \geq T[g](x) \geq T_{\varepsilon}[g](x)$. On the other hand,  if $f(x) \leq \inf f + \varepsilon$, then  $[f(x) - f(z)]_+\leq \varepsilon$ for all $z\in X$, whence $g(x) \leq \inf g + \varepsilon$ and $T_{\varepsilon}[f](x) = T_{\varepsilon}[g](x) = 0$ (by definition of $T_{\varepsilon}$). We conclude that  $T_{\varepsilon}[f](x) \geq T_{\varepsilon}[g](x)$. \smallskip\newline
Let us now deal with $T\big|_K$: If $x\in K$, then $T\big|_K[f](x) = T[f](x) \geq T[g](x) = T\big|_K[g](x)$, while if $x\in X\setminus K$, then $T\big|_K[f](x) = T\big|_K[g](x) = 0$. In both cases $T\big|_K[f](x) \geq T\big|_K[g](x)$.\medskip\newline
It remains to prove (D3). Let $f\in \mathcal{F}$, $x\in X$ and $r>1$. If $\inf f=-\infty$, then $T_{\varepsilon}[f]=T[f]$ and the result follows. Therefore, we may assume $\inf f>-\infty$ and $0<T_{\varepsilon}[f](x) < + \infty$. This yields $f(x)> \inf f + \varepsilon$ and consequently, $T_{\varepsilon}[f](x) = T[f](x)$. Noting that $$rf(x) > r(\inf f + \varepsilon) > \inf rf + \varepsilon,$$ we conclude that $T_{\varepsilon}[rf](x) = T[rf](x)$, as well. Then, since $T_{\varepsilon}[f](x) = T[f](x)<T[rf](x) = T_{\varepsilon}[rf](x)$, we conclude that $T_{\varepsilon}$ is scalar-monotone.\smallskip\newline
Let us now assume $0<T\big|_K[f](x) < + \infty$. This yields in particular that $x\in K$ and so $T\big|_K[f](x) = T[f](x)$ and $T\big|_K[rf](x) = T[rf](x)$. Then, since $T\big|_K[f](x) = T[f](x)<T[rf](x) = T\big|_K[rf](x)$, we conclude that $T\big|_K$ is scalar-monotone.
\end{proof}

The last stability property that we study is the pointwise limit. In
general, this operation does not preserve moduli of descent, since
scalar-monotonicity can be lost in the limit process, as the following
example reveals.

\begin{example}[Axiom (D3) is not preserved under pointwise limits]
\label{sec03-03:ex:LimitNotModulus} Let $X=\mathbb{R}^n$ and consider the
class $\mathcal{F}= \mathcal{C}^1(\mathbb{R}^n)$ of $\mathcal{C}^1$-smooth
functions. Let us further consider the sequence of descent moduli 
\begin{equation*}
T_{n}[f](x) = \sqrt[n]{\|\nabla f(x)\|}, \quad n\in \mathbb{N},
\end{equation*}
and its pointwise limit operator: 
\begin{equation*}
T[f](x) = \lim_{n\to \infty} T_n[f](x) = 
\begin{cases}
\phantom{jo}0\,,\quad & \text{ if }\nabla f(x) = 0, \\ 
\phantom{jo}1\,, & \text{ otherwise.}
\end{cases}
\end{equation*}
The operator $T$ preserves global minima and is monotone. However, it is not
scalar-monotone (and it clearly fails to determine functions in the sense of
Theorem~\ref{sec03:thm:Determination}.)%\begin{flushright}
$\hfill\Diamond$ %\end{flushright}
\end{example}

The following definition introduces a large subclass of abstract descent
moduli which provides a remedy to the above situation.

\begin{definition}[Homogeneous descent moduli]
\label{def-homog} Let $\mathcal{F}\subset\mathcal{C}(X)$
be a translation cone, and let $p\in (0,+\infty )$. An
operator $T:\mathcal{F}\rightarrow (\overline{\mathbb{R}}_{+})^{X}$ is said
to be

\begin{itemize}
\item[$(i).$] \textit{$p$--homogeneous} if $T[rf](x) = r^p\,T[f](x)$, for
every $f\in \mathcal{F}$ and $r>0$.

\item[$(ii).$] \textit{$p$--superhomogeneous} if $T[rf](x) \geq r^p\,T[f](x)$
, for every $f\in \mathcal{F}$ and $r>0$.
\end{itemize}
\end{definition}

Clearly all $p$--homogeneous and all $p$--superhomogeneous operator are also
scalar-monotone. The interest of this class is that every operator $T$ which
is defined as a pointwise limit of a sequence of $p$-(super)homogeneous
descent moduli $\{T_n\}_{n\in\mathbb{N}}$, that is, 
\begin{equation*}
T[f](x)=\lim_{n\to +\infty}T_n[f](x),\qquad\text{for all } f\in\mathcal{F}
\,\,\text{and }\, x\in \dom(f),
\end{equation*}
is itself a $p$--(super)homogeneous descent modulus. In other words, axiom
(D3) (scalar-monotonicity) is preserved in this context. One can also
observe that up to a composition with the strictly increasing function $\varphi(t):=t^{1/p}$, $p$--(super)homogenicity reduces to $1$
--(super)homogenicity.

\begin{proposition}
Let $(\Lambda, \preccurlyeq)$ be a directed set, $p\in (0,+\infty)$ and $(T_{\alpha})_{\alpha\in\Lambda}$ be a generalized sequence of $p$
--(super)homogeneous descent moduli for the class $\mathcal{F}$. Then the
following operators, defined for every $f\in \mathcal{F}$ and $x\in \dom(f)$
, are descent moduli for the class $\mathcal{F}$:\medskip\newline
$(i).$ $\left(\underset{\alpha\in\Lambda}{\limsup } \,T_{\alpha}
\right)[f](x) := \underset{\alpha\in\Lambda}{\limsup }\, T_{\alpha}[f](x)$;
\smallskip\newline
$(ii).$ $\left(\underset{\alpha\in\Lambda}{\sup }\, T_{\alpha}\right)[f](x)
:= \underset{\alpha\in\Lambda}{\sup } \,T_{\alpha}[f](x)$;\smallskip\newline
$(iii).$ $\left(\underset{\alpha\in\Lambda}{\liminf }\,
T_{\alpha}\right)[f](x) := \underset{\alpha\in\Lambda}{\liminf }\,
T_{\alpha}[f](x)$ ; \smallskip\newline
$(iv).$ $\left(\underset{\alpha\in\Lambda}{\inf }\, T_{\alpha}\right)[f](x)
:= \underset{\alpha\in\Lambda}{\inf}\, T_{\alpha}[f](x)$.
\end{proposition}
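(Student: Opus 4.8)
The plan is to verify the three axioms (D1)--(D3) of Definition~\ref{sec03:def:ModulusOfDescent} for each of the four aggregate operators, handling them uniformly. The guiding observation is that, as operations on nets in $\overline{\mathbb{R}}_+$, each of $\sup$, $\inf$, $\limsup$ and $\liminf$ is order-preserving and commutes with multiplication by a positive scalar; consequently the relevant properties of the individual $T_\alpha$ transfer to each aggregate almost mechanically. Axiom (D1) is immediate: if $x\in\argmin f$, then each $T_\alpha$ preserves global minima, so $T_\alpha[f](x)=0$ for every $\alpha\in\Lambda$; the sup, inf, limsup and liminf of the identically zero net all equal $0$, so each aggregate preserves global minima.

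For axiom (D2), suppose $(f(x)-f(z))_+\geq(g(x)-g(z))_+$ for all $z\in X$. Monotonicity of each $T_\alpha$ gives $T_\alpha[f](x)\geq T_\alpha[g](x)$ for every $\alpha$, and applying any of the four order-preserving operations to both sides of this net inequality yields the corresponding inequality between the aggregates. Hence all four operators are monotone.

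The only substantive point is axiom (D3). Write $U$ for any one of the four aggregates. The strategy is to show that $U$ inherits the $p$-(super)homogeneity of the family and then to invoke the remark following Definition~\ref{def-homog}, namely that $p$-(super)homogeneity implies scalar-monotonicity. In the homogeneous case we have $T_\alpha[rf](x)=r^p T_\alpha[f](x)$ for all $\alpha$ and $r>0$; since $r^p>0$ it factors out of $\sup$, $\inf$, $\limsup$ and $\liminf$ in $\overline{\mathbb{R}}_+$ (with the convention $r^p\cdot(+\infty)=+\infty$), so $U[rf](x)=r^p\,U[f](x)$ and $U$ is $p$-homogeneous. In the superhomogeneous case, $T_\alpha[rf](x)\geq r^p T_\alpha[f](x)$ for every $\alpha$; applying the order-preserving operation defining $U$ to both sides and factoring $r^p$ out of the right-hand side yields $U[rf](x)\geq r^p\,U[f](x)$, so $U$ is $p$-superhomogeneous. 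In either case scalar-monotonicity of $U$ follows, completing the verification.

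The main obstacle, such as it is, is purely the extended-real bookkeeping: one must check that factoring $r^p>0$ through $\sup$, $\inf$, $\limsup$ and $\liminf$ stays valid when some of the values $T_\alpha[f](x)$ equal $+\infty$, and that the net-limits are read in the standard sense $\limsup_\alpha a_\alpha=\inf_\beta\sup_{\alpha\succcurlyeq\beta}a_\alpha$ and $\liminf_\alpha a_\alpha=\sup_\beta\inf_{\alpha\succcurlyeq\beta}a_\alpha$, for which both order-preservation and positive-scalar homogeneity are standard. No genuine difficulty arises beyond this.
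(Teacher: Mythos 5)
Your proof is correct and follows essentially the same route as the paper's: verify (D1) and (D2) by order-preservation of the four aggregation operations, and obtain (D3) by showing that $p$-(super)homogeneity passes to the aggregate and then invoking the fact that $p$-(super)homogeneous operators are scalar-monotone. The paper writes out only the $\limsup$ case and declares the other three analogous, so your uniform treatment is, if anything, slightly more explicit.
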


\medskip

\begin{proof} Let us verify that $T := \limsup_{\alpha} T_{\alpha}$ satisfies axioms (D1)--(D3) of Definition~\ref{sec03:def:ModulusOfDescent}. (A~si\-mi\-lar reasoning will apply to the other three operators.)\smallskip\newline 
-- (D1) (\textit{Preservation of global minima}) Choose $f\in \mathcal{F}$ and $x\in\argmin f$. Then, $T_{\alpha}[f](x) = 0$ for all $\alpha\in \Lambda$ and so $T[f](x) = 0$. Thus, $T$ preserves global minima. \smallskip\newline 
-- (D2) (\textit{Monotonicity}). Let $f,g\in \mathcal{F}$ and $x\in X$ be such that $ (f(x) - f(z))_+\geq (g(x) - g(z))_+,$ for all $z\in X$. Then, $T_{\alpha}[f](x)\geq T_{\alpha}[g](x)$ for each $\alpha\in \Lambda$. Thus, $T[f](x)\geq T[g](x)$ as well, showing that $T$ is monotone \smallskip\newline 
-- (D3) (\textit{Scalar-monotonicity}): Let $f\in \mathcal{F}$ and $x\in X$. We readily deduce from $p$-superhomogeneity that $T[rf](x) = \limsup_{\alpha} T_{\alpha}[rf](x)\,\geq\, r^p\, \limsup_{\alpha} T_{\alpha}[f](x) = \, r^p\,T[f](x)$. It follows that $T$ is also $p$--superhomogeneous, therefore, in particular, scalar-monotone. \smallskip\newline 
The proof is complete. 
\end{proof}

\subsection{Slope-like operators that are not descent moduli}

We finish this section by discussing two examples in the literature that have being introduced as ``slope operators'' on a metric space $(X,d)$, but fail to verify Definition~\ref{sec03:def:ModulusOfDescent} of descent modulus.

The first concept is the so-called \textit{weak slope}, introduced in \cite{DM1994Critical,CDM1993Deformation}. For a continuous function $f:X\to \mathbb{R}$, the weak slope at a point $x\in X$, denoted by $|df|(x)$, is defined as the supremum of $\sigma\in\mathbb{R}_+$ such that there exist $\delta>0$ and a continuous map $\mathcal{H}:[0,\delta]\times B(x,\delta)\to X$ such that 
\begin{equation}\label{eq:CondWeakSlope}
  \forall s\in [0,\delta],\,\forall y\in B(x,\delta),\quad d(\mathcal{H}(s,y),y)\leq s\,\, \text{ and } \, f(\mathcal{H}(s,y)) \leq f(y) - \sigma s.
\end{equation}

Notice that $|df(x)|\geq \sigma$ whenever it is possible to find a continuous deformation $\mathcal{H}$ over a neighborhood of $x$, such that the descent of $f$ through that deformation is at least $\sigma$ for every point $y$ over which $\mathcal{H}$ is acting. Thus, one might interpret the weak-slope as the slowest descent around $x$. This concept has been largely studied in the setting of nonsmooth variational analysis and critical point theory.

The second concept is the \textit{limiting slope} (see, e.g. \cite[Definition 8.4]{Ioffe2017Variational}), which is defined as the lower semicontinuous envelope (or closure) of the strong slope $|\nabla f|$. That is, for a lower semicontinuous function $f:X\to \mathbb{R}$ and a point $x\in X$, the limiting slope of $f$ at $x$ is given by
\begin{equation}
\overline{|\nabla f|}(x) := 
%\liminf_{y\to_f x} |\nabla f|(x) = 
\lim_{\varepsilon \to 0}\inf\left\{|\nabla f |(y)\ :\ d(x,y)\leq \varepsilon,\text{ and }f(y)\leq f(x)+\varepsilon\right\}.
\end{equation}
Since the slope that can be very ill-behaved, the limiting slope provides a regularized alternative. It is worth to mention that 
using this notion, Drusvyatskiy,  Ioffe and Lewis were able to deal with the long-standing problem of existence of steepest descent curves \cite{DIL2015}.
\smallskip\newline
The following example shows that the weak slope and the limiting slope are not descent moduli for $\mathcal{K}(X)$, since they fail to determine coercive continuous functions even in the interval $[0,1]$.
\begin{example} Let  $\mathfrak{c}:[0,1]\to[0,1]$ be the well-known Cantor Staircase and let us consider the function $f:[0,1]\to \mathbb{R}$ given by $f(t) = \mathfrak{c}(t) + t$. By construction, it is not hard to see that $|\nabla f|(t) \in \{ 1,+\infty\}$ for every $t\in (0,1]$, that $|\nabla f|(0) = 0$ (since $0\in\argmin f$), and that the slope is $+\infty$ only in a subset of the Cantor set. Thus,
	\[
	\overline{|\nabla f|}(t) = \ind{(0,1]}(t):=\begin{cases}
		1,\quad&\text{ if }t\in(0,1]\\
		0,\quad&\text{ if }t = 0.
	\end{cases}
	\]
Similarly, we claim that $|df|(t)$ takes the same values as $\overline{|\nabla f|}(t)$. Clearly $|df|(0) = 0$ and $|df|(t)\geq 1$ for all $t\in (0,1]$. Now, fix $\bar{t} \in (0,1]$  and take any $\sigma>0$, $\delta>0$ and $\mathcal{H}$ satisfying \eqref{eq:CondWeakSlope}.  Since $f$ is strictly increasing, $\mathcal{H}(s,t)<t$ for every $t\in B(\bar{t},\delta)$ and every $s\in [0,\delta]$. In particular, $0<t-\mathcal{H}(s,t)= d(\mathcal{H}(s,t),t)\leq s$. Whence $t-s\leq \mathcal{H}(s,t)$ and consequently, $f(t-s)\leq f(\mathcal{H}(s,t))$.  Since the Cantor set is totally disconnected, there exists $t\in (\bar{t}-\delta,\bar{t})$ such that $|\nabla f|(t) = 1$. Then, 
\[
|\nabla f|(t) \geq \limsup_{s\to 0^+}\, \frac{f(t) - f(t-s)}{s}\,\geq \,\limsup_{s\to 0^+} \frac{f(t) - f(\mathcal{H}(s,t))}{s}\geq \sigma.
\]
Thus, $\sigma\leq 1$, which proves that $|df(\bar{t})|=1$. This proves the claim.
By taking $g:[0,1]\to \mathbb{R}$ given by $g(t) = t$, we get that $\overline{|\nabla g|}(t) = |dg|(t) = \ind{(0,1]}(t)$, and so, the conclusion of Theorem~\ref{sec03:thm:Determination} fails to hold for both the weak and the limiting slope. Since clearly both operators preserve global minima and are scalar-monotone (by homogeneity), we conclude that both operators fail to be monotone in the sense of Definition~\ref{sec03:def:ModulusOfDescent}.\hfill$\diamond$
\end{example}

%%%%%%%%%%%%%%%%%%%%%%%%%%%%%%%%%%%%%%%%%%%%%%%%%%%%%%%%%%%%%%%%%%%%%%%
%\newpage

\section{The paradigm of averaged descent} \label{sec04:Averaged}

It was shown in \cite[Theorem~3.8]{BCD2018} that two $\mathcal{C}^2$-smooth convex and bounded from below functions $f,g$ defined on a Hilbert space $\mathcal{H}$ are equal up to a constant, provided $\|\nabla f(x)\| = \|\nabla g(x)\|$, for all $x\in \mathcal{H}$. In other words, the operator:  
\begin{equation}\label{eq:gamma}
f\mapsto \Gamma[f] := \|\nabla f \|^2
\end{equation}
is injective, modulo the constant functions, on the class of $\mathcal{C}^2$-smooth convex and bounded from below functions.
Notice that the $\Gamma$-operator defined in \eqref{eq:gamma} (also known as \textit{carr\'e-du-champ} operator) is strongly related to the Wiener diffusion process, generated by the Laplacian operator. This hints towards a new important instance of descent modulus, namely the average descent, giving rise to a determination result of probabilistic nature. This will be developed in this section, in full generality.
\subsection{Extension of dispersion measures \label{sec04-01:extension}}
We first recall that for a $\mathcal{C}^1$-smooth function $f:\mathbb{R}^n\to\mathbb{R}$ the following formula holds: 
\begin{equation}\label{sec04:eq:IntegroDiff-formula}
\|\nabla f(x)\|^2 = \lim_{\varepsilon\to 0} \frac{n}{\mathcal{L}_n(B_n(x,\varepsilon))}\int_{B_n(x,\varepsilon)}\left[ \frac{f(x) - f(y)}{\|x-y\|} \right]^2 \mathcal{L}_n(dy),
\end{equation}
where, as mentioned in Section~\ref{sec02:Pre}, $\mathcal{L}_n$ stands for the usual Lebesgue measure on 
$\mathbb{R}^n$. The above formula is well-known and can be deduced from the following (also well-known) lemma, for which we provide a simple proof for completeness. 

\begin{lemma} For any $k\geq 1$, any $r>0$ and $V\in \mathbb{R}^k$ it holds:
	\begin{equation}
		\|V\|^{2}\,=\,\frac{k}{\mathcal{L}_{k}(B_{k}(0,r))}\int_{B_{k}(0,r)}
		\lt\langle V,\frac{u}{\|u\|}\rt\rangle ^{2}du.  \label{eq:jaime}
	\end{equation}
\end{lemma}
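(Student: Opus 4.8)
The plan is to prove the identity
\begin{equation*}
\|V\|^{2}=\frac{k}{\mathcal{L}_{k}(B_{k}(0,r))}\int_{B_{k}(0,r)}\left\langle V,\frac{u}{\|u\|}\right\rangle^{2}du
\end{equation*}
by reducing it to a single scalar integral over the unit sphere, exploiting rotational symmetry. First I would dispose of the trivial case $V=0$ and then normalize: writing $V=\|V\|e$ for a unit vector $e$, we have $\langle V,u/\|u\|\rangle^{2}=\|V\|^{2}\langle e,u/\|u\|\rangle^{2}$, so it suffices to show that the integral factor equals $\mathcal{L}_{k}(B_{k}(0,r))/k$ for any fixed unit vector $e$. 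By rotational invariance of both the ball $B_{k}(0,r)$ and Lebesgue measure, the value of $\int_{B_{k}(0,r)}\langle e,u/\|u\|\rangle^{2}\,du$ does not depend on the choice of $e$, so I may take $e$ to be the first coordinate vector, reducing $\langle e,u/\|u\|\rangle^{2}$ to $u_{1}^{2}/\|u\|^{2}$.

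Next I would pass to polar (spherical) coordinates $u=\rho\,\omega$ with $\rho\in(0,r)$ and $\omega\in\mathbb{S}_{k-1}$, under which $du=\rho^{k-1}\,d\rho\,d\sigma(\omega)$, where $\sigma$ is the surface measure on the unit sphere. The integrand $u_{1}^{2}/\|u\|^{2}$ becomes simply $\omega_{1}^{2}$, which is independent of $\rho$, so the integral factors as
\begin{equation*}
\int_{0}^{r}\rho^{k-1}\,d\rho\cdot\int_{\mathbb{S}_{k-1}}\omega_{1}^{2}\,d\sigma(\omega)=\frac{r^{k}}{k}\int_{\mathbb{S}_{k-1}}\omega_{1}^{2}\,d\sigma(\omega).
\end{equation*}
To evaluate the spherical average, I would again invoke symmetry: by permutation invariance of the sphere the integrals $\int_{\mathbb{S}_{k-1}}\omega_{i}^{2}\,d\sigma$ are equal for all $i$, and since $\sum_{i=1}^{k}\omega_{i}^{2}=1$ on the sphere, each equals $\sigma(\mathbb{S}_{k-1})/k$.

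Finally I would assemble the pieces. Combining the two displays gives $\int_{B_{k}(0,r)}\langle e,u/\|u\|\rangle^{2}\,du=\frac{r^{k}}{k}\cdot\frac{\sigma(\mathbb{S}_{k-1})}{k}$, and I would then use the standard relation $\mathcal{L}_{k}(B_{k}(0,r))=\frac{r^{k}}{k}\sigma(\mathbb{S}_{k-1})$ between the volume of the ball and the surface area of the sphere (itself a consequence of the same polar-coordinate change) to rewrite this as $\mathcal{L}_{k}(B_{k}(0,r))/k$. Multiplying back by $\|V\|^{2}$ and by the prefactor $k/\mathcal{L}_{k}(B_{k}(0,r))$ yields exactly $\|V\|^{2}$, as claimed. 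I do not expect a serious obstacle here; the only point requiring mild care is the bookkeeping of the surface-measure normalization, ensuring that the same constant $\sigma(\mathbb{S}_{k-1})$ appears consistently in both the volume formula and the spherical average, so that it cancels cleanly. An equivalent route that sidesteps computing $\sigma(\mathbb{S}_{k-1})$ explicitly is to observe directly that averaging $u_{i}^{2}/\|u\|^{2}$ over the ball and summing over $i$ gives $1$, forcing each average to be $1/k$, which is precisely the constant appearing in the prefactor.
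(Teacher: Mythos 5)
Your proof is correct, and its essential content — rotational invariance to reduce to a coordinate vector, followed by the observation that the $k$ coordinate-squared averages are equal and sum to $1$ — is exactly the paper's argument; the polar-coordinate factorization and the explicit constant $\sigma(\mathbb{S}_{k-1})$ are an unnecessary (though harmless) detour, and the ``equivalent route'' you mention at the end, averaging $u_i^2/\|u\|^2$ directly over the ball and summing over $i$, is precisely the proof given in the paper.
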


\begin{proof}
The proof is a  consequence of the invariance by rotations of the ball.
Consider $(e_i)_{i=1}^k$  the usual orthonormal basis of $\mathbb{R}^k$.
By symmetry, we can restrict to the case where $V = \| V\| \cdot e_1$, so that
\bq
\int_{B_{k}(0,r)}	\lt\langle V,\frac{u}{\|u\|}\rt\rangle ^{2}\, du&=&\| V\|^2\int_{B_{k}(0,r)}	\lt\langle e_1,\frac{u}{\|u\|}\rt\rangle ^{2}\,du\\
&=&\| V\|^2\int_{B_{k}(0,r)}	\lt\langle e_i,\frac{u}{\|u\|}\rt\rangle ^{2}\,du
\eq
for any $i\in\{1, ..., n\}$. We deduce
\bq
\int_{B_{k}(0,r)}	\lt\langle V,\frac{u}{\|u\|}\rt\rangle ^{2}\, du&=&\f{\| V\|^2}{k}\int_{B_{k}(0,r)}\sum_{i=1}^n \lt\langle e_i,\frac{u}{\|u\|}\rt\rangle ^{2}\,du\\
&=&\f{\| V\|^2}{k}\int_{B_{k}(0,r)}\lt \| \frac{u}{\|u\|} \rt\|^2\, du\\
&=&\f{\| V\|^2}{k}\int_{B_{k}(0,r)}1\, du\\
&=&\f{\| V\|^2\mathcal{L}_{k}(B_{k}(0,r))}{k}\eq
leading to the desired equality.

\end{proof}

Based on equation \eqref{sec04:eq:IntegroDiff-formula}, we propose an extension of the $\Gamma$-operator \eqref{eq:gamma}, that we call \textit{dispersion operator}, for functions defined on a topological space $(X,\tau)$. \smallskip\newline 
To this end, we consider the family $\beta = \{\beta_x\}_{x\in X}$ of neighborhood bases: $\beta_x$ is a neighborhood base at $x$ of the topology $\tau$. We further denote by $$\mu: X\times \mathcal{B}(X)\to \mathbb{R}_+$$ 
a mapping that associates to every $x\in X$, a locally finite measure $\mu(x,\cdot)\equiv \mu_x$ (that is, for every $y\in X$, $\mu_x$ is finite on a neighborhood $V_y$ of $y$),  with positive measure at every element of $\beta_x$. Let further $m:X\times X\to \mathbb{R}_+$ be as in Proposition~\ref{prop-m-slope}, that is, $$m(x,y) = 0 \iff x = y.$$
Finally, let us consider the \textit{local dimension} mapping $n:X\to \mathbb{R}_+$, where we interpret $n(x)$ to be the local dimension of $X$ at $x$. (Obviously, if $X=\mathbb{R}^n$ or if $X$ is a manifold of dimension $n$, then $n(x)\equiv n$, for all $x\in X$.) \smallskip\newline
We are now ready to give the following definition:

\begin{definition}[Dispersion operator]\label{sec04:def:ExtendendedDiffusionOp}
Let $p\in(0,+\infty)$. We define the $p$-dispersion operator $T_{\mu}$ (depending also on $\beta$ and $n:X\to \mathbb{R}_+$) as follows:
\begin{equation}\label{sec04:eq:ExtensionDiffusionOp}
    T_{\mu}[f](x) := \limsup_{B\in \beta_x} \frac{n(x)}{\mu_x(B)} \int_{B}\left|\Delta_f(x,y) \right|^p \mu_x(dy)
\end{equation}
where the limit-superior is taken over the inductive set $\beta_x$ endowed with the partial order of the reverse inclusion and
\begin{equation}\label{eq:Def-Deltaf}
\Delta_f(x,y) := \left\{\begin{array}{cc}
 \frac{f(x) - f(y)}{m(x,y)}, &\text{ if }y\neq x,\\
 \\
 0,&\text{ if }y=x.
\end{array}\right.
\end{equation}
\end{definition}

\begin{remark} $\mathrm{(i).}$ We kept the notation simple and denoted the above dispersion operator by $T_{\mu}$ (rather than $T_{\mu,\beta,m,n,p}$) in order to emphasize that $T_{\mu}$ is the limit-superior of integral operators. The action at $x$ in these operators is integrated by the measure $\mu_{x}$.\smallskip\newline
$\mathrm{(ii).}$  Definition~\ref{sec04:def:ExtendendedDiffusionOp} is inspired by a construction used in~\cite{Sturm1998} to extend diffusion processes to metric spaces. The ``$\limsup$'' ensures that $T_{\mu}$ is always well-defined, with possibly $+\infty$--values. When $X$ is a metric space and $m$ is the distance function, the domain $\dom(T_{\mu})$ contains at least all (locally) Lipschitz functions. This makes the dispersion operator to be a nontrivial extension of \eqref{eq:gamma} beyond the differentiable setting. \smallskip\newline
$\mathrm{(iii).}$  The family $\beta$ in Definition~\ref{sec04:def:ExtendendedDiffusionOp} encompasses several natural choices when the structure of the space $(X,\tau)$ is known. For example, if $(X,\tau)$ is a (pseudo)metric space, then we can take the set of corresponding balls $\beta_x = \{ B(x,r)\}_{r>0}$, for all $x\in X$. More generally, if the topological space $(X,\tau)$ is first-countable, then a natural choice is $\beta_x = \{ \mathcal{V}_n\}_{n\in\mathbb{N}}$, where $\{\mathcal{V}_n\}_{n\in \mathbb{N}}$ is any countable basis of the neighborhoods of~$x$.\smallskip\newline
If $X = \mathbb{R}^n$, then our default choice will be $\beta_x := \{ B(x, r)\}_{r>0}$. 
\end{remark}

We denote by $\mathcal{S}_n^+$ the set of ($n\times n$)--positive semidefinite matrices, and let us consider a map $R:\mathbb{R}^{n}\rightarrow\mathcal{S}_n^{+}$. The following proposition shows that the operators of the form $$\Gamma_R [f](x) = \|R(x)\nabla f(x)\|^2, \,\, x\in \mathbb{R}^n$$ can be obtained as particular cases of~\eqref{sec04:eq:ExtensionDiffusionOp}, under suitable choices of the parameter $p>0$, the separation map $m$, the measure map $\mu:\mathbb{R}^n\times\mathcal{B}(\mathbb{R}^n)\to \mathbb{R}_+$ and a local dimension map $x\mapsto n(x)$. \smallskip\newline
In what follows $\mathrm{supp }\left(\mu_{x}\right)$ stands for the support of the measure $\mu_{x}:=\mu(x,\cdot)$. We say that 
a measure $\mu$ is absolutely continuous with respect to $\nu$ (and denote $\mu<<\nu$) if both measures are defined on the same measurable space $(X,\mathcal{B})$ and it holds: 
$$\nu(A)=0\,\Longrightarrow\,\mu(A)=0,\quad\text{ for all } \,A\in \mathcal{B}.$$
%Let us also recall that for any $k\geq 1$ and $V\in \mathbb{R}^{k}$ it holds:
%\begin{equation}
%\limsup_{r>0}\frac{k}{\mathcal{L}_{k}(B_{k}(0,r))}\int_{B_{k}(0,r)}\left
%\langle V,\frac{u}{\|u\|}\right\rangle ^{2}du=\|V\|^{2}.  \label{eq:jaime}
%\end{equation}
We are now ready to state and prove the following result:

\begin{proposition}\label{sec04:prop:extensionDifussionToMetric}
Let $R:\mathbb{R}^{n}\rightarrow \mathcal{S}_{n}^{+}$ and set $W_{x} :=x+\mathrm{Ker}(R(x))^{\perp}$, for each $x\in \mathbb{R}^n$. Then for $m(x,y):=\Vert x-y\Vert $, and $p=2$, there exist a measure map $\mu :\mathbb{R}^{n}\times \mathcal{B}(\mathbb{R}^{n})\rightarrow \mathbb{R}_{+}$ and a dimension map $n:X\to \mathbb{R}_+$ such that 
$\mathrm{supp}\left( \mu _{x}\right) \subset W_{x}$ for all $x\in \mathbb{R}^{n}$ and 
\begin{equation*}
T_{\mu }[f](x)=\Vert R(x)\nabla f(x)\Vert ^{2},\quad \text{for every }f\in 
\mathcal{C}^{1}(\mathbb{R}^{n}).
\end{equation*}
\end{proposition}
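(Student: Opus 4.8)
The plan is to encode the positive semidefinite matrix $R(x)$ into the measure map $\mu$ and the local dimension $n(\cdot)$ by means of the spectral decomposition of $R(x)$, concentrating $\mu_x$ on the eigen-axes of $R(x)$ with weights equal to the squares of the corresponding eigenvalues. Concretely, I would first diagonalize: since $R(x)\in\mathcal{S}_n^+$, write $R(x)=\sum_{i=1}^n\lambda_i(x)\,e_i(x)e_i(x)^\top$ with $\lambda_i(x)\geq0$ and $\{e_i(x)\}_{i=1}^n$ an orthonormal eigenbasis. Because $R(x)$ is symmetric, $\mathrm{Ker}(R(x))^{\perp}=\mathrm{Range}(R(x))=\mathrm{span}\{e_i(x):\lambda_i(x)>0\}$, so that $W_x=x+\mathrm{span}\{e_i(x):\lambda_i(x)>0\}$.

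Next I would specify the data of the dispersion operator. Set the local dimension $n(x):=\mathrm{tr}(R(x)^2)=\sum_{i}\lambda_i(x)^2$, and, when $R(x)\neq0$, set
\[
\mu_x:=\sum_{i:\lambda_i(x)>0}\lambda_i(x)^2\,\ell_{L_i(x)},
\]
where $L_i(x):=x+\mathbb{R}e_i(x)$ and $\ell_{L_i(x)}$ is the one-dimensional length measure on the line $L_i(x)$ (the pushforward of Lebesgue measure on $\mathbb{R}$ under $t\mapsto x+te_i(x)$); when $R(x)=0$ set $\mu_x:=\delta_x$. It is routine to check that this defines an admissible measure map: each $\mu_x$ is locally finite, its support is $\bigcup_{\lambda_i(x)>0}L_i(x)\subset W_x$ (respectively $\{x\}=W_x$), and with the default base $\beta_x=\{B(x,r)\}_{r>0}$ one has $\mu_x(B(x,r))=2r\sum_i\lambda_i(x)^2>0$ whenever $R(x)\neq0$ (and $\delta_x(B(x,r))=1$ otherwise), so $\mu_x$ charges every element of $\beta_x$.

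The heart of the proof is the computation of $T_{\mu}[f](x)$ for $f\in\mathcal{C}^1(\mathbb{R}^n)$, with $p=2$ and $m(x,y)=\|x-y\|$. Because $\mu_x$ lives on the axes, the integral splits as
\[
\int_{B(x,r)}|\Delta_f(x,y)|^2\,\mu_x(dy)=\sum_{i:\lambda_i(x)>0}\lambda_i(x)^2\int_{-r}^{r}\Big[\tfrac{f(x)-f(x+te_i(x))}{|t|}\Big]^2dt,
\]
reducing everything to one-dimensional difference quotients of the $\mathcal{C}^1$ slices $t\mapsto f(x+te_i(x))$. For each slice the quotient equals $-\mathrm{sgn}(t)\tfrac{f(x+te_i(x))-f(x)}{t}$, which is bounded near $0$ (mean value theorem) and converges to $-\mathrm{sgn}(t)\langle\nabla f(x),e_i(x)\rangle$ as $t\to0$; hence by bounded convergence $\tfrac{1}{2r}\int_{-r}^{r}[\cdots]^2\,dt\to\langle\nabla f(x),e_i(x)\rangle^2$. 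Dividing by $\mu_x(B(x,r))=2r\,n(x)$ and multiplying by $n(x)$, the radial factor $2r\,n(x)$ cancels, the $\limsup$ over shrinking balls is a genuine limit, and I obtain
\[
T_{\mu}[f](x)=\lim_{r\to0}\frac{n(x)}{2r\,n(x)}\sum_i\lambda_i(x)^2\int_{-r}^{r}[\cdots]^2dt=\sum_i\lambda_i(x)^2\langle\nabla f(x),e_i(x)\rangle^2.
\]
Finally, since $R(x)\nabla f(x)=\sum_i\lambda_i(x)\langle\nabla f(x),e_i(x)\rangle e_i(x)$, orthonormality of $\{e_i(x)\}$ gives $\|R(x)\nabla f(x)\|^2=\sum_i\lambda_i(x)^2\langle\nabla f(x),e_i(x)\rangle^2$, which matches; the case $R(x)=0$ is trivial since then the integrand vanishes.

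I expect the main obstacle to be conceptual rather than technical: one must realize that the flat $k$-dimensional Lebesgue measure on $W_x$ (which, via the Lemma behind~\eqref{sec04:eq:IntegroDiff-formula}, would only reproduce $\|P_x\nabla f(x)\|^2$, the squared norm of the orthogonal projection of the gradient onto the range of $R(x)$) cannot capture the anisotropy of a general $R(x)\in\mathcal{S}_n^+$, and that the correct device is to weight each eigendirection $e_i(x)$ by $\lambda_i(x)^2$ while calibrating the dimension by $n(x)=\mathrm{tr}(R(x)^2)$. Once the axis measure is chosen, the only analytic point requiring care is interchanging the limit $r\to0$ with the integral, which is painless here because the support is one-dimensional and bounded convergence applies slice by slice; the isotropic identity~\eqref{sec04:eq:IntegroDiff-formula} is thus replaced by an elementary one-dimensional computation along each axis.
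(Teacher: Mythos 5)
Your proof is correct, but it takes a genuinely different route from the paper's. The paper keeps the measure $\mu_x$ absolutely continuous with respect to the $k$-dimensional Lebesgue measure on $W_x$ (where $k=\dim\mathrm{Ker}(R(x))^\perp$), choosing the anisotropic density $h_k(v)=\tfrac{\Vert R\Psi^{-1}(v)\Vert^2}{\Vert \Psi^{-1}(v)\Vert^2}[J\Psi(\Psi^{-1}(v))]^{-1}$ built from the isometric automorphism $\Psi(u)=\tfrac{\Vert u\Vert}{\Vert Ru\Vert}Ru$; a change of variables then converts $\int\langle\nabla f(x),\tfrac{z}{\Vert z\Vert}\rangle^2 h\,d\lambda$ into $\int\langle R[\nabla f(x)]_k,\tfrac{u}{\Vert u\Vert}\rangle^2du$, and the rotation-averaging identity~\eqref{eq:jaime} delivers $\Vert R(x)\nabla f(x)\Vert^2$. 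You instead concentrate $\mu_x$ on the eigen-lines of $R(x)$ with weights $\lambda_i(x)^2$ and calibrate $n(x)=\mathrm{tr}(R(x)^2)$, which reduces everything to one-dimensional difference quotients and bypasses both the change-of-variables machinery and the averaging lemma. Since the statement only requires $\mathrm{supp}(\mu_x)\subset W_x$ (not equality or absolute continuity), your singular measure is admissible, it is locally finite and charges every ball of the default base $\beta_x=\{B(x,r)\}_{r>0}$, and your limit computation (the squared quotient extends continuously to $t=0$ with value $\langle\nabla f(x),e_i(x)\rangle^2$, so its average over $[-r,r]$ converges) is sound; the degenerate case $R(x)=0$ is handled with $\mu_x=\delta_x$ exactly as in the paper. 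What each approach buys: yours is shorter and more elementary, and makes transparent why a flat Lebesgue measure on $W_x$ could only produce the projected gradient norm; the paper's construction produces a measure that is diffuse on all of $W_x$, which is closer in spirit to the diffusion-process motivation of Section~\ref{sec04:Averaged} and to the constructions of~\cite{Sturm1998}, at the cost of the Jacobian bookkeeping.
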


\begin{proof}
Let us fix $x\in \mathbb{R}^{n}$. We are going to define a positive real value $n(x)$ and a measure $\mu _{x}$
whose support is contained in $W_{x}$, in a way that: 
\begin{equation}
T_{\mu }[f](x)=\limsup_{r>0}\frac{n(x)}{\mu _{x}(B(x,r))}\int_{B(x,r)}\Delta_{f}(x,y)^{2}\mu _{x}(dy)=\Vert R(x)\nabla f(x)\Vert ^{2}.  \label{eq:ortega}
\end{equation}
Set $k=\dim \left( \mathrm{Ker\ }R(x)\right) ^{\perp },$ $0\leq k\leq n.$\smallskip\newline
If $k=0,$ then $\mathrm{Ker\ }R(x)=\mathbb{R}^{n},$ $W_{x}\equiv x$ and 
$R(x)\nabla f(x)=0.$ Then (\ref{eq:ortega}) holds trivially by setting $\mu_{x}=\delta _{x}$ (the Dirac measure at $x$) and using the fact that $\Delta_{f}(x,x)=0$ (\textit{cf.} (\ref{eq:Def-Deltaf})).\smallskip \newline
Let us now assume that $1\leq k\leq n.$ Let $\{e_{j}\}_{j=1}^{n}$ be an
orthonormal base of $\mathbb{R}^{n}$ such that
\begin{equation*}
\left( \mathrm{Ker\ }R(x)\right) ^{\perp }=\mathrm{span\ }
(e_{j})_{j=1}^{k}=\mathbb{R}^{k}
\end{equation*}
and
\begin{equation*}
\mathrm{Ker\ }R(x)\equiv\mathbb{R}^{n-k}=\left\{ 
\begin{tabular}{cc}
$\mathrm{span\ }(e_{j})_{j=k+1}^{n},$ & \quad for $k<n$\smallskip 
\\ 
$\{0\}$ & \quad for $k=n$.
\end{tabular}
\right. 
\end{equation*}
Then there exists $R\in \mathcal{S}_{k}^{+}$ (the trace of $R(x)\in \mathcal{S}_{n}^{+}$ on the subspace $\mathbb{R}^{k}\times\{0\}^{n-k}$ of $\mathbb{R}^n$) such that decomposing $z\in \mathbb{R}^{n}$
as $z=(v,w)\in \mathbb{R}^{k}\times \mathbb{R}^{n-k},$ it holds $R(x)z=Rv.$
\smallskip \newline
Let $\Psi :\mathbb{R}^{k}\rightarrow \mathbb{R}^{k}$ be given by
\begin{equation}\label{eq:ort-3}
\Psi (u)=\left\{ 
\begin{array}{cl}
\frac{\Vert u\Vert }{\Vert Ru\Vert }Ru & \text{ if }u\neq 0 \smallskip \\ 
0 & \text{ otherwise}.
\end{array}
\right.  
\end{equation}
Clearly $\Psi $ is an isometric automorphism of $\mathbb{R}^{k}$ (depending
on $x$, which is fixed) with inverse:
\begin{equation*}
\Psi ^{-1}(v)=\left\{ 
\begin{array}{cl}
\frac{\Vert v\Vert }{\Vert R^{-1}v\Vert }R^{-1}v & \text{ if }v\neq 0\smallskip \\ 
0 & \text{ otherwise}.
\end{array}
\right. 
\end{equation*}
In particular $\Psi (B_{k}(0,r))=B_{k}(0,r),$ for every $r>0$. Furthermore, $\Psi $ is a $\mathcal{C}^{1}$-diffeomorphism of $\mathbb{R}^{k}\setminus
\{0\}$.  Following the notation of \cite[Chapter 3]{EG2015Measure}, let us define the Jacobian operator as
\begin{equation}
	J\Psi(u) = |\det (D\Psi(u))|,
\end{equation}
where $D\Psi$ is the derivative of $\Psi$. We define $h_{k}:\mathbb{R}^{k}\rightarrow \mathbb{R}$
(depending on $\Psi$, therefore on $x$) such that
\begin{equation*}
h_{k}(v)=\frac{\Vert R\Psi ^{-1}(v)\Vert ^{2}}{\Vert \Psi ^{-1}(v)\Vert ^{2}}
\,\left[J\Psi (\Psi ^{-1}(v))\right]^{-1},\quad v\in \mathbb{R}^{k}.
\end{equation*}
Notice that for $v=\Psi (u)$ the above yields:
\begin{equation}
h_{k}(\Psi (u))=\frac{\Vert Ru\Vert ^{2}}{\Vert u\Vert ^{2}}\,[J\Psi
(u)] ^{-1},\quad u\in \mathbb{R}^{k}.  \label{eq:ort-1}
\end{equation}
We set
\begin{equation}
\left\{ 
\begin{array}{l}
h:\mathbb{R}^{n}\rightarrow \mathbb{R}\medskip  \\ 
h(z):=\,h_{k}(v),\quad \text{for }z=(v,w)\in \mathbb{R}^{n}
\end{array}
\right.   \label{eq:ort0}
\end{equation}
and consider the measure $\lambda :\mathcal{B}(\mathbb{R}^{n})\rightarrow 
\mathbb{R}_{+}$ (depending on $k=\dim(W_x)$) given by the formula
\begin{equation}
\lambda (A)=\mathcal{L}_{k}\left(A\cap \left(\mathbb{R}^{k}\times \{0\}^{n-k}\right)\right),\qquad \text{for all }A\in 
\mathcal{B}(\mathbb{R}^{n}).  \label{eq:ort-4}
\end{equation}
Let $\pi_k$ denote the projection of $\mathbb{R}^n$ to the first $k$-coordinates. Notice that $\lambda $ is the trivial extension to $\mathcal{B}(\mathbb{R}
^{n})$ of the Lebesgue measure $\mathcal{L}_{k}$ on $\mathcal{B}(\mathbb{R}
^{k})$. \smallskip 
Let us define
\begin{equation}
	\kappa:=\kappa(x) =\mathcal{L}_k(B_k(0,1))^{-1}\left( \int_{B_k(0,1)} \frac{\|Ru\|^2}{\|u\|^2}\mathcal{L}_k(du)\right).
\end{equation}
where $B_{k}(0,r)=\pi _{k}\left( B(0,r)\cap \left(\mathbb{R}^{k}\times \{0\}^{n-k}\right)\right)$. We finally set $n(x) := \kappa(x)\dim(W_x)$ and define the measure $\mu _{x}:\mathcal{B}(\mathbb{R}^{n})~\rightarrow 
~\mathbb{R}_{+}$ as follows:
\begin{equation}
\mu _{x}(A):=\int_{A-x} h(z)\lambda (dz)\equiv \int_{\pi _{k}\left( (A-x)\cap \left(\mathbb{R}^{k}\times \{0\}^{n-k}\right)\right)} h_{k}(v)\,\mathcal{L}_{k}(dv),\qquad \text{for all }A\in \mathcal{B}(\mathbb{R}^{n}).
\label{eq:ort1}
\end{equation}
 
This operation eliminates the last $n-k$ coordinates (which
are equal to $0$ for all elements of $(A-x)\cap  \left(\mathbb{R}^{k}\times \{0\}^{n-k}\right)$), adjusting vectors
to the right dimension for integration. By means of a change of variables induced by $\Psi$ (see, e.g., \cite[Theorem~3.9]{EG2015Measure}), we deduce
\begin{align*}
	\mu_x(B(x,r))& = \int_{B_k(0,r)}h_k(v)\mathcal{L}_{k}(dv)
	 = \int_{B_k(0,r)} h_k(\Psi(u))J\Psi(u)\mathcal{L}_{k}(du)\\
	&= r^{k}\int_{B(0,1)} \frac{\| Ru\|^2}{\|u\|^2} \mathcal{L}_{k}(du)
	= \kappa \mathcal{L}_k(B_k(0,1))r^k = \kappa \mathcal{L}_k(B_k(0,r))
\end{align*}
Now, using the first-order Taylor approximation of $f$ at $x,$ we deduce from \eqref{eq:Def-Deltaf} that
\begin{equation*}
\Delta _{f}(x,y)^{2}=\left[ \left\langle \nabla f(x),\frac{y-x}{\|y-x\|}\right\rangle
+\varepsilon (\|y-x\|)\right] ^{2},\qquad \text{where}\ \lim_{r\rightarrow
0}\,\varepsilon (r)=0.
\end{equation*}
For any $r>0$ we deduce from \eqref{eq:ort1} that:
\begin{align*}
\int_{B(x,r)}\Delta _{f}(x,y)^{2}\mu _{x}(dy)& =\int_{B(0,r)}\left[
\langle \nabla f(x),\frac{z}{\|z\|}\rangle\, +\,\varepsilon (\|z\|)
\right] ^{2}h(z)\lambda (dz) \\
& =\int_{B(0,r)}\langle \nabla f(x),\frac{z}{\|z\|}\rangle
^{2}\,h(z)\,\lambda (dz)\,\,+\\
& \phantom{ds}+ \int_{B(0,r)}2\,\langle \nabla f(x),\frac{z}{\|z\|}\rangle\, \varepsilon (\|z\|)\,h(z)\,\lambda (dz)   +\,\int_{B(0,r)}\varepsilon (\|z\|)^{2}\,h(z)\,\lambda (dz).
\end{align*}
Let $M>0$ be an upper bound of the function $h$ on $B(0,1)$. Since 
$\mu_{x}(B(x,r))= \kappa\mathcal{L}_{k}(B_{k}(0,r))$ and $n(x)=k\cdot\kappa$, it
follows that
\begin{align*}
\frac{2n(x)}{\mu _{x}(B(x,r))}\int_{B(0,r)}\left\langle \nabla f(x),\frac{z}{\|z\|}\right\rangle \varepsilon (\|z\|)h(z)\lambda (dz)& \leq 2\,k\,M\,\Vert
\nabla f(x)\Vert \,\varepsilon (r)\longrightarrow 0\quad \text{and} \\
\text{ }\frac{n(x)}{\mu _{x}(B(x,r))}.\int_{B(0,r)}\varepsilon
(\|z\|)^{2}h(z)\lambda (dz)& \leq \,k\,M\,\varepsilon
(r)^{2}\longrightarrow 0\quad \text{(as }r\rightarrow 0\text{).}
\end{align*}
Denoting by $[\nabla f(x)]_{k}\in \mathbb{R}^{k}$ the vector consisting of
the first $k$-coordinates of $\nabla f(x)$ and recalling the decomposition $z=(v,w)\in \mathbb{R}^{k}\times \mathbb{R}^{n-k}$ we deduce from \eqref{eq:ort0}:
\begin{align*}
\int_{B(0,r)}\left\langle \nabla f(x),\frac{z}{\|z\|}\right\rangle^{2}\, h(z)\lambda (dz)&\,= 
\int_{B(0,r)}\left\langle \nabla f(x),\frac{(v,w)}{\|(v,w)\|}\right\rangle h_{k}(v)\mathcal{L}_{k}(dv).
\end{align*}
Using the change of variables $v=\Psi (u)$ (recall that $\Psi (B(0,r))=B(0,r)
$ for every $r>0$) we obtain from \eqref{eq:ort-3} and \eqref{eq:ort-1}
\begin{align*}
&\int_{B(0,r)}\left\langle \nabla f(x),\frac{z}{\|z\|}\right\rangle
^{2}h(z)\lambda (dz)\\
 =&\int_{B_{k}(0,r)}\left\langle [\nabla
f(x)]_{k},\frac{\Psi (u)}{||\Psi (u)||}\right\rangle ^{2}\frac{\Vert Ru\Vert
^{2}}{\Vert u\Vert ^{2}}\,[J\Psi (u)]^{-1}J\Psi (u)du\, \\
=& \int_{B_{k}(0,r)}\left\langle [\nabla f(x)]_{k},\frac{Ru}{\|u\|}\right\rangle ^{2}du\,=\int_{B_{k}(0,r)}\left\langle R[\nabla
f(x)]_{k},\frac{u}{\|u\|}\right\rangle ^{2}du.
\end{align*}

Therefore we deduce from (\ref{eq:jaime}) and from the definitions of $n(x)$ and $\mu_x$:
\begin{align*}
T_{\mu }[f](x)&=\limsup_{r>0}\frac{n(x)}{\mu_x(B(x,r))}
\int_{B_{k}(0,r)}\left\langle R[\nabla f(x)]_{k},\frac{u}{\|u\|}
\right\rangle ^{2}du\\
&=\limsup_{r>0}\frac{k}{\mathcal{L}_{k}(B_{k}(0,r))}
\int_{B_{k}(0,r)}\left\langle R[\nabla f(x)]_{k},\frac{u}{\|u\|}
\right\rangle ^{2}du\\
&=\|R[\nabla f(x)]_{k}\|^{2}\equiv\Vert R(x)\nabla f(x)\Vert
^{2}.
\end{align*}
The proof is complete.
\end{proof}

\subsection{Oriented dispersion operators \label{sec04-02:OrientedDiffusion}}

The operator $T_{\mu}$ defined in \eqref{sec04:eq:ExtensionDiffusionOp} fails to determine continuous coercive functions, and consequently is not a descent modulus outside the differentiable setting. The reason for this failure will be illustrated in the following example.

\begin{example}\label{sec04:example:DiffusionNotDetermining} Let $X = [-1,1]$ and let $m$ be its usual metric. For each $x\in [-1,1]$, let $\mu(x,\cdot)$ be the usual Lebesgue measure over $[-1,1]$ and $n(x) = 1$. Set $p=2$. 
\[
f(x) = x^2\qquad\text{ and }\qquad g(x) = -x^2.
\]
By \eqref{sec04:eq:IntegroDiff-formula}, we have that 
\[
\forall x\in (-1,1),\quad T_{\mu}[g](x) = T_{\mu}[f](x) = |2x|^2.
\]
Furthermore, it is not hard to see that at $x = \pm 1$, we have that
$$
    T_{\mu}[g](x) = T_{\mu}[f](x) = \lim_{\varepsilon\to 0}\frac{1}{\varepsilon} \int_{1-\varepsilon}^1 \left(\frac{1 - t^2}{1-t}\right)^2dt 
    =\lim_{\varepsilon\to 0}\frac{1}{\varepsilon} \int_{1-\varepsilon}^1 (1+t)^2dt = 4 
$$

%\lim_{\varepsilon\to 0}\frac{1}{2\varepsilon}\left(\frac{t^3}{3} + t^2 +t\right)\Big|_{t= 1-\varepsilon}^1\\
%   &=\lim_{\varepsilon\to 0} \frac{\frac{1}{3}\varepsilon(\varepsilon^2 -6\varepsilon + 12 )}{2\varepsilon} = 2.

Since the only $T$-critical point of $g$ is $0$, we deduce that $T$ does not preserve global minima and so it is not a descent modulus. Furthermore, since the only $T$- critical point of $f$ is $0$ as well, we have constructed two different functions with $T_{\mu}[f] = T_{\mu}[g]$ and that coincide over $\mathcal{Z}_{T}(f)$. In conclusion, $T$ fails to determine continuous coercive functions in general metric spaces, in the sense of Theorem~\ref{sec03:thm:Determination}.
%\begin{flushright}
\hfill$\Diamond$
%\end{flushright}
\end{example}

In the above example, the points $x=-1$ and $x=1$ should have been critical for the function $g(x) = -x^2$, since they are global minimizers. However, this fails to be the case because the operator $T_{\mu}$ is not oriented. This leads to the following definition, which induces asymmetry between  descent and  ascent directions (by penalizing the latter). As we shall see, this is particularly relevant in nonsmooth settings.

\begin{definition}[Oriented dispersion operator] \label{def-oriented}Let $\mu$, $\beta$, $m$, $n$ and $p$ be as in Definition~\ref{sec04:def:ExtendendedDiffusionOp}. We define the oriented dispersion operator, denoted by $T^+_{\mu}$, as
\begin{align*}
    T^+_{\mu}[f](x) &:= \limsup_{B\in \beta_x}\frac{n(x)}{\mu_x(B)}\int_{B\cap [f\leq f(x)]} \left[\Delta_f(x,y)\right]^p\mu(x,dy)\\
    &= \limsup_{B\in \beta_x}\frac{n(x)}{\mu_x(B)}\int_{B} \left[\Delta_f^+(x,y)\right]^p\mu(x,dy),
\end{align*}
where 
\begin{equation}\label{eq:Delta+}
\Delta_f^+(x,y) := \left\{\begin{array}{cl}
\frac{ (f(x) - f(y))_+}{m(x,y)}&\text{ if }x\neq y\\
\\
0&\text{ if }x=y.
\end{array}\right.
\end{equation}

\end{definition}

The value $T_{\mu}^+[f](x)$ corresponds to the dispersion of $f$ at $x$ which is exclusively due to the directions of descent. In the smooth case, the value of the oriented dispersion $T_{\mu}^+[f](x)$ is the half of the value of the non-oriented dispersion $T_{\mu}[f](x)$, as expected by symmetry. This is the content of the following proposition.

\begin{proposition} Let $X = \mathbb{R}^n$, $n(x) \equiv n$, $\beta_x = \{ B(x,\varepsilon)\ :\ \varepsilon>0 \}$ and $\mu_x$ be the $n$-dimensional Lebesgue measure for every $x\in\mathbb{R}^n$. Take $p=2$ and $m(x,y) = \|x-y\|$. Then
\[
T^+_{\mu}[f](x) = \frac{1}{2}\|\nabla f(x)\|^2,\quad \text{for every }f\in 
\mathcal{C}^{1}(\mathbb{R}^{n}).
\]
\end{proposition}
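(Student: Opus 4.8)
The plan is to reduce the computation to the non-oriented dispersion formula \eqref{sec04:eq:IntegroDiff-formula} by means of a reflection symmetry of the ball. First I would fix $x\in\mathbb{R}^n$, write $V=\nabla f(x)$, and use the first-order Taylor expansion of $f$ at $x$ exactly as in the proof of Proposition~\ref{sec04:prop:extensionDifussionToMetric}: for $y=x+z$ with $z\in B(0,\varepsilon)$ one has $f(x)-f(y)=-\langle V,z\rangle+o(\|z\|)$, hence
\[
\Delta_f(x,y)=\left\langle V,-\frac{z}{\|z\|}\right\rangle+\varepsilon(\|z\|),\qquad \lim_{r\to 0}\varepsilon(r)=0.
\]
Taking positive parts and squaring gives $\Delta_f^+(x,y)^2=\big[(\langle V,-z/\|z\|\rangle+\varepsilon(\|z\|))_+\big]^2$.

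Next I would rescale via $z=\varepsilon w$, $w\in B(0,1)$. Since $\mu_x(B(x,\varepsilon))=\varepsilon^n\mathcal{L}_n(B(0,1))$ and the factor $\varepsilon^n$ coming from $dz=\varepsilon^n\,dw$ cancels, the averaged integral becomes
\[
\frac{n}{\mu_x(B(x,\varepsilon))}\int_{B(x,\varepsilon)}\Delta_f^+(x,y)^2\,dy=\frac{n}{\mathcal{L}_n(B(0,1))}\int_{B(0,1)}\Big[\big(\langle V,-\tfrac{w}{\|w\|}\rangle+\varepsilon(\varepsilon\|w\|)\big)_+\Big]^2 dw.
\]
Because $t\mapsto(t)_+$ is $1$-Lipschitz and $\varepsilon(\varepsilon\|w\|)\to 0$ uniformly on $B(0,1)$, the integrand converges pointwise to $(\langle V,-w/\|w\|\rangle)_+^2$ and is bounded by $(\|V\|+1)^2$ for small $\varepsilon$; dominated convergence then upgrades the $\limsup$ to a genuine limit equal to $\tfrac{n}{\mathcal{L}_n(B(0,1))}\int_{B(0,1)}(\langle V,-w/\|w\|\rangle)_+^2\,dw$.

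The decisive step is the symmetrization. The change of variables $w\mapsto -w$ preserves $B(0,1)$ and $\mathcal{L}_n$, so I may drop the minus sign in the argument; combining this reflection with the elementary identity $(a)_+^2+(-a)_+^2=a^2$ (valid for every $a\in\mathbb{R}$) yields
\[
\int_{B(0,1)}\left(\left\langle V,\frac{w}{\|w\|}\right\rangle\right)_+^2 dw=\frac{1}{2}\int_{B(0,1)}\left\langle V,\frac{w}{\|w\|}\right\rangle^2 dw.
\]
Applying the Lemma \eqref{eq:jaime} (with $k=n$ and $r=1$) to $V=\nabla f(x)$ identifies $\tfrac{n}{\mathcal{L}_n(B(0,1))}\int_{B(0,1)}\langle V,w/\|w\|\rangle^2\,dw$ with $\|V\|^2$, whence $T_\mu^+[f](x)=\tfrac{1}{2}\|\nabla f(x)\|^2$. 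I expect the only genuinely delicate point to be the dominated-convergence justification---transferring the uniform smallness of the Taylor remainder through the $1$-Lipschitz positive part to the squared integrand, so that the $\limsup$ is in fact a limit; the remaining arguments are routine symmetry bookkeeping combined with the already-established Lemma.
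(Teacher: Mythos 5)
Your proof is correct, but it follows a genuinely different route from the one in the paper. The paper works with the first form of the oriented operator, namely the integral of $\Delta_f(x,\cdot)^2$ over $B(x,r)\cap S$ with $S=[f\leq f(x)]$; after linearizing the integrand it must then compare the exact sublevel set $S$ with the half-space $H=\{\langle\nabla f(x),\cdot\rangle\leq 0\}$, and it controls the symmetric difference by applying the Implicit Function Theorem to the level set $[f=f(x)]$ to get $\mathcal{L}_n\big(B(0,r)\cap(S\triangle H)\big)=o(r^n)$, before invoking the reflection symmetry of the half-ball. You instead work with the equivalent positive-part form $\int_B[\Delta_f^+(x,y)]^2\,\mu_x(dy)$, push the Taylor remainder through the $1$-Lipschitz map $t\mapsto t_+$, and replace the geometric half-space argument by the algebraic identity $(a)_+^2+(-a)_+^2=a^2$ combined with the reflection $w\mapsto -w$ of the ball; the final reduction to $\|\nabla f(x)\|^2$ via \eqref{eq:jaime} is common to both arguments. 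Your route is shorter and more robust: it avoids the Implicit Function Theorem and the measure estimate on $S\triangle H$ entirely, handles the case $\nabla f(x)=0$ without a separate discussion (the paper treats it apart), and the step you flag as delicate is in fact unproblematic --- the Taylor remainder divided by $\|y-x\|$ is uniformly small on $B(x,\varepsilon)$, so after composing with the Lipschitz positive part and squaring, the rescaled integrands converge uniformly off the origin and are uniformly bounded, which makes the $\limsup$ a genuine limit. What the paper's longer argument buys in exchange is the explicit geometric picture that the descent region is asymptotically a half-ball, which is the heuristic behind the factor $\tfrac{1}{2}$; your computation obtains the same factor purely from the pointwise identity for positive parts.
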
 

\begin{proof}
If $\|\nabla f(x)\| = 0$, then $0\leq T_{\mu}^+[f](x) \leq T_{\mu}[f](x) = \|\nabla f(x)\|^2 = 0$ and the conclusion follows trivially. \smallskip \newline Let us now consider the case $\|\nabla f(x)\| \neq 0$. By a change of coordinates, we may assume that $x= 0$, $f(0) = 0$ and 
$\nabla f(0) = r\,e_n$, where $r = \|\nabla f(0)\|>0$ and $e_n$ be the $n$-th vector of an orthonormal base of $\mathbb{R}^n$. In this setting, we denote $$S := [f\leq f(0)]\quad \text{and }\quad \mathbb{R}^{n-1}:=\mathrm{span}
\{e_{j}\}_{j=1}^{n-1}\, \equiv \,\{x\in\mathbb{R}^n:\, \langle e_n,z\rangle = 0\}.$$

Following a similar development as in the proof of Proposition~\ref{sec04:prop:extensionDifussionToMetric}, for the particular case $R(x)=\mathbb{I}_n$ (the identity map on $\mathbb{R}^n$), we deduce
\[
T_{\mu }^+[f](x)\,=\,\limsup_{r>0}\frac{n}{\mathcal{L}_{n}(B(0,r))}
\int_{B(0,r)\cap S}\left\langle \nabla f(0),\frac{u}{\|u\|}
\right\rangle ^{2}du.
\]
Consider the semispace $H = \{x\in\mathbb{R}^n:\,\langle e_n, v \rangle\leq 0\}$. Then we have:
\[
\int_{B(0,r)\cap S}\left\langle \nabla f(0), \frac{u}{\|u\|} \right\rangle^2 du = \int_{B(0,r)\cap H}\left\langle \nabla f(0), \frac{u}{\|u\|}\right\rangle^2 du + \int_{B(0,r)\cap (S\triangle H)}\left\langle \nabla f(0), \frac{u}{\|u\|}\right\rangle^2 du\,.
\]
In what follows we show that $\mathcal{L}_n(B(0,r)\cap (S\triangle H))$ is small, where $S\triangle H$ denotes the symmetric difference between $S$ and $H$. To this end, it is easy to see that
\[
B(0,r)\cap (S\triangle H) \subset B_{n-1}(0,r)\times [-d(r),d(r)],
\]
where $d(r)$ stands for the maximal distance between the subspace $\mathbb{R}^{n-1}\times \{0\}$ and the elements of the following set (see Figure~\ref{fig1})
\[
D(r)=\{ (y,z)\in B(0,r):\ y\in B_{n-1}(0,r)\text{ and }f(y,z) = 0\}\,\bigcap\,B(0,r).
\]

% \mathbb{R}^{n-1}\times \mathbb{R}\ 

%%%%%%%%%%%%%%%%%%%%%
% Figure Box-approx

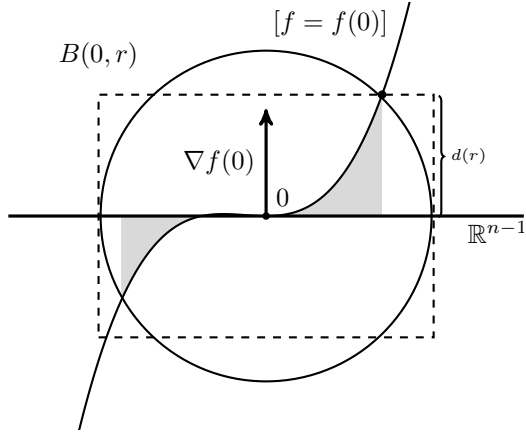
\begin{figure}[h!]\label{fig1}
    \centering
    \begin{tikzpicture}[>=stealth',scale=1]
    \begin{axis}[
	xmin=-2,xmax=2,
	ymin=-2,ymax=2,
	%axis lines=center,
	%axis line style=<->,
	yticklabels=\empty,
	xticklabels = \empty,
	axis line style={draw=none},
	tick style={draw=none}
	]
	\addplot[name path=f,-,thick] expression[domain=-2:2,samples=100,smooth]{x^3+0.5*x^2};

    \path[name path=axis] (axis cs:-2,0) -- (axis cs:2,0);
    
    \addplot [
        thick,
        color=gray,
        fill=gray, 
        fill opacity=0.3
    ]
    fill between[
        of=f and axis,
        soft clip={domain=0:0.9},
    ];
    \addplot [
        thick,
        color=gray,
        fill=gray, 
        fill opacity=0.3
    ]
    fill between[
        of=axis and f,
        soft clip={domain=-1.125:0},
    ];

    \draw[-,very thick] (axis cs:{-2},{0})--(axis cs:{2},{0});
  
    \draw[thick] (axis cs:{0},{0}) circle (2.2cm);	
    
    \node at (axis cs:{-1.3},{1.5}) {\small $B(0,r)$};

	\fill (axis cs:{0},{0}) circle (0.05cm) node[above right]{\small $0$};
    \draw[->,very thick](axis cs:{0},{0})--(axis cs:{0},{1}) node[midway,left]{\small $\nabla f(0)$};
    
    \draw[thick,dashed] (axis cs:{-1.3},{-0.9^3-0.5*0.9^2})--(axis cs:{-1.3},{0.9^3+0.5*0.9^2})--(axis cs:{1.3},{0.9^3+0.5*0.9^2}) -- (axis cs:{1.3},{-0.9^3-0.5*0.9^2})--cycle;

    \node at (axis cs:{0.5},{1.8}) {\small $[f= f(0)]$};
    
    \draw [thick, decorate,decoration={brace,amplitude=2pt,mirror},xshift=21.4pt,yshift=-0.4pt](axis cs:{0.9},{0})--(axis cs:{0.9},{0.9^3+ 0.5*0.9^2}) node[black,midway,xshift=0.4cm] {\tiny $d(r)$};
    \draw[fill] (axis cs:{0.9},{0.9^3+0.5*0.9^2}) circle (0.05cm);
    
    \node at (axis cs:{1.8},{-0.15}) {\small $\mathbb{R}^{n-1}$};
	\end{axis} 
	\end{tikzpicture}
	
	\caption{ The gray area corresponds to the asymmetric difference $S\triangle H$. The dashed line outlines the set $B_{n-1}(0,r)\times [-d(r),d(r)]$, where the dot depicts the farthest point of the set $D(r)$ to the linear subspace $\mathbb{R}^{n-1}$.}
\end{figure}
%%%%%%%%%%%%%%%%%%%%

Using the Implicit Function Theorem, we deduce the existence of an open subset $\mathcal{U}\subset \mathbb{R}^{n-1}$ containing~$0$, an open set $\mathcal{V}\subset \mathbb{R}^n$ containing $0$ and a function $\varphi:\mathcal{U}\to \mathbb{R}$ of class $\mathcal{C}^1$ such that its graph coincides with $[f = 0]\cap \mathcal{V}$ and 
\[
\nabla \varphi(0) = - (\partial_n f(0))^{-1} \begin{pmatrix}
\partial_1 f(0)\\
\vdots\\
\partial_{n-1}f(0)
\end{pmatrix} = \mathbf{0}_{n-1}.
\]

Therefore, for $r>0$ sufficiently small, we have $B_{n-1}(0,r)\subset \mathcal{U}$ and $B(0,r)\subset \mathcal{V}$, which yields
\[
\{ (y,z)\in B_{n-1}(0,r)\times \mathbb{R} : \, f(y,z) = 0\} = \{ (y,\varphi(y)):\, y\in B(0,r) \}.
\]
Therefore $d(r) = \sup \{ |\varphi(y)|:\, y\in B_{n-1}(0,r) \}$.  Evoking the mean value theorem we deduce
\[
\sup\{ |\varphi(y)|: \, y\in B_{n-1}(0,r) \} \,\leq \, r\cdot \sup\{ \|\nabla\varphi(y)\|:\, y\in B_{n-1}(0,r) \}.
\]
By continuity of $\nabla \varphi$ and recalling that $\nabla\varphi(0) = 0$ we deduce that $d(r) = o(r)$. Recalling formula~\eqref{eq:VolumeNBall} for the volume of the ($n-1$)-dimensional ball $B_{n-1}(0,r)$, we set
$$ K = \frac{2\pi^{(n-1)/2}}{\Gamma\left( \frac{n-1}{2} + 1 \right)} $$ 
and we obtain:
\[
\mathcal{L}_n (B(0,r)\cap (S\triangle H)) \leq \mathcal{L}_{n-1}(B_{n-1}(0,r))\cdot 2d(r) = (K\cdot r^{n-1})\,o(r) \equiv o(r^{n})
\]
Therefore,
\begin{align*}
    \frac{n}{\mathcal{L}_n(B(0,r))} \int_{B(0,r)\cap (S\triangle H)}\left\langle \nabla f(0), \frac{u}{\|u\|}\right\rangle^2 du\,    
    &\leq n\,\|\nabla f(0)\|^2 \,\frac{\mathcal{L}_n (B(0,r)\cap (S\Delta H))}{\mathcal{L}_n (B(0,r))} \smallskip\\
    &= \frac{n\|\nabla f(0)\|^2}{K_n}\frac{o(r^n)}{r^n} \xrightarrow{r\to 0} 0.
\end{align*}

Since $B(0,r)\cap H$ is the south--half of the ball $B(0,r)$, a symmetry argument ensures 
\begin{align*}
 T^+_{\mu}[f](0) &= \limsup_{r\to 0} \frac{n}{\mathcal{L}_n(B(0,r))} \, \int_{B(0,r)\cap H}\left\langle \nabla f(0), \frac{u}{\|u\|}\right\rangle^2 du \smallskip \\
 &= \frac{1}{2}\,\limsup_{r\to 0}\frac{n}{\mathcal{L}_n(B(0,r))}\,\int_{B(0,r)}\left\langle \nabla f(0), \frac{u}{\|u\|}\right\rangle^2 du\,  = \frac{1}{2} \|\nabla f(0)\|^2.
\end{align*}
The proof is complete.
\end{proof}

\begin{remark} The above arguments can be easily adapted to show that when $\mu_x$ and $n(x)$ are as in Proposition~\ref{sec04:prop:extensionDifussionToMetric}, then the oriented dispersion operator $T_{\mu}^+$ (\textit{cf.} Definition~\ref{def-oriented}) satisfies:
\[
T_{\mu}^+[f](x) = \frac{1}{2}\|R(x)\nabla f(x)\|^2, \qquad\text{for all } f \in \mathcal{C}^1.
\]
\end{remark}

The following proposition justifies the introduction of the oriented dispersion in a nonsmooth setting. Given a metric space $(X,d)$ we denote by $\mathrm{Lip}(X)$ the class of real-valued Lipschitz continuous functions on $X$. 

\begin{theorem}\label{sec04:thm:DispersionIsModDescent} Let $(X,d)$ be a metric space and $\mu:X\times\mathcal{B}(X)\to\mathbb{R}_+$ a measure mapping such that for every $x\in X$, $\mu_x$ is a locally finite measure with positive measure on open sets and $\beta=\{\beta_x\}_{x\in X}$ be any family of neighborhood bases. Then, the oriented dispersion operator $T^+_{\mu}$ is a descent modulus for $\mathcal{K}(X)$ and verifies that $\mathcal{K}(X)\cap\mathrm{Lip}(X) \subset \dom(T^+_{\mu})$. 
\end{theorem}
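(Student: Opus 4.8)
The plan is to verify the three axioms (D1)--(D3) of Definition~\ref{sec03:def:ModulusOfDescent} one at a time and then to establish the domain inclusion by a direct Lipschitz estimate. Throughout I take the separation map to be the metric itself, $m=d$, so that $\Delta_f^+(x,y)=(f(x)-f(y))_+/d(x,y)$ for $y\neq x$; this is the natural choice in a metric space and is exactly what makes the Lipschitz bound below effective. Two observations organize everything. First, for a fixed $x$ the assignment $f\mapsto\Delta_f^+(x,\cdot)$ is monotone for the preorder $\preceq_x$ and obeys the exact scaling $\Delta_{rf}^+(x,\cdot)=r\,\Delta_f^+(x,\cdot)$ for $r>0$. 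Second, every remaining operation applied to $\Delta_f^+(x,\cdot)$ in Definition~\ref{def-oriented}---raising to the power $p$, integrating against the positive measure $\mu_x$, multiplying by the positive factor $n(x)/\mu_x(B)$, and finally taking the $\limsup$ over the net $(\beta_x,\supseteq)$---preserves pointwise inequalities and commutes with multiplication by a positive constant.

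For (D1), if $x\in\argmin f$ then $(f(x)-f(y))_+=0$ for every $y$, hence $\Delta_f^+(x,\cdot)\equiv 0$, every normalized integral vanishes, and $T_\mu^+[f](x)=0$, i.e. $x\in\mathcal{Z}_{T_\mu^+}(f)$. For (D2), if $g\preceq_x f$, then dividing $(g(x)-g(z))_+\le (f(x)-f(z))_+$ by $d(x,y)>0$ gives $0\le\Delta_g^+(x,y)\le\Delta_f^+(x,y)$ for every $y$; since $t\mapsto t^p$ is nondecreasing on $\mathbb{R}_+$, this pointwise inequality passes to the $p$-th powers, then to the integrals over each $B\in\beta_x$ (here I use $\mu_x(B)>0$, valid because $B$ contains an open set and $\mu_x$ charges open sets), and finally to the $\limsup$, whence $T_\mu^+[f](x)\ge T_\mu^+[g](x)$. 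For (D3), I would show that $T_\mu^+$ is in fact $p$-homogeneous: from $(rf(x)-rf(y))_+=r\,(f(x)-f(y))_+$ for $r>0$ one gets $[\Delta_{rf}^+(x,y)]^p=r^p\,[\Delta_f^+(x,y)]^p$, so each normalized integral is multiplied by the constant $r^p$ and therefore $T_\mu^+[rf](x)=r^p\,T_\mu^+[f](x)$. As noted after Definition~\ref{def-homog}, $p$-homogeneity entails scalar-monotonicity, so (D3) holds and $T_\mu^+$ is a descent modulus for $\mathcal{K}(X)$.

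For the domain inclusion, let $f\in\mathcal{K}(X)\cap\mathrm{Lip}(X)$ be $L$-Lipschitz and fix $x\in X$. From $(f(x)-f(y))_+\le|f(x)-f(y)|\le L\,d(x,y)$ I obtain the uniform bound $\Delta_f^+(x,y)\le L$, hence $[\Delta_f^+(x,y)]^p\le L^p$ for all $y$. Since $\mu_x$ is locally finite, there is a neighborhood $U$ of $x$ with $\mu_x(U)<+\infty$; as $\beta_x$ is a neighborhood base at $x$, some $B_0\in\beta_x$ satisfies $B_0\subset U$, so that $0<\mu_x(B_0)\le\mu_x(U)<+\infty$. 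For every $B\in\beta_x$ with $B\subset B_0$ one then has $0<\mu_x(B)<+\infty$ and
\[
\frac{n(x)}{\mu_x(B)}\int_{B}[\Delta_f^+(x,y)]^p\,\mu_x(dy)\ \le\ \frac{n(x)}{\mu_x(B)}\,L^p\,\mu_x(B)\ =\ n(x)\,L^p.
\]
Because the $\limsup$ over $(\beta_x,\supseteq)$ is bounded above by the supremum of these normalized integrals over $\{B\in\beta_x:\ B\subset B_0\}$, I conclude $T_\mu^+[f](x)\le n(x)\,L^p<+\infty$. As $x$ was arbitrary, $f\in\dom(T_\mu^+)$, proving $\mathcal{K}(X)\cap\mathrm{Lip}(X)\subset\dom(T_\mu^+)$.

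The argument is essentially bookkeeping, and I do not expect a genuine obstacle. The two points deserving care are the identification $m=d$ (for a separation map much smaller than $d$ near the diagonal the quotient $\Delta_f^+$ would be unbounded and the domain claim would fail, so this choice is essential), and the handling of the $\limsup$ over the directed net $(\beta_x,\supseteq)$ together with the local finiteness and open-set positivity of $\mu_x$, which together guarantee that the normalizing factor $n(x)/\mu_x(B)$ is a well-defined positive real number for all sufficiently small $B$. Note that neither compactness nor coercivity of $f$ is used here; coercivity enters only afterwards, through the determination results of Subsection~\ref{sec03-02:GeneralDetermination}, which now become applicable to $T_\mu^+$.
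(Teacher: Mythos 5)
Your proposal is correct and follows the same route as the paper's proof: the paper verifies monotonicity exactly as you do (the pointwise inequality on $\Delta_f^+$ passes through the powers, integrals and $\limsup$) and dismisses (D1), (D3) and the domain inclusion as immediate, which you simply spell out — (D3) via $p$-homogeneity and the domain via the uniform bound $\Delta_f^+\leq L$ on a cofinal family of $B\in\beta_x$ of finite positive measure. Your added remarks on why $m=d$ is the operative choice and on handling the net $(\beta_x,\supseteq)$ are accurate and consistent with the paper's conventions.
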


\begin{proof}
The conditions over $\mu$ ensure that $\mathcal{K}(X)\cap\mathrm{Lip}(X)\subset \dom(T^+_{\mu})$. Clearly the operator $T^+_{\mu}$ preserves global minima and is scalar-monotone. Let us now show that it is monotone.
Let $f,g\in\mathcal{K}(X)$ and let $x\in X$ such that
\[
(f(x) - f(z))_+ \geq (g(x) - g(z))_+.
\]
Then, $\Delta_f^+(x,z) \geq \Delta_g^+(x,z)$ for all $z\in X$ and the conclusion follows.
\end{proof}

\begin{remark} The measure map $\mu:X\times\mathcal{B}(X)\to\mathbb{R}_+$ is assumed to be locally finite, which yields in particular that each measure $\mu_x$ is finite on the compact sets of $(X,\tau)$. 
Apart from this assumption and the existence of a neighborhood system $\{\beta_x \}_{x\in X}$ where $\mu_x$ takes nonzero values, no other property is required. In this setting, the superior limit in Definition~\ref{sec04:def:ExtendendedDiffusionOp} and Definition~\ref{def-oriented} are well-defined, yielding that the dispersion operators are descent moduli. Even less will be required to define nonlocal operators (see next section), namely, $\mu_x$ to be finite on compact sets.
\end{remark}

\subsection{Oriented nonlocal operators}

Apart from the diffusion operators, which are of local nature, one can also consider nonlocal operators. These latter serve to model jump dynamics, see \textit{e.g.} \cite{EK1986}. We shall now define dispersion measures for these processes.

\begin{definition}[Nonlocal dispersion operators]\label{sec04:def:nonlocalDispersion} Let $\mu:X\times\mathcal{B}(X)\to\mathbb{R}_+$ be a  measure mapping such that for every $x\in X$, $\mu_x$ is finite on all compact sets, and let $\phi:\mathbb{R}_+\to\mathbb{R}_+$ be a strictly increasing function with $\phi(0) = 0$. We define the nonlocal dispersion operator induced by $\phi$ and $\mu$ as
\[
T_{\phi,\mu}[f](x) = \int_X \phi(|f(x) - f(y)|)\mu(x,dy), 
\]
\end{definition}

%If $\mu_x$ is a Radon measure, 
By construction, $T_{\phi,\mu}$ is finite for every measurable bounded function with compact support. When $X=V$ is a finite space, the nonlocal operators are particularly relevant, due to the fact that all points are isolated and so diffusion is not possible. In this setting, the measure map $\mu$ can be represented by a matrix $L:V\times V\to\mathbb{R}_+$, in the form of

\[
T_{\phi,\mu}[f](x) = \sum_{y\in V} L(x,y) \,\phi(|f(x) - f(y)|)
\]
\begin{remark}
In the context of Markov generators, the nonlocal operators are of the form $$L[f](x) = \int_X (f(y) - f(x))\mu_x(dy),$$ where $\mu$ is assumed to be regular in the sense that $x\mapsto \mu_x(A)$ is measurable for every $A\in\mathcal{B}(X)$. When 
$\phi(t) = t^2$ we are working with the dispersion operator $T_{\phi,\mu}[f] := \Gamma[f]$, where $\Gamma$ is the carr\'e-du-champ operator associated to $L$, which in all generally is defined by the identity $\Gamma[f] = L[f^2] - 2fL[f]$ (as soon as $f,f^2\in \dom (L)$, see e.g. \cite{BGL2014}).
\end{remark} 

In general, a nonlocal operator $T_{\phi,\mu}$ might fail to be a descent modulus and to determine functions in the sense of Theorem~\ref{sec03:thm:Determination}.  

\begin{example}\label{example:CarreFailsDetermination} Let $\phi(t):= t^2$. Fix $N\in\mathbb{N}$ even, and set $\mathcal{V}:= \mathbb{Z}_{N}\cup\{\bar{0}\}$, where $\bar{0}\notin \mathbb{Z}_N$ and $\mathbb{Z}_N =\{0,1,\ldots, N-1\}$ stands for the usual cyclic additive group modulo $N$. We define a nonlocal operator $L$ as follows:
\[
L(x,y) = \begin{cases}
\phantom{jo}1/2,\qquad & \text{ if }x\in \mathbb{Z}_N\setminus\{0\}\text{ and }y = x \pm 1,\\
\phantom{jo}1/3,\qquad &\text{ if }x = 0\text{ and }y\in\{\bar{0},1,-1\},\\
\phantom{jo}1,\qquad &\text{ if }x=\bar{0} \text{ and }y = 0,\\
\phantom{jo}0,&\text{ otherwise.}
\end{cases}
\]
\begin{figure}[h]
	\fontsize{6pt}{6pt}\selectfont
	\centering
	\begin{tikzpicture}[->,>=stealth',shorten >=0pt,auto,node distance=2cm,
		semithick]
		
%		\tikzstyle{every state}=[text=black,,font = \bfseries, fill opacity = 0.35, text opacity = 1]
		\node[state] 		 (A){ $\bar{0}$};
		\node[state] 		 (B) [right of=A] {$0$};
		\node[state] 		 (C) [above right of=B] {$1$};
		\node[state] 		 (D) [right of =C] {$2$};
		\node[state]         (E) [below right of=D] {$3$};	
		\node[state]         (F) [below left of=E] 	{$4$};
		\node[state]         (G) [left of=F] {$5$};

		\path (A) [bend right=20] edge node[below]{$1$} (B) 
		(B) edge node[above]{$\tfrac{1}{3}$} (A)
		(B) edge node[below]{$\tfrac{1}{3}$} (C)
		(C) edge node[above]{$\tfrac{1}{2}$} (B)
		(C) edge node[below]{$\tfrac{1}{2}$} (D)
		(D) edge node[above]{$\tfrac{1}{2}$} (C)
		(D) edge node[below]{$\tfrac{1}{2}$} (E)
		(E) edge node[above]{$\tfrac{1}{2}$} (D)
		(E) edge node[above]{$\tfrac{1}{2}$} (F)
		(F) edge node[below]{$\tfrac{1}{2}$} (E)
		(F) edge node[above]{$\tfrac{1}{2}$} (G)
		(G) edge node[below]{$\tfrac{1}{2}$} (F)
		(G) edge node[above]{$\tfrac{1}{2}$} (B)
		(B) edge node[below]{$\tfrac{1}{3}$} (G)
		;
%		(H) edge  [bend right=20] node{} (P)

	\end{tikzpicture}
	\caption{Case $N=6$.}
	\label{fig:ExampleZN}
\end{figure}
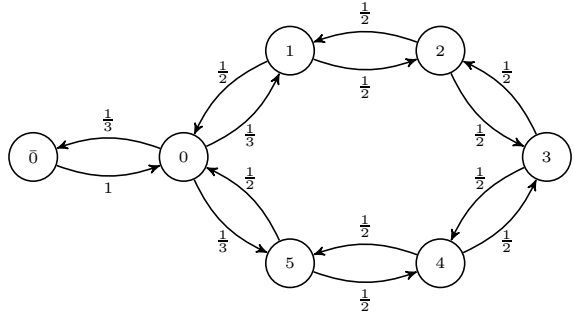
Now, choose two functions $f_1,f_2\in\mathbb{R}^\mathcal{V}$ satisfying that
\[
f_i(\bar{0}) = f_i(0) = 0\quad\text{ and }\quad |f(x\pm 1) - f(x)| = 1,\forall x\in \mathbb{Z}_N.
\]
There is at least ${ N\choose N/2} > 1$ functions verifying the above requirements, so we can take $f_1\neq f_2$. However, it is not hard to see that for the measure map $\mu$ associated with $L$, the nonlocal operator $T_{\mu}$ verifies that
\[
T_{\mu}[f_i] (x) = \begin{cases}
0\quad&\text{ if }x=\bar{0},\\
2/3\quad&\text{ if }x= 0,\\
1&\text{ otherwise.}
\end{cases}
\]
for $i=1,2$. Thus, $T_{\mu}$ does not preserve the global minima since either $\argmin f_i \supseteq \{ 0,\bar{0}\}$ or $\argmin f_i \subset \mathcal{V}\setminus \{ 0,\bar{0}\}$. Furthermore, $T_{\mu}$ fails the determination theorem even for functions with $\mathcal{Z}_{T_{\mu}}(f)\neq \emptyset$.
%\begin{flushright}
\hfill$\Diamond$
%\end{flushright}
\end{example}
\smallskip

Example~\ref{example:CarreFailsDetermination} is very illustrative as concerns the following: when $\phi(t) = t^2$, nonlocal operators do not preserve global minima in general. Indeed, if $\mathcal{V}$ is a finite state space, $T_{\mu}[f](x)$ measures the dispersion around point $x\in \mathcal{V}$, when $L(x,y)$ represents the probability to jump from the point~$x$ to the point~$y$. Therefore it is natural for $T_{\mu}[f](x)$ to be strictly positive. However, by imposing $f(0)=f(\bar 0)$ in Example~\ref{example:CarreFailsDetermination} we are forcing a point with no dispersion: starting from $x=\bar{0}$, the only possibility is to jump to~$0$.

\begin{definition}[Oriented nonlocal operators]\label{sec04:def:OrientedNonlocal}
Let $\phi$ and $\mu$ be as in Definition~\ref{sec04:def:nonlocalDispersion}. We define the oriented nonlocal operator induced by $\phi$ and  $\mu$ as
\begin{equation}
    T_{\phi,\mu}^+[f](x) = \int_{[f\leq f(x)]} \phi(f(x) - f(y))\mu_x(dy) = \int_{X} \phi((f(x) - f(y))_+)\mu_x(dy)
\end{equation}
\end{definition}

Similarly to the (local) oriented dispersion operator, the above operator is a descent modulus for $\mathcal{K}(X)$, and always determines a suitable subclass of continuous coercive functions $\mathcal{K}(X)$. In the nonlocal case, we do not need to assume Lipschitz continuity, and consequently, $X$ can be a mere topological space. The class is given by the \textit{strictly coercive} functions, which is given by
\begin{equation}\label{eq:StrictlyCoercive}
\mathcal{K}_s(X) = \left\{ f:X\to \mathbb{R}\ :\ \forall x\in X,\, [f\leq f(x)]\text{ is compact} \right\}.
\end{equation}
The main difference between $\mathcal{K}_s(X)$  and $\mathcal{K}(X)$ is that the latter class admits functions attaining their maximum value since the set $[f \leq \max_X f]$ does not have to be compact. If $X$ is compact, then the classes $\mathcal{K}(X)$  and $\mathcal{K}_s(X)$ coincide, however, if $X$ is noncompact, then functions in $\mathcal{K}_s(X)$ cannot attain their supremum.

\begin{theorem}\label{thm:CarreNonlocalDetermining} Let $\mu:X\times\mathcal{B}(X)\to\mathbb{R}_+$ be a measure mapping such that $\mu_x$ is finite on all compact sets, for every $x\in X$. Then the oriented nonlocal operator $T_{\phi,\mu}^+$ is a descent modulus for $\mathcal{K}(X)$ and verifies that $\mathcal{K}_s(X)\subset \dom(T_{\phi,\mu}^+)$.
\end{theorem}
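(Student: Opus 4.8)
The plan is to check the three axioms (D1)--(D3) of Definition~\ref{sec03:def:ModulusOfDescent} for
\[
T_{\phi,\mu}^+[f](x)=\int_X \phi\big((f(x)-f(y))_+\big)\,\mu_x(dy)
\]
one at a time, and then to settle the domain inclusion by a compactness argument, following the pattern already used for the oriented dispersion operator in Theorem~\ref{sec04:thm:DispersionIsModDescent}. Before anything else I would record that the integrand is Borel measurable, being the composite of the monotone (hence Borel) map $\phi$ with the continuous map $y\mapsto (f(x)-f(y))_+$, so that every integral below is well defined with values in $\overline{\mathbb{R}}_+$.

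Axioms (D1) and (D2) should require no real work. For (D1), if $x\in\argmin f$ then $(f(x)-f(y))_+\equiv 0$, so $\phi(0)=0$ makes the integrand vanish and $T_{\phi,\mu}^+[f](x)=0$. For (D2), the hypothesis $(f(x)-f(z))_+\ge (g(x)-g(z))_+$ for all $z$ passes through the nondecreasing map $\phi$ pointwise and then through integration against the positive measure $\mu_x$, giving $T_{\phi,\mu}^+[f](x)\ge T_{\phi,\mu}^+[g](x)$.

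I expect the only delicate point to be scalar-monotonicity (D3), which is precisely where the \emph{strict} monotonicity of $\phi$ must be exploited. Fixing $r>1$ and $f$ with $0<T_{\phi,\mu}^+[f](x)<+\infty$, I would use $(rf(x)-rf(z))_+=r\,(f(x)-f(z))_+$ to rewrite the difference $T_{\phi,\mu}^+[rf](x)-T_{\phi,\mu}^+[f](x)$ as a single integral over $A:=[f<f(x)]$ (the set off which both integrands equal $\phi(0)=0$); the subtraction is legitimate thanks to the finiteness assumption. On $A$ one has $r(f(x)-f(z))_+>(f(x)-f(z))_+>0$, so strict monotonicity of $\phi$ makes the integrand strictly positive there, while $T_{\phi,\mu}^+[f](x)>0$ forces $\mu_x(A)>0$. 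Hence the integral is strictly positive and $T_{\phi,\mu}^+[rf](x)>T_{\phi,\mu}^+[f](x)$. The subtlety to handle cleanly is exactly this passage from a pointwise strict inequality on $A$ to a strict inequality of integrals, which is why one must first argue that $A$ carries positive $\mu_x$-mass.

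Finally, for the inclusion $\mathcal{K}_s(X)\subset\dom(T_{\phi,\mu}^+)$ I would use that, for $f\in\mathcal{K}_s(X)$ and $x\in X$, the sublevel set $K:=[f\le f(x)]$ is compact by definition, so the continuous function $f$ attains a minimum $m$ on $K$; then $0\le f(x)-f(y)\le f(x)-m$ on $K$ and monotonicity of $\phi$ bounds the integrand by $\phi(f(x)-m)$. Since the integrand is supported on $K$ and $\mu_x$ is finite on compact sets, this yields $T_{\phi,\mu}^+[f](x)\le \phi(f(x)-m)\,\mu_x(K)<+\infty$. The whole argument hinges on the compactness built into $\mathcal{K}_s(X)$ together with $\mu_x$ being finite on compact sets; no positivity of $\mu_x$ on open sets (needed in the local case of Theorem~\ref{sec04:thm:DispersionIsModDescent}) is required here, which is exactly why the nonlocal operator applies to the larger class $\mathcal{K}_s(X)$ over a mere topological space.
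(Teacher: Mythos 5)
Your proposal is correct and follows essentially the same route as the paper: (D1) and (D2) are immediate from $\phi(0)=0$ and the monotonicity of $\phi$, the domain inclusion is the compactness bound you describe, and for (D3) the paper's explicit increasing sets $A_n$ (on which both $f(x)-f(z)\geq 1/n$ and $\phi(r(f(x)-f(z)))-\phi(f(x)-f(z))\geq 1/n$) together with monotone convergence are exactly the ``clean handling'' of the step you flag, namely passing from a strictly positive integrand on the positive-measure set $[f<f(x)]$ to a strict inequality of integrals. No gap.
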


\begin{proof} 
Since $\mu_x$ is finite on all compact sets, for each $x\in X$, we deduce that $\mathcal{K}_s(X)\subset \dom(T_{\phi,\mu}^+)$. Furthermore, since $\phi(0) = 0$, it is clear that $T_{\phi,\mu}^+$ preserves global minima. 

Let us show now that $T_{\phi,\mu}^+$ is monotone: let $f,g\in \mathcal{K}(X)$ and $x\in X$ such that
\[
(f(x)-f(z))_+\geq (g(x) - g(z))_+,\quad \forall z\in X. 
\]
Then, since $\phi$ is non-decreasing, we have that
\[
T_{\phi,\mu}^+[f](x) = \int_X\phi( (f(x)-f(z))_+ )\mu_x(dz) \geq \int_X\phi( (g(x)-g(z))_+ )\mu_x(dz) = T_{\phi,\mu}^+[g](x).
\]
We conclude that $T_{\phi,\mu}^+$ is monotone.

Finally, let us show that $T_{\phi,\mu}^+$ is scalar-monotone. Let $f\in \mathcal{K}(X)$, let $r>1$ and let $x\in X$ such that $0<T[f](x)<+\infty$. By monotonicity, we have that $T_{\phi,\mu}^+[rf](x) \geq T_{\phi,\mu}^+[f](x)>0$. Let us now define the sets
\[
A_{n} = \left\{ z\in X: \, f(x)- f(z) \geq \frac{1}{n}  \right \}\, \bigcap\, \left \{ z\in X\ :\ \phi(r(f(x)- f(z))) - \phi(f(x)- f(z)) \geq  \frac{1}{n}  \right \}.
\]
Clearly $\{A_n\}_n$ is an increasing sequence of $\mu_x$-measurable sets satisfying: $$\bigcup_{n\geq 1} A_n = [f<f(x)].$$ Thus, by monotone convergence theorem, we have that
\[
\lim_{n}  \int_{A_n} \phi(f(x) - f(z))\mu_x(dz) = \int_{[f<f(x)]}\phi(f(x) - f(z))\mu_x(dz) = T_{\phi,\mu}^+[f](x)>0.
\]
Choose then $n\in\mathbb{N}$ such that  $\int_{A_n} \phi(f(x) - f(z))\mu_x(dz)> 0$. Then, $\mu(x,A_n)>0$ and
\begin{align*}
 T_{\phi,\mu}^+[rf](x) &=  \int_X  \phi(r(f(x) - f(z))_+)\mu_x(dz)\\
 &= \int_{A_{n}}  \phi(r(f(x) - f(z)))\mu_x(dz) + \int_{X\setminus A_n}  \phi(r(f(x) - f(z))_+)\mu_x(dz)\\
 &\geq \int_{A_{n}}  \phi((f(x) - f(z))) + \frac{1}{n}\mu_x(dz) + \int_{X\setminus A_n}  \phi((f(x) - f(z))_+)\mu(x,dz)\\
 &= \int_X  \phi((f(x) - f(z))_+)\mu(x,dz) +\frac{1}{n}\mu(A_n)
 > T_{\phi,\mu}^+[f](x).
\end{align*}

All three properties of Definition~\ref{sec03:def:ModulusOfDescent} are satisfied and the proof is complete.
\end{proof}

\begin{remark}
Any $\Gamma$-operator (carr\'e-du-champ operator) coming from a regular Markov generator in $\mathbb{R}^n$ (with the euclidean distance) has the form
\[
\Gamma[f](x) = \limsup_{r\to 0} \frac{n(x)}{\mu_{1,x}(B(x,r))} \int_{B(x,r)}\left[ \Delta_f(x,y) \right]^2 \mu_{1,x}(dy) + \int_X (f(x) - f(y))^2\mu_{2,x}(dy).
\]
The above operator measures the dispersion of the function $f$ around a point $x$, when the point evolves following a local diffusion process linked to $(\mu_{1,x})_x$ and a nonlocal jump process given by $(\mu_{2,x})_x$. The oriented dispersion is only taking into account the descent directions and has the form
\[
\Gamma^+[f](x) = \limsup_{\varepsilon\to 0} \frac{n(x)}{\mu_{1,x}(B(x,\varepsilon))} \int_{B(x,\varepsilon)}\left[ \Delta_f^+(x,y) \right]^2 \mu_{1,x}(dy) + \int_X [(f(x) - f(y))_+]^2\mu_{2,x}(dy)
\]
The above oriented operator is a descent modulus for $\mathcal{K}(X)$. Thus, if we know the oriented dispersion of a continuous coercive function $f$ (with finite oriented dispersion), and we know its values on the critical points (that is, points with zero oriented dispersion), we completely determine the function $f$, in the spirit of Theorem~\ref{sec03:thm:Determination}. 
\end{remark}

%\newpage

\section{Descent moduli over finite sets}\label{sec:5}

Finite state spaces provide a simple and experimental framework to
investigate further properties of moduli of descent. We shall use the
terminology \textquotedblleft \textit{finite descent modulus}" to refer to a
descent modulus over a finite set. In this section we study two particular
features of finite descent moduli:

\begin{itemize}
\item an alternative proof, based on a probabilistic approach, of (an
enhanced version of) the determination theorem for descent moduli mimicking
Markov generators; and

\item a characterization of \textit{homogeneous} finite descent moduli, up
to a natural equivalence relation based on the corresponding critical map.
\end{itemize}

\medskip We have already encountered a finite descent moduli in Example~\ref{example:CarreFailsDetermination}. Let us present a general procedure generating finite descent moduli: on a finite state space $\mathcal{V}$ (neither empty nor a
singleton), consider a Markov generator $L:=(L(x,y)_{x,y\in \mathcal{V}}$,
namely a matrix satisfying 
\begin{equation*}
\left\{ 
\begin{array}{lcl}
\forall x,y\in \mathcal{V}:\,\,\quad x\neq y\,\,\Longrightarrow \,\,L(x,y) & \geq & 0 \\
[2mm] 
\forall x\in \mathcal{V}:\qquad \phantom{salas}\sum_{y\in \mathcal{V}}L(x,y) & = & 0
\end{array}
\right.
\end{equation*}

Such a generator acts linearly on any function $f\in \mathbb{R}^{\mathcal{V}}$ (which coincides with $\mathcal{K}(\mathcal{V})$)  via 
\begin{equation}
\forall \ x\in \mathcal{V},\qquad L[f](x)=\sum_{y\in \mathcal{V}}L(x,y)(f(y)-f(x))  \label{eq:star}
\end{equation}

By analogy to Definition~\ref{def-oriented} and Definition~\ref{sec04:def:OrientedNonlocal}, we consider the non-linear operator $T_{L}$ acting on any function $f\in \mathbb{R}^{\mathcal{V}}$ via 
\begin{equation}\label{TL}
\forall \ x\in \mathcal{V},\qquad T_{L}[f](x)=\sum_{y\in \mathcal{V}}L(x,y)(f(x)-f(y))_{+}
\end{equation}

From Theorem \ref{thm:CarreNonlocalDetermining}, $T_{L}$ is a descent modulus. In Subsection~\ref{apa}, we will recover the determination theorem for this kind of
descent modulus via a probabilistic approach.\smallskip

More generally, for any $m>0$, one can consider $T_{L,m}$ given by 
\begin{equation}
\forall \ x\in \mathcal{V},\qquad T_{L,m}[f](x)=\left( \sum_{y\in \mathcal{V}}L(x,y)((f(x)-f(y))_{+})^{m}\right) ^{1/m}  \label{eq:TLp}
\end{equation}
as well as its limit $T_{L,\infty }$ as $m$ goes to infinity: 
\begin{equation}
\forall \ x\in \mathcal{V},\qquad T_{L,\infty }[f](x)=\max
\{(f(x)-f(y))_{+}\,:\,y\in D_{x}\}  \label{eq:TLi}
\end{equation}
where for every $x\in \mathcal{V}$ we set:
\begin{equation}\label{eq:act}
D_{x}:=\{x\}\sqcup \{y\in \mathcal{V}\,:\,L(x,y)>0\}
\end{equation}

Let us recall (see Definition~\ref{def-homog} for $p=1$) that a descent
modulus $T$ is said to be \textit{homogeneous} (or 1--\textit{homogeneous})
if for all $r\geq 0$ and $f\in \mathbb{R}^{\mathcal{V}}$ we have: $T[rf]=rT[f]$.\smallskip\newline
All the above operators $T_{L,m}$, $m\in (0,+\infty ],$ are homogeneous
descent moduli. It should be noticed that there are many more homogeneous
descent moduli: for instance in \eqref{eq:TLp} we can allow the exponent $m$
to depend on $x\in \mathcal{V}$. Moreover, given $n$ homogeneous descent
moduli $T_{1}$, ..., $T_{n}$, and positive numbers $a_{1},...,a_{n}>0$, the
weighted sum $a_{1}T_{1}+\cdots +a_{n}T_{n}$ is again a homogeneous descent
moduli. Even fancier constructions are possible. This being said, there
exist non-homogeneous descent moduli. Indeed, for any non-decreasing mapping $\phi \,:\,\mathbb{R}_{+}\rightarrow \mathbb{R}_{+}$ with $\phi (0)=0$, the
descent modulus $T_{\phi }$ defined by 
\begin{equation*}
\forall \ x\in \mathcal{V},\qquad T_{L}[f](x)=\sum_{y\in \mathcal{V}
}L(x,y)\phi ((f(x)-f(y))_{+})
\end{equation*}
is homogeneous if and only if $\phi $ is linear, as long as $L\neq 0$
.\smallskip

Given a descent modulus $T$ we recall from (\ref{eq:ZTf}) the \textit{critical map} $\mathcal{Z}_{T}$, which associates to every function $f\in \mathbb{R}^{\mathcal{V}}$ its set of critical points $\mathcal{Z}_{T}(f)=(T[f])^{-1}(0)$. Notice that the critical maps $\mathcal{Z}_{T_{L,m}}$ related to the
moduli $T_{L,m}$ in \eqref{eq:TLp}--\eqref{eq:TLi} are all the same as $m$
varies in $(0,+\infty ]$.\smallskip\ 

In Subsection~\ref{cm} we introduce an equivalence relation among
homogeneous descent moduli, using the critical maps. Under this relation,
all moduli $T_{L,m}$ in \eqref{eq:TLp} turn out to be equivalent to each
other (for different values of $m\in \mathbb{N}$) and also equivalent to $T_{L,\infty }.$ The main result of this section is to show that every
homogeneous descent modulus on a general finite set $\mathcal{V}$ (without
generator $L$) is still of the form \eqref{eq:TLi} for some family $\mathcal{D}=\{\mathcal{D}_{x}\}$ which is naturally associated to~$T$, provided it
satisfies a (necessary and sufficient) mild condition.

\subsection{A probabilistic approach}

\label{apa}

Let $L:=(L(x,y))_{x,y\in \mathcal{V}}$ be a Markov generator on the finite
set $\mathcal{V}$.

\smallskip For every $f\in \mathbb{R}^{\mathcal{V}}$ the associated $f$-oriented Markov generator $L^{f}:=(L^{f}(x,y))_{x,y\in \mathcal{V}}$ is
defined for $x,y\in \mathcal{V}$ with $x\neq y$ as follows:
\begin{equation*}
L^{f}(x,y):=\left\{ 
\begin{array}{ll}
L(x,y), & \hbox{if $f(y)\leq f(x)$} \\[2mm]
\phantom{dav}0, & \hbox{otherwise.}
\end{array}
\right. 
\end{equation*}
The values $L^{f}(x,x)$ on the diagonal are determined by the fact
that the sum of the rows $\sum_{x\in\mathcal{V}}L(x,y)$ should  vanish.\smallskip 

Let $T:\mathbb{R}^{\mathcal{V}}\rightarrow \mathbb{R}^{\mathcal{V}}$ be
defined for every $f\in \mathbb{R}^{\mathcal{V}}\ $and $x\in \mathcal{V}$ as
follows:
\begin{equation*}
T[f](x):=-L^{f}[f](x)=-\sum_{y\in \mathcal{V}}L^{f}(x,y)(f(y)-f(x))=\sum_{y\in \mathcal{V}}L(x,y)(f(x)-f(y))_{+}.
\end{equation*}
This non-linear operator $T$ coincides with $T_L$ defined in \eqref{TL} and is a descent modulus. For every $f\in \mathbb{R}^{\mathcal{V}}$ the set of $T$-critical points is given by
the formula
\begin{equation}
\mathcal{Z}_{T}(f):=\{x\in \mathcal{V}\,:\,T[f](x)=0\}.  \label{eq:ZT}
\end{equation}

Given $x,y\in \mathcal{V},$ an $L$-path from $x$ to $y$ is a finite sequence 
$\{x_{k}\}_{0\leq k\leq N}$ with $N\geq 0$, $x_{0}=x$, $x_{N}=y$ and such
that for all $0\leq k<N$, $L(x_{k},x_{k+1})>0$. This path is called an $L^{f}
$-path from $x$ to $y$ if in addition $\{f(x_{k})\}_{0\leq k\leq N}$ is a
non-increasing finite sequence. We write $x\overset{f}{\rightarrow }y$ to
indicate that there exists a $L^{f}$-path from $x$ to $y.$ We set: 
\begin{equation*}
x\succeq _{f}y\,\Longleftrightarrow \,x\overset{f}{\rightarrow }y\qquad 
\text{and}\qquad x\approx _{f}y\,\Longleftrightarrow \,\left\{ 
\begin{array}{c}
x\overset{f}{\rightarrow }y \\ 
y\overset{f}{\rightarrow }x
\end{array}
\right. 
\end{equation*}
It is straighforward to check that $\succeq _{f}$ is an order relation on $\mathcal{V}$ and $\approx _{f}$ is its corresponding equivalence relation ($x\approx _{f}y$ if and only if $x\succeq _{f}y$ and $y\succeq _{f}x$). The
set of minima of $\succeq _{f}$ is defined as follows:
\begin{equation}\label{eq:M}
M(f):=\left\{ \bar{x}\in \mathcal{V}:\;\,\forall x\in \mathcal{V},\;\left( 
\bar{x}\succeq _{f}x\,\Rightarrow \bar{x}\approx _{f}x\right) \right\} .
\end{equation}
Notice that $\bar{x}\in M(f)$ if and only if for any $y\in \mathcal{V}$ with 
$f(y)<f(\bar{x})$ and any $L$-path $\{x_{k}\}_{0\leq k\leq N}$ from $x$ to $y
$, we have $\max_{0\leq k\leq N}f(x_{k})>f(\bar{x})$. Moreover, we always
have $M(f)\subset \mathcal{Z}_{T}(f)$ and the inclusion may be strict.

\begin{example}
	Let $\mathcal{V} = \mathbb{Z}_9$, and set $L$ such that 
	\[
	L(x,y) >0 \iff y=x\pm 1.
	\] 
	Consider $f = (1,0,0,1,2,1,1,2,1)$. The set $\mathcal{V}$, its connections through $L$ and the level sets of $f$ are depicted in Figure \ref{fig:ExampleLF}.
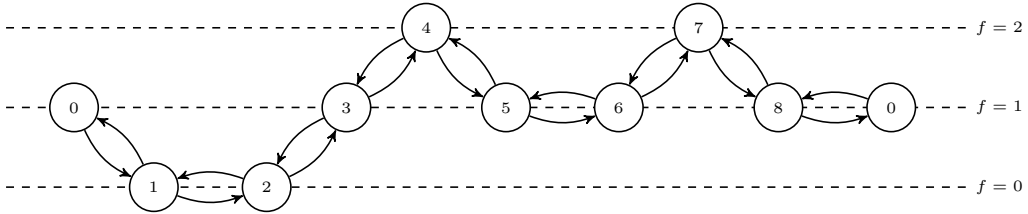
\begin{figure}[h]
	\fontsize{6pt}{6pt}\selectfont
	\centering
	\begin{tikzpicture}[->,>=stealth',shorten >=0pt,auto,node distance=1.5cm,
		semithick]
		
		%		\tikzstyle{every state}=[text=black,,font = \bfseries, fill opacity = 0.35, text opacity = 1]
		
		\node (L1) {};
		\node (L2)  [node distance = 1.06 cm, above of = L1]{};
		\node (L0)  [node distance = 1.06 cm, below of = L1]{};
		
		\node (L1end) [node distance = 13.3cm,right of = L1]{$f = 1$};
		\node (L2end)  [node distance = 13.3cm,right of = L2]{$f = 2$};
		\node (L0end)  [node distance = 13.3cm,right of = L0]{$f = 0$};
		
		\path (L1) edge [-,dashed] (L1end)
		(L0) edge [-,dashed] (L0end)
		(L2) edge [-,dashed] (L2end)
		;
		 
		\node[state] 		 (A)[fill = white, right of = L1,node distance = 1cm]{ $0$};
		\node[state] 		 (B) [fill = white,below right of=A] {$1$};
		\node[state] 		 (C) [fill = white,right of=B] {$2$};
		\node[state] 		 (D) [fill = white,above right of =C] {$3$};
		\node[state]         (E) [fill = white,above right of=D] {$4$};	
		\node[state]         (F1) [fill = white,below right of=E] 	{$5$};
		\node[state]         (F2) [fill = white,right of=F1] 	{$6$};
		\node[state]         (G) [fill = white,above right of=F2] {$7$};
		\node[state]         (H) [fill = white,below right of=G] {$8$};
		\node[state]         (I) [fill = white, right of=H] {$0$};
		
		\path (A) [bend right=20] edge node{} (B) 
		(B) edge node{} (A)
		(B) edge node {}(C)
		(C) edge node{} (B)
		(C) edge node{} (D)
		(D) edge node{} (C)
		(D) edge node{} (E)
		(E) edge node{} (D)
		(E) edge node{} (F1)
		(F1) edge node{} (E)
		(F1) edge node{} (F2)
		(F2) edge node{} (F1)
		(F2) edge node{} (G)
		(G) edge node{} (F2)
		(G) edge node{} (H)
		(H) edge node{} (G)
		(H) edge node{} (I)
		(I) edge node{} (H)
		;
	\end{tikzpicture}
	\caption{Node $0$ has been replicated at the beginning and end of the representation. Only connections $(x,y)$ with $L(x,y)>0$ have been drawn. }
	\label{fig:ExampleLF}
\end{figure}

Here, $M(f) = \left\{1,2,5,6\right\}$ and $\mathcal{Z}_T(f) = \{ 1,2,5,6,8 \}$. The node $8$ is critical since $L$ does not allow to jump to any node with smaller value in one step. However, the path $8\to0\to1$ is an $L^f$-path leading to a point with smaller $f$-value. Note that $5$ and $6$ are in $M(f)$ since there is no $L^f$-path emanating from any of them and landing at a different node with smaller $f$-value.\hfill$\diamond$
\end{example}

\smallskip 

For $x\in \mathcal{V}$, let $X_{x}^{f}:=(X_{x}^{f}(t))_{t\geq 0}$ stand for
a Markov process starting from $x$ and whose generator is $L^{f}$. For such
a process, the function 
\begin{equation*}
\mathbb{R}_{+}\ni t\mapsto f(X_{x}^{f}(t))
\end{equation*}
is almost surely
% \textit{almost surely} 
non-increasing and bounded, thus converging.
Furthermore, the finite Markov process $X_{x}^{f}(t)$ is converging in law
for large $t\geq 0$ toward a distribution which may depend on the initial
point $x$ and whose support is included into the set $M(f).$\medskip 

Fix $f,g\in \mathbb{R}^{\mathcal{V}}$. Since $\mathcal{V}$ is finite, the
functions $f,g$ are trivially continuous and coercive. Therefore, Theorem~\ref{sec03:thm:Determination} directly yields:
\begin{equation}
\left. 
\begin{array}{c}
T[f]=T[g] \smallskip \\ 
f=g\text{ \ on }\mathcal{Z}_{T}(f)
\end{array}
\right\} \Longrightarrow f=g.  \label{eq:dm}
\end{equation}

In what follows, we obtain~\eqref{eq:dm} via a probabilistic approach, in a slightly enhanced version, namely replacing the
set $\mathcal{Z}_{T}(f)=\mathcal{Z}_{T}(g)$ (where $f$ and $g$ are assumed to be equal) by the
(potentially smaller) set $M(f)\cup M(g)$. The technical ingredient of the
proof is contained in the following lemma.

\begin{lemma}
\label{comp} For any $f,g\in \mathbb{R}^{\mathcal{V}}$ with $T[f]\geq T[g]$,
we have $L^{f}[g]\geq L^{f}[f]$.
\end{lemma}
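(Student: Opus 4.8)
The plan is to isolate a pointwise inequality that holds for \emph{every} pair of functions and to bring in the hypothesis $T[f]\geq T[g]$ only at the last step. The basic observation is that, for any $h\in\mathbb{R}^{\mathcal{V}}$, the definition of $L^{f}$ gives
\[
L^{f}[h](x)=\sum_{y\neq x}L(x,y)\,\mathbf{1}[f(y)\leq f(x)]\,(h(y)-h(x)),
\]
since the diagonal term vanishes (the factor $h(x)-h(x)$ is zero). Specializing to $h=f$ and using that $\mathbf{1}[f(y)\leq f(x)]\,(f(y)-f(x))=-(f(x)-f(y))_{+}$ recovers $L^{f}[f](x)=-T[f](x)$, and identically $L^{g}[g](x)=-T[g](x)$. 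Thus the claim $L^{f}[g]\geq L^{f}[f]$ is equivalent to $L^{f}[g](x)\geq -T[f](x)$ for all $x$.

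The heart of the argument is the auxiliary inequality
\[
L^{f}[g](x)\geq L^{g}[g](x),\qquad x\in\mathcal{V},
\]
which needs no relation between $f$ and $g$. First I would write the difference as
\[
L^{f}[g](x)-L^{g}[g](x)=\sum_{y\neq x}L(x,y)\big(\mathbf{1}[f(y)\leq f(x)]-\mathbf{1}[g(y)\leq g(x)]\big)(g(y)-g(x)),
\]
and then check nonnegativity term by term. The indicator difference is nonzero only in two cases: if $f(y)\leq f(x)$ but $g(y)>g(x)$, the difference equals $+1$ and $g(y)-g(x)>0$; if $f(y)>f(x)$ but $g(y)\leq g(x)$, the difference equals $-1$ and $g(y)-g(x)\leq 0$. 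In both cases the product of the indicator difference with $g(y)-g(x)$ is nonnegative, and since $L(x,y)\geq 0$ for $y\neq x$, every summand is nonnegative.

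It then remains to chain the ingredients: for every $x\in\mathcal{V}$,
\[
L^{f}[g](x)\geq L^{g}[g](x)=-T[g](x)\geq -T[f](x)=L^{f}[f](x),
\]
where the middle step uses the two identities from the first paragraph and the last inequality is just the hypothesis $T[f](x)\geq T[g](x)$ rewritten as $-T[g](x)\geq -T[f](x)$. This is exactly $L^{f}[g]\geq L^{f}[f]$. The only genuinely nontrivial point is the auxiliary inequality $L^{f}[g]\geq L^{g}[g]$; everything else is bookkeeping. Its content is a \emph{monotonicity in the orientation}: passing from the $g$-orientation to the $f$-orientation can only switch on edges $y$ along which $g$ strictly increases (raising $L[g]$) or switch off edges along which $g$ decreases (again raising $L[g]$), so reorienting never lowers the generator applied to $g$. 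Recognizing this sign correlation is the main obstacle; once it is in place, the hypothesis on $T$ enters only through the trivial substitution $-T[g]\geq -T[f]$.
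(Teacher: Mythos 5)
Your proof is correct and follows essentially the same route as the paper: your auxiliary inequality $L^{f}[g]\geq L^{g}[g]$ is precisely what the paper's two displayed estimates establish (since $T[g]=-L^{g}[g]$), with your two indicator cases corresponding exactly to the paper's steps of dropping the terms with $f(y)\leq f(x),\,g(y)>g(x)$ and then adjoining the terms with $f(y)>f(x),\,g(y)\leq g(x)$. The final chaining via $-T[g]\geq -T[f]$ is identical.
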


\begin{proof}
Indeed, for any $x\in \mathcal{V}$, we have 
\begin{eqnarray*}
-L^{f}[g](x) &=&\sum_{y\,:\,f(y)\leq f(x)}L(x,y)(g(x)-g(y))=  \\
&=&\!\!\!\!\!\sum_{y\,:\,f(y)\leq f(x),\,g(y)\leq
g(x)}\!\!\!\!\!L(x,y)(g(x)-g(y))+\!\!\!\!\!\sum_{y\,:\,f(y)\leq
f(x),\,g(y)>g(x)}\!\!\!\!\!L(x,y)(g(x)-g(y)) \\
&\leq &\sum_{y\,:\,f(y)\leq f(x),\,g(y)\leq
g(x)}L(x,y)(g(x)-g(y))\leq \sum_{y\,:\,g(y)\leq
g(x)}L(x,y)(g(x)-g(y)) \\
&=&T[g](x)\leq T[f](x)=-L^{f}[f](x).
\end{eqnarray*}

\end{proof}

We are now ready to give a probabilistic proof of the following comparison
result. (Recall from~\eqref{eq:M} the definition of $M(f)$.)

\begin{proposition}
Let $f,g\in \mathbb{R}^{\mathcal{V}}$ be two functions satisfying:  \smallskip\newline 
{\rm (i).} $T[f](x)\geq T[g](x)$, for all $x\in\mathcal{V}$ ; and \smallskip\newline  
{\rm (ii).}  $f(x)\geq g(x)$, for all $x\in M(f)$ . \medskip\newline 
Then $f\geq g$.
\end{proposition}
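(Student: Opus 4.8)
The plan is to run the $f$-oriented Markov process $X_x^f$ from an arbitrary initial point $x\in\mathcal{V}$ and to track the expected value of the difference $h:=g-f$ along it. The two hypotheses then enter at two distinct places: hypothesis (i) controls the \emph{drift} of $h$ through Lemma~\ref{comp}, while hypothesis (ii) controls the \emph{terminal value} of $h$ through the support of the limiting law.

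First I would use that $\mathcal{V}$ is finite, so the semigroup of $X_x^f$ is simply $P_t^f=e^{tL^f}$, whence $\mathbb{E}[h(X_x^f(t))]=(P_t^f h)(x)$ and, by the Kolmogorov equation, $\tfrac{d}{dt}\,P_t^f h = P_t^f(L^f h)$. By Lemma~\ref{comp}, hypothesis (i) gives $L^f[g]\geq L^f[f]$, that is $L^f h=L^f[g]-L^f[f]\geq 0$ coordinatewise; since $L^f$ is a Markov generator, the matrix $P_t^f$ has nonnegative entries and is therefore positivity preserving, so $\tfrac{d}{dt}(P_t^f h)(x)\geq 0$. Consequently $t\mapsto \mathbb{E}[h(X_x^f(t))]$ is nondecreasing, and in particular
\[
\mathbb{E}[h(X_x^f(t))]\;\geq\; h(x)=(g-f)(x),\qquad\text{for all }t\geq 0.
\]

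Next I would pass to the limit $t\to\infty$. As recalled before the lemma, $X_x^f(t)$ converges in law to a distribution $\pi_x$ whose support is contained in $M(f)$; since $\mathcal{V}$ is finite, the bounded function $h$ is automatically continuous, so $\mathbb{E}[h(X_x^f(t))]\to \int h\,d\pi_x$. By hypothesis (ii) we have $f\geq g$ on $M(f)$, i.e. $h\leq 0$ on $\mathrm{supp}(\pi_x)$, and hence $\int h\,d\pi_x\leq 0$. Combining this with the monotonicity inequality above yields $(g-f)(x)\leq \int h\,d\pi_x\leq 0$, that is $f(x)\geq g(x)$. As $x\in\mathcal{V}$ was arbitrary, $f\geq g$.

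The genuinely routine steps (the Kolmogorov equation and the positivity of $P_t^f$) are immediate on a finite state space, so the main substantive input is the localization of the limiting law on $M(f)$: this is exactly the ergodic statement recorded before the lemma, namely that $f$ is nonincreasing along trajectories of $L^f$, so the process is eventually trapped in the $\approx_f$-recurrent classes, which are precisely the minima $M(f)$. Beyond that, the main point to get right is the bookkeeping of inequality directions: Lemma~\ref{comp} forces $\mathbb{E}[(g-f)(X_x^f(t))]$ to \emph{increase} along the flow, whereas the terminal condition forces its limit to be \emph{nonpositive}, and these two facts squeeze $(g-f)(x)$ below~$0$.
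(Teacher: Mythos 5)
Your proof is correct and follows essentially the same route as the paper's: both run the $f$-oriented process $X_x^f$, use Lemma~\ref{comp} to control the drift of $g-f$ under $L^f$, and use the fact that the limiting law is supported in $M(f)$ to control the terminal value. The only cosmetic difference is that you differentiate the semigroup acting on $h=g-f$ to get monotonicity of $t\mapsto\mathbb{E}[h(X_x^f(t))]$, whereas the paper writes the integrated Dynkin formula for $f$ and $g$ separately and subtracts; the two bookkeepings are equivalent.
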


\begin{proof}
Due to the martingale problem characterization of $X_{x}^{f}$ (see \cite{EK1986} e.g.), there exists a martingale $\{M_{g}^{f}(t)\}_{t\geq 0}$ starting from 0 such that 
\begin{equation*}
g(X_{x}^{f}(t))=g(x)+\int_{0}^{t}L^{f}[g](X_{x}^{f}(s))\,ds+M_{g}^{f}(t),\qquad\text{for all } t\geq 0.
\end{equation*}
Taking expectations we get 
\begin{equation}
\mathbb{E}[g(X_{x}^{f}(t))]=g(x)+\int_{0}^{t}
\mathbb{E}[L^{f}[g](X_{x}^{f}(s))]\,ds,\qquad\text{for all } t\geq 0. \label{eq:t}
\end{equation}

Denote by $\pi ^{f}$ the limit law of the distributions of $X_{x}^{f}(t)$
for large $t\geq 0$. Then $\pi ^{f}$ is supported on $M(f)$ and
\begin{equation*}
\lim_{t\rightarrow +\infty }\mathbb{E}[g(X_{x}^{f}(t))]=\pi ^{f}[g].
\end{equation*}

In particular the integral of the right hand side\ of \eqref{eq:t} converges
for large $t\geq 0$ and it holds:
\begin{equation}\label{eq:star}
\pi ^{f}[g]\,=\,g(x)\,+\int_{0}^{+\infty }\mathbb{E}[L^{f}[g](X_{x}^{f}(s))]\,ds.
\end{equation}

Applying the above arguments with $g$ replaced by $f$, we also obtain 
\begin{eqnarray}  \label{r2}
\pi^f[f]\,=\,f(x)+\int_0^{+\infty} \mathbb{E}[L^f[f](X^f_x(s))]\, ds.
\end{eqnarray}

The assumption (ii) yields $\pi ^{f}[f]\geq \pi ^{f}[g]$. On the other
hand, from Lemma~\ref{comp}, we have 
\begin{equation*}
\mathbb{E}[L^{f}[g](X_{x}^{f}(s))]\geq \mathbb{E}
[L^{f}[f](X_{x}^{f}(s))], \qquad\text{for all } s\geq 0.
\end{equation*}
Combining the above with~\eqref{eq:star} we deduce  
\begin{equation*}
\pi^{f}[f]\,\geq \,\pi^{f}[g]= g(x)+\int_{0}^{+\infty }\mathbb{E}[L^{f}[g](X_{x}^{f}(s))]\,\geq\,g(x)+\int_{0}^{+\infty }\mathbb{E}[L^{f}[f](X_{x}^{f}(s))]
\,ds.
\end{equation*}
Comparing the above inequality with~\eqref{r2} yields $f(x)\geq g(x)$ and the result follows.
\end{proof}

\bigskip 

By symmetry we obtain the following corollary: 

\begin{corollary} 
Let $f,g\in \mathbb{R}^{\mathcal{V}}$ be such that \smallskip\newline 
{\rm (i).} $T[f](x)= T[g](x)$, for all $x\in\mathcal{V}$ ; and \smallskip\newline  
{\rm (ii).} $f(x)= g(x)$, for all $x\in M(f)\cup M(g)$ . \smallskip\newline 
Then $f = g$.
\end{corollary}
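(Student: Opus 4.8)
The plan is to derive the corollary directly from the preceding Proposition by invoking it twice, once in each direction, exploiting the fact that the corollary's hypotheses are symmetric in $f$ and $g$.

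First I would unpack the equalities in the hypotheses into the one-sided inequalities that the Proposition actually requires. Hypothesis (i), namely $T[f](x)=T[g](x)$ for all $x\in\mathcal{V}$, splits into the two inequalities $T[f](x)\geq T[g](x)$ and $T[g](x)\geq T[f](x)$, valid for every $x$. Hypothesis (ii), the equality $f=g$ on $M(f)\cup M(g)$, yields in particular $f(x)\geq g(x)$ for all $x\in M(f)$ (since $M(f)\subset M(f)\cup M(g)$) and, symmetrically, $g(x)\geq f(x)$ for all $x\in M(g)$ (since $M(g)\subset M(f)\cup M(g)$).

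With these observations recorded, I would apply the Proposition to the ordered pair $(f,g)$: its hypotheses (i) and (ii) hold by the above, so we obtain $f\geq g$ on all of $\mathcal{V}$. Then I would apply the Proposition a second time to the pair $(g,f)$, with the roles of the two functions interchanged; the inequalities $T[g]\geq T[f]$ on $\mathcal{V}$ and $g\geq f$ on $M(g)$ were already noted, so the Proposition gives $g\geq f$ on $\mathcal{V}$. Combining $f\geq g$ and $g\geq f$ yields $f=g$, which is the claim.

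Since the whole analytic substance—the $f$-oriented Markov process $X^f_x$, its limit law $\pi^f$ supported on $M(f)$, and the key comparison $L^f[g]\geq L^f[f]$ from Lemma~\ref{comp}—is already encapsulated in the Proposition, there is no genuine obstacle at this stage. The only point requiring a little care is the bookkeeping of which one-sided hypothesis feeds which application: the first invocation consumes equality on $M(f)$, the second consumes equality on $M(g)$, and it is precisely the use of the \emph{union} $M(f)\cup M(g)$ in hypothesis (ii) that makes both invocations legitimate. This is also what makes the corollary the natural symmetric determination statement, with a determining set $M(f)\cup M(g)$ that may be strictly smaller than $\mathcal{Z}_T(f)$, as observed before the Proposition.
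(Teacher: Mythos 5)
Your proposal is correct and matches the paper's intended argument: the paper derives the corollary from the preceding proposition simply ``by symmetry,'' which is exactly the double application to the pairs $(f,g)$ and $(g,f)$ that you spell out. Your bookkeeping of which part of hypothesis (ii) feeds each invocation is accurate and complete.
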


\subsection{Classification of descent moduli on $\mathbb{R}^{\mathcal{V}}$}

\label{cm}

Denote $\mathcal{P(\cV)}^{\ast }$ the family of nonempty subsets of $\mathcal{V}$. Given a descent modulus $T$ on $\mathcal{V}$ we recall from \eqref{eq:ZT} the critical map 
\begin{equation*}
\mathcal{Z}_{T}\,:\,\mathbb{R}^{\mathcal{V}}\rightarrow \mathcal{P(V)}^{\ast}.
\end{equation*}

\begin{definition}[equivalence of descent moduli]
Let $T$, $S$ be two descent moduli on $\mathcal{V}.$ We say that the moduli $T$ and $S$
are \textit{equivalent} (and denote $T\sim S$) if $\mathcal{Z}_{T}=\mathcal{Z}_{S}$.
\end{definition}

Notice that if $T\sim S$, then  $T$ and $S$ determine the same functions via Theorem \ref{sec03:thm:Determination}.\medskip 

A family $\mathcal{D}:=\{\mathcal{D}_{x}\}_{x\in \mathcal{V}}$ is called an
\textit{active neighborhood system}, provided $x\in \mathcal{D}_{x}\subset \mathcal{V}$ for every $x\in \mathcal{V}$. An example of such system has been defined in~\eqref{eq:act} in the particular case where the set $\mathcal{V}$ is equipped with a generator $L$.\smallskip\newline
We henceforth denote by $\mathcal{E}(\mathcal{V})$ the set of active neihborhood systems on $\mathcal{V}$. Then for any such system $\mathcal{D}\in \mathcal{E}(\mathcal{V})$ we associate a descent modulus $T_{\mathcal{D}}$ defined for $f\in \mathbb{R}^{\mathcal{V}}\ $and $x\in 
\mathcal{V}$ as follows (compare with~\eqref{eq:D-global} in Proposition~\ref{prop-m-slope}):
\begin{equation}
T_{\mathcal{D}}[f](x):=\max_{y\in \mathcal{D}_{x}}\, (f(x)-f(y))_{+}
\label{eq:21}
\end{equation}

Conversely, given any (abstract) descent modulus $T$ we set:
\begin{equation}
\left\{ 
\begin{array}{rcl}
\mathcal{K}_{x}(T) & \df & \left\{ K\subset \mathcal{V}\,:\,x\in K\cap 
\mathcal{Z}_{T}(\mathds{1}_{K})\right\} 
% \quad \text{and}
 \\[2mm]
D_{x}(T) & \df & \bigcap_{K\in \mathcal{K}_{x}(T)}K
\end{array}
\right.   \label{eq:22}
\end{equation}
where $\mathds{1}_{K}$ denotes the characteristic function of the set $K,$
that is:
\begin{equation*}
\mathds{1}_{K}(x)=\left\{ 
\begin{tabular}{ll}
$1,$ & if  $x\in K$ \\ 
$0,$ & if $x\notin K.$
\end{tabular}
\right. 
\end{equation*}
The interest of these notions is illustrated by the following result:

\begin{theorem}[classification of moduli]
\label{caract} If a homogeneous descent modulus $T$ satisfies 
\begin{equation}
\forall \ x\in \mathcal{V}:\quad D_{x}(T) \in \mathcal{K}_{x}(T) 
\tag{$\mathcal{H}$}
\end{equation}

then there exists a family $\mathcal{D}\in \mathcal{E}(\mathcal{V})$ such
that $T$ is equivalent to $T_{\mathcal{D}}$ given in (\ref{eq:21}).
\end{theorem}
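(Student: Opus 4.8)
The plan is to exhibit the active neighborhood system explicitly and then reduce the conclusion to an identity of critical maps. Since the equivalence $T\sim T_{\mathcal{D}}$ means precisely $\mathcal{Z}_{T}=\mathcal{Z}_{T_{\mathcal{D}}}$, and since $\mathcal{D}$ is at our disposal, I would take the canonical candidate $\mathcal{D}_{x}:=D_{x}(T)$ for every $x\in\mathcal{V}$. First I would check that this is a legitimate active neighborhood system: every $K\in\mathcal{K}_{x}(T)$ contains $x$ by definition, and $\mathcal{K}_{x}(T)$ is nonempty because the constant function $\mathds{1}_{\mathcal{V}}\equiv 1$ has $x$ as a global minimizer, so $\mathcal{V}\in\mathcal{K}_{x}(T)$ by (D1); hence $x\in D_{x}(T)\subset\mathcal{V}$ and $\mathcal{D}=\{D_{x}(T)\}_{x}\in\mathcal{E}(\mathcal{V})$, with $T_{\mathcal{D}}$ being a descent modulus as already recorded for operators of the form (\ref{eq:21}). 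It then remains to prove $\mathcal{Z}_{T}(f)=\mathcal{Z}_{T_{\mathcal{D}}}(f)$ for every $f\in\mathbb{R}^{\mathcal{V}}$.

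The heart of the argument is a reduction of the critical condition for an arbitrary $f$ to a critical condition for an indicator, namely
$$x\in\mathcal{Z}_{T}(f)\quad\Longleftrightarrow\quad [f\geq f(x)]\in\mathcal{K}_{x}(T).$$
To prove it I would fix $x$ and set $K:=[f\geq f(x)]$, so that the descent profile $z\mapsto(f(x)-f(z))_{+}$ of $f$ at $x$ is supported exactly on $\mathcal{V}\setminus K=[f<f(x)]$. If this support is empty then $x\in\argmin f$, $K=\mathcal{V}$, and both sides hold by (D1). Otherwise, finiteness of $\mathcal{V}$ supplies $0<\delta:=\min_{f(z)<f(x)}(f(x)-f(z))$ and $M:=\max_{f(z)<f(x)}(f(x)-f(z))<+\infty$, giving the pointwise sandwich $\delta\,\mathds{1}_{\mathcal{V}\setminus K}\leq(f(x)-f(\cdot))_{+}\leq M\,\mathds{1}_{\mathcal{V}\setminus K}$. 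Recognizing the scaled $\mathds{1}_{\mathcal{V}\setminus K}$ as the descent profiles of $\delta\mathds{1}_{K}$ and $M\mathds{1}_{K}$ at $x$, monotonicity (D2) yields $T[\delta\mathds{1}_{K}](x)\leq T[f](x)\leq T[M\mathds{1}_{K}](x)$, and $1$-homogeneity turns this into $\delta\,T[\mathds{1}_{K}](x)\leq T[f](x)\leq M\,T[\mathds{1}_{K}](x)$. As $\delta,M\in(0,+\infty)$, this forces $T[f](x)=0\iff T[\mathds{1}_{K}](x)=0$, which is the claim. I expect this squeeze to be the main (though short) obstacle, as it is the one place where homogeneity and the finiteness of $\mathcal{V}$ are used together in an essential way.

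With the reduction in hand, I would compute the two critical maps. On the one hand $x\in\mathcal{Z}_{T_{\mathcal{D}}}(f)$ means $\max_{y\in D_{x}(T)}(f(x)-f(y))_{+}=0$, i.e.\ $D_{x}(T)\subset[f\geq f(x)]$. On the other hand, by the reduction, $x\in\mathcal{Z}_{T}(f)$ means $[f\geq f(x)]\in\mathcal{K}_{x}(T)$. So the proof closes by establishing, for $K:=[f\geq f(x)]$, the equivalence $K\in\mathcal{K}_{x}(T)\iff D_{x}(T)\subset K$. The forward direction is immediate from $D_{x}(T)=\bigcap_{K'\in\mathcal{K}_{x}(T)}K'$. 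The converse is exactly where hypothesis $(\mathcal{H})$ is indispensable: it guarantees $D_{x}(T)\in\mathcal{K}_{x}(T)$, that is $T[\mathds{1}_{D_{x}(T)}](x)=0$; then, since $x\in D_{x}(T)\subset K$ gives $\mathds{1}_{\mathcal{V}\setminus K}\leq\mathds{1}_{\mathcal{V}\setminus D_{x}(T)}$ (smaller neighborhoods carry more descent for indicators), monotonicity (D2) yields $0\leq T[\mathds{1}_{K}](x)\leq T[\mathds{1}_{D_{x}(T)}](x)=0$, whence $K\in\mathcal{K}_{x}(T)$. Combining the three equivalences gives $\mathcal{Z}_{T}=\mathcal{Z}_{T_{\mathcal{D}}}$, i.e.\ $T\sim T_{\mathcal{D}}$, which completes the proof. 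Note that without $(\mathcal{H})$ the set $D_{x}(T)$ need not be critical and this last step collapses, consistent with the hypothesis being both necessary and sufficient.
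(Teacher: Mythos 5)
Your proof is correct, and it takes a genuinely more direct route than the paper. The paper derives Theorem~\ref{caract} from the abstract characterization of critical maps (Theorem~\ref{critmap}): it verifies that $\mathcal{Z}_T$ satisfies axioms (Z1)--(Z5) under $(\mathcal{H})$ (Proposition~\ref{verif}), checks that $\mathcal{Z}_T$ and $\mathcal{Z}_{\mathcal{D}}$ agree on indicator functions, and then propagates this agreement to all of $\mathbb{R}^{\mathcal{V}}$ by induction on the number of values of $f$, using the min/max truncations $\phi_r$, $\varphi_r$ of axiom (Z3). Your squeeze lemma $x\in\mathcal{Z}_T(f)\iff[f\geq f(x)]\in\mathcal{K}_x(T)$ --- obtained by sandwiching the descent profile of $f$ at $x$ between $\delta\,\mathds{1}_{\mathcal{V}\setminus K}$ and $M\,\mathds{1}_{\mathcal{V}\setminus K}$ and invoking (D2) plus $1$-homogeneity --- collapses that entire induction into a one-shot reduction to the single indicator $\mathds{1}_{[f\geq f(x)]}$; your final step (the equivalence $K\in\mathcal{K}_x(T)\iff D_x(T)\subset K$ via $(\mathcal{H})$ and monotonicity) coincides with the paper's verification of (Z5). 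What your argument does not recover is Theorem~\ref{critmap} itself, i.e.\ the intrinsic characterization of which abstract maps $\mathcal{Z}:\mathbb{R}^{\mathcal{V}}\to\mathcal{P(V)}^{\ast}$ arise as critical maps of homogeneous descent moduli --- the paper's longer route is designed to establish that result of independent interest, with Theorem~\ref{caract} falling out as a corollary. For the classification statement alone, your proof is shorter, complete, and correctly isolates the two places where homogeneity and $(\mathcal{H})$ are indispensable.
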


\medskip 
Before we proceed, let us introduce the following definition.
	\begin{definition} For  a critical map $\mathcal{Z}:\,\mathbb{R}^{\mathcal{V}}\rightarrow \mathcal{P(V)}^{\ast
		}$ and for each $x\in \mathcal{V}$, we define
		\begin{equation}
			\left\{ 
			\begin{array}{rcl}
				\mathcal{K}_{x}(\mathcal{Z}) & \df & \{K\subset \mathcal{V}\,:\,x\in K\cap \mathcal{Z}(\mathds{1}_{K})\} \\[2mm]
				\mathcal{D}_{x}(\mathcal{Z}) & \df & \bigcap_{K\in \mathcal{K}_{x}}K
			\end{array}
			\right.   \label{CD}
		\end{equation}
		If there is no confusion, we might simply write $\mathcal{K}_{x}$ and $\mathcal{D}_{x}$, respectively.
	\end{definition}

The proof of Theorem~\ref{caract} is based on a characterization of those
maps $\mathcal{Z}:\,\mathbb{R}^{\mathcal{V}}\rightarrow \mathcal{P(V)}^{\ast}$ for which there exists a descent modulus $T$ such that $\mathcal{Z=Z}_{T}.$
More precisely, let $\mathcal{Z}\,:\,\mathbb{R}^{\mathcal{V}}\rightarrow \mathcal{P(\cV)}^{\ast }$ be an \textit{abstract critical map} satisfying
the following conditions :\medskip \newline
(Z1) for every $f\in \mathbb{R}^{\mathcal{V}}$\ and $r\in \mathbb{R}$: $\mathcal{Z}[f+r]=\mathcal{Z}[f]$\medskip \newline
(Z2) for every $f\in \mathbb{R}^{\mathcal{V}}$ and $r>0$:\  $\mathcal{Z[}rf]=\mathcal{Z}[f]$\medskip \newline
(Z3) for every $f\in \mathbb{R}^{\mathcal{V}}$\ and $r\in \mathbb{R}$: \ \  $\mathcal{Z}[f]=\left( \mathcal{Z}[\phi _{r}(f)]\cap \lbrack f\leq
r]\right) \sqcup \left( \mathcal{Z}[\varphi _{r}(f)]\cap \lbrack f>r]\right) 
$\smallskip \newline
where 
\begin{equation*}
\forall \ r\in \mathbb{R},\,\forall \ s\in \mathbb{R},\qquad \left\{ 
\begin{array}{rcl}
\phi _{r}(s) & \df & r\wedge s \\ 
\varphi _{r}(s) & \df & r\vee s.
\end{array}
\right. 
\end{equation*}
We also assume\medskip \newline
(Z4) for every$\ K\subset \mathcal{V}$ we have: $K^{\mathrm{c}}\subset \mathcal{Z}(\mathds{1}_{K});$ and\medskip \newline
(Z5) for every $x\in \mathcal{V}:$ 
\begin{equation*}
\mathcal{K}_{x}=\{K\subset \mathcal{V}\,:\,\mathcal{D}_{x}\subset K\}.
\end{equation*}

The announced characterization of critical maps is the following:

\begin{theorem}[characterization of critical maps]
\label{critmap} An abstract critical map $\mathcal{Z}\,:\,\mathbb{R}^{\mathcal{V}}\rightarrow \mathcal{P(\cV)}^{\ast }$ is associated to some homogeneous descent modulus $T$ (that is, $\mathcal{Z=Z}_{T}$)  if and only
if conditions $(Z1)$--$(Z5)$ hold. \newline 
In this case $\mathcal{Z}=\mathcal{Z}_{T_{\mathcal{D}}}$, where $T_{\mathcal{D}}$ is 
defined by~\eqref{eq:21} for $\mathcal{D}:=\{\mathcal{D}_{x}\}_{x\in \mathcal{V}}$ constructed in~\eqref{CD}.
\end{theorem}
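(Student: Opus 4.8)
The plan is to prove the two implications separately, treating the sufficiency direction (conditions $(Z1)$–$(Z5)$ force $\mathcal{Z}=\mathcal{Z}_{T_{\mathcal{D}}}$) as the substantive part, since this is what feeds into Theorem~\ref{caract}. I would first dispatch the easy necessity facts: if $\mathcal{Z}=\mathcal{Z}_T$ for a homogeneous descent modulus $T$, then $(Z1)$ is translation-invariance (Proposition~\ref{sec03:prop:EquivalentProperties}(b)), $(Z2)$ is immediate from $T[rf]=rT[f]$, and $(Z4)$ holds because every $x\in K^{\mathrm c}$ is a global minimizer of $\mathds{1}_K$, hence $T$-critical by $(D1)$.

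The technical core is a single reduction lemma: any map $\mathcal{Z}$ obeying $(Z1)$–$(Z3)$ satisfies, for every $f$ and every $x$ with $a:=f(x)$ and $K:=[f\geq a]$,
\[
x\in\mathcal{Z}(f)\iff x\in\mathcal{Z}(\mathds{1}_K).
\]
I would prove this by two applications of $(Z3)$. Applying it with threshold $r=a$ and using $f(x)=a\leq a$ replaces $f$ by $\phi_a(f)=f\wedge a$, which caps every value above $a$ down to $a$; this shows criticality at $x$ ignores the values of $f$ above $f(x)$. Then, in the nondegenerate case $K\neq\mathcal{V}$, choosing $r'$ strictly between $\max\{f(z):f(z)<a\}$ and $a$ and applying $(Z3)$ to $\phi_a(f)$ (now $\phi_a(f)(x)=a>r'$) replaces it by $\varphi_{r'}(\phi_a(f))$, collapsing the lower level set $K^{\mathrm c}$ to the single value $r'$. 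The resulting function equals $r'+(a-r')\mathds{1}_K$, so $(Z1)$ together with $(Z2)$ (factor $a-r'>0$) identify its critical set with that of $\mathds{1}_K$; the case $K=\mathcal{V}$ is handled by constancy and $(Z1)$. This is precisely the ``only descent matters'' principle, phrased through the level-set axioms alone.

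With the lemma in hand the sufficiency direction closes quickly. The family $\mathcal{D}=\{\mathcal{D}_x\}$ from~\eqref{CD} is an active neighborhood system ($x\in\mathcal{D}_x$ because every $K\in\mathcal{K}_x$ contains $x$), and $T_{\mathcal{D}}$ from~\eqref{eq:21} is exactly the semiglobal slope $\mathscr{G}_{\mathcal{D}}$ of Proposition~\ref{prop-m-slope} for $m\equiv1$, hence a homogeneous descent modulus; in particular $\mathcal{Z}_{T_{\mathcal{D}}}$ also obeys $(Z1)$–$(Z3)$ and the reduction lemma. A direct computation gives, for $x\in K$, $T_{\mathcal{D}}[\mathds{1}_K](x)=0\iff\mathcal{D}_x\subset K$, while $(Z5)$ gives $x\in\mathcal{Z}(\mathds{1}_K)\iff\mathcal{D}_x\subset K$; for $x\notin K$ both critical sets contain $x$ by $(Z4)$ and by global-minimality. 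Thus $\mathcal{Z}$ and $\mathcal{Z}_{T_{\mathcal{D}}}$ agree on all indicators, and the reduction lemma propagates this agreement to every $f$, yielding $\mathcal{Z}=\mathcal{Z}_{T_{\mathcal{D}}}$.

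Two steps carry the real difficulty. First, the converse inclusion in $(Z3)$ over the region $[f>r]$: monotonicity $(D2)$ only yields $\varphi_r(f)\preceq_x f$, hence $T[\varphi_r f](x)\leq T[f](x)$, so to recover equality of critical sets I would exploit homogeneity, choosing (on the finite space) a finite $\mu\geq1$ with $(f(x)-f(z))_+\leq\mu\,(\varphi_r f(x)-\varphi_r f(z))_+$ for all $z$, so that $f\preceq_x\mu\,\varphi_r f$ and therefore $T[f](x)\leq\mu\,T[\varphi_r f](x)$. This is exactly where $1$-homogeneity is indispensable. Second, and this is the genuine obstacle, the necessity of $(Z5)$: since monotonicity makes $\mathcal{K}_x$ upward closed, $(Z5)$ is equivalent to the absorption property $\mathcal{D}_x\in\mathcal{K}_x$, i.e.\ to stability of $\mathcal{K}_x$ under finite intersections. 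This is precisely hypothesis $(\mathcal{H})$ of Theorem~\ref{caract}, and it is the delicate point, since the descent at $x$ toward an intersection $K_1\cap K_2$ is governed by the \emph{union} $K_1^{\mathrm c}\cup K_2^{\mathrm c}$ of the individual descent supports rather than by either one alone; I would isolate this intersection-stability as the crux, keeping in mind that it is exactly the feature separating the moduli representable as $T_{\mathcal{D}}$ from arbitrary homogeneous ones.
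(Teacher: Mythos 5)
Your sufficiency argument is correct but routes the reduction to indicator functions differently from the paper. The paper (Proposition~\ref{prop:ReductionToSets}) proves $\mathcal{Z}=\mathcal{Z}_{\mathcal{D}}$ by induction on the number $n$ of values of $f$, splitting at the median value $r=f_{\lfloor (n+1)/2\rfloor}$ via $(Z3)$ so that both $\phi_r\circ f$ and $\varphi_r\circ f$ take at most $n-1$ values. Your two-step pointwise lemma, namely $x\in\mathcal{Z}[f]\iff x\in\mathcal{Z}[\mathds{1}_{[f\geq f(x)]}]$, obtained by first capping at $r=f(x)$ and then lifting the strict sublevel set to a value $r'$ chosen strictly between $\max\{f(z):f(z)<f(x)\}$ and $f(x)$, reaches the same conclusion without induction; both applications of $(Z3)$ are used in the legitimate pointwise form since $x$ always lies in the relevant half of the disjoint union, and the degenerate case $[f\geq f(x)]=\mathcal{V}$ reduces to a constant function as you say. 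The identification on indicators via $(Z4)$--$(Z5)$ and the computation $T_{\mathcal{D}}[\mathds{1}_K](x)=0\iff\mathcal{D}_x\subset K$ for $x\in K$ match the paper's final proposition and Lemma~\ref{cKD}. Your scaling argument for the necessity of $(Z3)$ on $[f>r]$ (a finite $\mu\geq1$ with $f\preceq_x\mu\,\varphi_r f$) is the reciprocal of the paper's choice $a=\frac{f(x)-(r\vee\min f)}{\max f-\min f}$ and is equally valid; this is indeed the one place where $1$-homogeneity is used beyond scalar-monotonicity. The only imprecision is calling $T_{\mathcal{D}}$ the semiglobal slope with $m\equiv 1$ (that $m$ violates the separation axiom at $x=y$; take $m(x,y)=1$ for $x\neq y$, which changes nothing since $\Delta_f^+(x,x)=0$).

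On $(Z5)$: your hesitation is justified and your diagnosis coincides with the paper's own treatment. Given that monotonicity makes $\mathcal{K}_x$ upward closed, $(Z5)$ is indeed equivalent to $\mathcal{D}_x\in\mathcal{K}_x$, i.e.\ to hypothesis $(\mathcal{H})$ of Theorem~\ref{caract}, and this genuinely cannot be derived from homogeneity alone: Example~\ref{exafin} exhibits a homogeneous descent modulus for which $\{\bar x,y\},\{\bar x,z\}\in\mathcal{K}_{\bar x}$ but $\{\bar x\}\notin\mathcal{K}_{\bar x}$, so $\mathcal{K}_{\bar x}$ is not intersection-stable and $(Z5)$ fails. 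Accordingly, the paper's proof of the ``only if'' direction is Proposition~\ref{verif}, which assumes $(\mathcal{H})$; read literally, the theorem's necessity claim should carry that hypothesis (or be restricted to moduli of the form $T_{\mathcal{D}}$, for which Lemma~\ref{vD} verifies $(\mathcal{H})$ automatically). So do not attempt to prove $(Z5)$ for an arbitrary homogeneous modulus --- state $(\mathcal{H})$ as an additional hypothesis for that direction, exactly as the paper does.
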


The last assertion of Theorem~\ref{critmap} is implicitly assuming that $\mathcal{D}\in \mathcal{E}(\mathcal{V})$. The following lemma confirms that this is indeed the case:

\begin{lemma}\label{fac} Let $\mathcal{Z}\,:\,\mathbb{R}^{\mathcal{V}}\rightarrow 
\mathcal{P(\cV)}^{\ast }$ be an abstract critical mapping that satisfies conditions $(Z1)$--$(Z4)$. Let further $\mathcal{D}:=\{\mathcal{D}_{x}\}_{x\in \mathcal{V}}$
be constructed as in~\eqref{CD}. Then $\mathcal{Z}[\mathds{1}_{\mathcal{V}}]=\mathcal{V}$ and $\mathcal{D\in E}(\mathcal{V})$.
\end{lemma}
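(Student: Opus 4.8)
The plan is to establish the two assertions separately, using only axioms (Z1) and (Z4); the remaining axioms (Z2) and (Z3) play no role in this particular lemma.

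First, to prove $\mathcal{Z}[\mathds{1}_{\mathcal{V}}]=\mathcal{V}$, I would apply (Z4) to the extreme choice $K=\emptyset$. Since $\comp{\emptyset}=\mathcal{V}$ and $\mathds{1}_{\emptyset}$ is nothing but the constant function $0$, axiom (Z4) gives $\mathcal{V}\subset \mathcal{Z}(\mathds{1}_{\emptyset})$. Because $\mathcal{Z}$ always takes values in subsets of $\mathcal{V}$, this inclusion forces $\mathcal{Z}(\mathds{1}_{\emptyset})=\mathcal{V}$. Then, observing that $\mathds{1}_{\mathcal{V}}=\mathds{1}_{\emptyset}+1$, translation invariance (Z1) (applied with $f=\mathds{1}_{\emptyset}$ and $r=1$) yields $\mathcal{Z}[\mathds{1}_{\mathcal{V}}]=\mathcal{Z}[\mathds{1}_{\emptyset}]=\mathcal{V}$, which is the first claim.

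Second, to prove $\mathcal{D}\in\mathcal{E}(\mathcal{V})$, I must check that $x\in\mathcal{D}_x\subset\mathcal{V}$ for every $x\in\mathcal{V}$. The inclusion $\mathcal{D}_x\subset\mathcal{V}$ is immediate, since $\mathcal{D}_x=\bigcap_{K\in\mathcal{K}_x}K$ is an intersection of subsets of $\mathcal{V}$. For the membership $x\in\mathcal{D}_x$, the decisive observation is that every $K\in\mathcal{K}_x$ contains $x$ by the very definition of $\mathcal{K}_x$ (which requires $x\in K\cap\mathcal{Z}(\mathds{1}_K)$); hence $x$ lies in every member of the intersection and therefore in $\mathcal{D}_x$. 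To confirm that this intersection is over a nonempty family (and so avoid any degenerate empty-intersection issue), I would invoke the first part: since $x\in\mathcal{V}=\mathcal{Z}(\mathds{1}_{\mathcal{V}})$ and trivially $x\in\mathcal{V}$, the whole space satisfies $\mathcal{V}\in\mathcal{K}_x$, so $\mathcal{K}_x\neq\emptyset$. Combining these facts gives $x\in\mathcal{D}_x\subset\mathcal{V}$, i.e.\ $\mathcal{D}\in\mathcal{E}(\mathcal{V})$.

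I do not expect any genuine obstacle here; the argument is essentially a bookkeeping exercise in the axioms. The only point requiring a moment of recognition is that the ``base case'' $K=\emptyset$ of (Z4) already pins down the critical set of the constant function, and that the built-in constraint $x\in K$ in the definition of $\mathcal{K}_x$ automatically propagates to the intersection $\mathcal{D}_x$. The nonemptiness of $\mathcal{K}_x$, guaranteed by $\mathcal{V}\in\mathcal{K}_x$, is precisely what links the two parts and closes the proof.
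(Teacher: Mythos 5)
Your argument is correct and is essentially identical to the paper's own proof: apply (Z4) with $K=\emptyset$ to get $\mathcal{Z}[\mathds{1}_{\emptyset}]=\mathcal{V}$, transfer this to $\mathds{1}_{\mathcal{V}}$ via (Z1), then note that $\mathcal{V}\in\mathcal{K}_x$ guarantees nonemptiness of the family and that every $K\in\mathcal{K}_x$ contains $x$ by definition, so $x\in\mathcal{D}_x$. No gaps; nothing to add.
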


\begin{proof}
Applying (Z4) with $K=\emptyset$, we get $\mathcal{V}\subset \mathcal{Z}(\mathds{1}_{\emptyset })=\mathcal{Z}[\boldsymbol{0}]$, where $\boldsymbol{0}$ denotes the null function on $\mathcal{V}$. We deduce from (Z1) (for $r=1$)
that $\mathcal{Z}[\mathds{1}_{\mathcal{V}}]=\mathcal{Z}[\boldsymbol{0}]=\mathcal{V}$. Recall that the family $\mathcal{D}:=(\mathcal{D}_{x})_{x\in \mathcal{V}}$
belongs to $\mathcal{E}(\mathcal{V})$ if and only if it satisfies 
\begin{equation*}
%x\in \mathcal{V}\Longrightarrow x\in \mathcal{D}_{x}.
x\in \mathcal{D}_{x},\,\, \forall x\in \mathcal{V}.
\end{equation*}
Fix $x\in \mathcal{V}$. Since $\mathcal{Z}[\mathds{1}_{\mathcal{V}}]=\mathcal{V}$, we have 
$x\in \mathcal{Z}[\mathds{1}_{\mathcal{V}}]$, which in conjunction with $x\in \mathcal{V}$ yields $\mathcal{V}\in \mathcal{K}_{x}$. It follows that $\mathcal{K}_{x}\neq \emptyset $. By definition of $\mathcal{K}_{x}$, for any $K\in 
\mathcal{K}_{x}$, we have $x\in K$, so that $x\in \bigcap_{K\in \mathcal{K}_{x}}K=D_{x}$. Therefore $\mathcal{D\in E}(\mathcal{V})$.
\end{proof}

Let us postpone for a while the proof of Theorem~\ref{critmap} and show
instead that Theorem~\ref{critmap} implies Theorem~\ref{caract}. To this end, let $T$ be a
homogeneous descent modulus on $\mathcal{V}$. Then using Theorem~\ref{critmap}, it is easy to see that $T$ is equivalent to $T_{\mathcal{D}}$ for some active
neighborhood system $\mathcal{D}\in \mathcal{E}(\mathcal{V})$  provided the following proposition is proven:
\begin{proposition}
\label{verif} Under Assumption ($\mathcal{H}$), the critical map $\mathcal{Z}_{T}$ satisfies $(Z1)$--$(Z5)$.
\end{proposition}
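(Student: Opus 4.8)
The plan is to verify the five conditions $(Z1)$--$(Z5)$ for $\mathcal{Z}=\mathcal{Z}_T$ one at a time, isolating the truncation identity $(Z3)$ as the only genuinely delicate point. Three of them are essentially free. Axiom $(Z1)$ follows from the translation invariance of descent moduli, Proposition~\ref{sec03:prop:EquivalentProperties}(b): since $T[f+r]=T[f]$, we get $\mathcal{Z}_T(f+r)=\mathcal{Z}_T(f)$. Axiom $(Z2)$ is immediate from $1$-homogeneity, as $T[rf]=rT[f]$ for $r>0$ forces $T[rf](x)=0\iff T[f](x)=0$. Axiom $(Z4)$ is a direct consequence of $(D1)$: for $x\in K^{\mathrm{c}}$ one has $\mathds{1}_K(x)=0=\min\mathds{1}_K$, so $x\in\argmin\mathds{1}_K\subset\mathcal{Z}_T(\mathds{1}_K)$.

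For $(Z3)$ I would first record the basic consequence of monotonicity $(D2)$ that $T[f](x)$ depends on $f$ only through the descent profile $z\mapsto(f(x)-f(z))_+$. The identity then decouples along the partition $\mathcal{V}=[f\le r]\sqcup[f>r]$, and it suffices to prove $\mathcal{Z}_T(f)\cap[f\le r]=\mathcal{Z}_T(\phi_r(f))\cap[f\le r]$ and $\mathcal{Z}_T(f)\cap[f>r]=\mathcal{Z}_T(\varphi_r(f))\cap[f>r]$, after which taking the disjoint union yields $(Z3)$. On $[f\le r]$ a short case check shows that $\phi_r(f)=f\wedge r$ has \emph{exactly the same} descent profile at $x$ as $f$, so $T[f](x)=T[\phi_r(f)](x)$ and this half costs nothing. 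On $[f>r]$, writing $g=\varphi_r(f)=f\vee r$ (so $g(x)=f(x)$), one checks by cases that $(g(x)-g(z))_+\le(f(x)-f(z))_+$ for all $z$, whence $(D2)$ gives $T[g](x)\le T[f](x)$ and in particular $T[f](x)=0\Rightarrow T[g](x)=0$.

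I expect the main obstacle to be the reverse implication on $[f>r]$, that is $T[\varphi_r(f)](x)=0\Rightarrow T[f](x)=0$: here $f$ carries \emph{strictly more} descent than $g$ at every point below level $r$, so monotonicity alone cannot recover it (and, indeed, the implication fails for non-homogeneous moduli). The idea I would use is a scaling–domination argument exploiting both homogeneity and the finiteness of $\mathcal{V}$. Set $c=f(x)-r>0$; the profile of $g$ is the profile of $f$ capped at height $c$, and since $\mathcal{V}$ is finite the profile of $f$ is bounded, so with $\lambda=\max\{1,\max_{z}(f(x)-f(z))_+/c\}<+\infty$ a case analysis gives $(f(x)-f(z))_+\le\lambda\,(g(x)-g(z))_+$ for all $z$. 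Reading the right-hand side as the descent profile of $\lambda g=\varphi_{\lambda r}(\lambda f)$ and applying $(D2)$ together with homogeneity yields $T[f](x)\le T[\lambda g](x)=\lambda\,T[g](x)=0$, which closes this half.

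It remains to establish $(Z5)$, where Assumption $(\mathcal{H})$ enters decisively. The inclusion $\mathcal{K}_x\subset\{K:\mathcal{D}_x\subset K\}$ is automatic since $\mathcal{D}_x=\bigcap_{K\in\mathcal{K}_x}K$. For the converse I would assume $\mathcal{D}_x\subset K$ and invoke $(\mathcal{H})$, i.e. $\mathcal{D}_x\in\mathcal{K}_x$, which provides both $x\in\mathcal{D}_x\subset K$ and $T[\mathds{1}_{\mathcal{D}_x}](x)=0$. Since $x\in\mathcal{D}_x\subset K$, the descent profiles at $x$ of $\mathds{1}_K$ and $\mathds{1}_{\mathcal{D}_x}$ are $\mathds{1}_{K^{\mathrm{c}}}$ and $\mathds{1}_{\mathcal{D}_x^{\mathrm{c}}}$, and $K^{\mathrm{c}}\subset\mathcal{D}_x^{\mathrm{c}}$; hence $(D2)$ gives $T[\mathds{1}_K](x)\le T[\mathds{1}_{\mathcal{D}_x}](x)=0$, so $x\in K\cap\mathcal{Z}_T(\mathds{1}_K)$, i.e. $K\in\mathcal{K}_x$. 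This yields $(Z5)$ and completes the verification.
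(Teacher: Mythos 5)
Your proposal is correct and follows essentially the same route as the paper: (Z1), (Z2), (Z4) from translation invariance, homogeneity and (D1) respectively; (Z3) by splitting along $[f\le r]\sqcup[f>r]$, exact equality of descent profiles for $\phi_r(f)$ on $[f\le r]$, and a homogeneity--monotonicity scaling comparison on $[f>r]$; and (Z5) exactly as in the paper via $(\mathcal{H})$ and the profile comparison $1-\mathds{1}_K(z)\le 1-\mathds{1}_{\mathcal{D}_x}(z)$. The only cosmetic difference is that in the hard half of (Z3) you scale $\varphi_r(f)$ up by $\lambda$ where the paper scales $f$ down by $a$ (essentially $\lambda=1/a$), which has the minor advantage of not requiring the constant-function case to be set aside.
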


\begin{proof}
We verify successively that conditions (Z1)--(Z5) hold. Indeed, condition (Z1) comes from the translation invariance property of $T$, see Proposition~\ref{sec03:prop:EquivalentProperties}, while (Z2)  is consequence of the homogeneity assumption for $T$. \smallskip\newline 
Verifying (Z3) requires some extra work: fix $f\in \mathbb{R}^{\mathcal{V}}$ and $r\in \mathbb{R}$. Since $(Z3)$ holds
trivially for constant functions, we may assume that $f$ takes at least two
different values. Then for any $s,s^{\prime }\in \mathbb{R}$   we have 
\begin{eqnarray*}
(\phi _{r}(s)-\phi_r (s^{\prime }))_+ &\leq &(s-s^{\prime })_+ \\
(\varphi _{r}(s)-\varphi_r (s^{\prime }))_+ &\leq &(s-s^{\prime })_+
\end{eqnarray*}
and monotonicity yields
\begin{eqnarray*}
T[\phi _{r}(f)] &\leq &T[f] \\
T[\varphi _{r}(f)] &\leq &T[f]
\end{eqnarray*}
Consequently:
\begin{equation*}
\mathcal{Z}_{T}[f]\subset \mathcal{Z}_{T}[\phi _{r}(f)]\cap \mathcal{Z}_{T}[\varphi _{r}(f)]
\end{equation*}
so that 
\begin{align*}
\mathcal{Z}_{T}[f] &=\left(\mathcal{Z}_{T}[f]\cap [f\leq r]\right)\sqcup \left(\mathcal{Z}_{T}[f]\cap [f> r]\right)\\
& \subset \left( \mathcal{Z}_{T}[\phi _{r}(f)]\cap \lbrack
f\leq r]\right) \sqcup \left( \mathcal{Z}_{T}[\varphi _{r}(f)]\cap \lbrack
f>r]\right) .
\end{align*}
To get the reserved inclusion, consider $x\in \mathcal{V}$ with $f(x)\leq r$, in particular $\phi _{r}(f(x))=f(x)$. For any $z\in \mathcal{V}$ with $\phi _{r}(f(z))<\phi _{r}(f(x))$, we have $\phi _{r}(f(z))=f(z)$, so that 
\begin{equation*}
(\phi _{r}(f(x))-\phi _{r}(f(z)))_{+}=(f(x)-f(z))_{+}
\end{equation*}
For any $z\in \mathcal{V}$ with $\phi _{r}(f(z))\geq \phi _{r}(f(x))$, we
must have $f(z)\geq f(x)$, thus 
\begin{equation*}
(\phi _{r}(f(x))-\phi _{r}(f(z)))_{+}=0\ =\ (f(x)-f(z))_{+}
\end{equation*}

From the monotonicity property, we deduce $T[\phi _{r}(f)](x)=T[f](x)$.
These considerations show that 
\begin{equation}
\mathcal{Z}_{T}[\phi _{r}(f)]\cap \lbrack f\leq r]\subset \mathcal{Z}_{T}[f]
\label{ZTsub}
\end{equation}

Finally, consider $x\in \mathcal{V}$ with $f(x)>r$, in particular $\varphi
_{r}(f(x))=f(x)$. Since $f$ is not constant, we can define
\begin{equation*}
	a:=\frac{f(x)-(r\vee \min f)}{\max f-\min f} > 0.
\end{equation*}
 
Now, on the one hand, for any $z\in \mathcal{V}$ with $\varphi _{r}(f(z))\geq
\varphi _{r}(f(x))$, we must have $f(z)\geq f(x)$, thus 
\begin{equation*}
(\varphi _{r}(f(x))-\varphi _{r}(f(z)))_{+}=0\ =\ (f(x)-f(z))_{+} =\ (af(x)-af(z))_{+}
\end{equation*}

On the other hand, for any $z\in \mathcal{V}$ with $\varphi _{r}(f(z))<\varphi _{r}(f(x))$, we
have 
\begin{align*}
\varphi _{r}(f(x))-\varphi _{r}(f(z)) \geq f(x) - (r\vee \min f) \geq a(f(x)-f(z))
\end{align*}
We deduce that 
\begin{equation*}
\forall \ z\in \mathcal{V},\qquad (\varphi _{r}(f(x))-\varphi _{r}(f(z))_{+}\geq
(af(x)-af(z))_{+}
\end{equation*}
and by monotonicity $T[\varphi _{r}(f)](x)\geq T[af](x)=aT[f](x)$, by
homogeneity. It follows that 
\begin{equation*}
\mathcal{Z}_{T}[\varphi _{r}(f)]\cap \lbrack f>r]\subset \mathcal{Z}_{T}[f].
\end{equation*}
Combining with \eqref{ZTsub}, we get the reverse inclusion 
\begin{equation*}
\mathcal{Z}_{T}[f]\supset \left( \mathcal{Z}_{T}[\phi _{r}(f)]\cap \lbrack
f\leq r]\right) \sqcup \left( \mathcal{Z}_{T}[\varphi _{r}(f)]\cap \lbrack
f>r]\right),
\end{equation*}
therefore (Z3) holds. \smallskip\newline
Condition (Z4) is a consequence of the preservation of global
minima, since the set of global minima of $\mathds{1}_{K}$ coincides with $K^{\mathrm{c}}$. \smallskip\newline
It remains to show (Z5). Set 
$$\widetilde{\mathcal{K}_x} =\{K\subset \mathcal{V}:\,\mathcal{D}_x\subset K\}.$$ 
Then for every $K\in\mathcal{K}_x$ we have $\mathcal{D}_x\subset K$, that is, $K\in\widetilde{\mathcal{K}_x}$ and $\mathcal{K}_x\subset\widetilde{\mathcal{K}_x}$.
To prove the reverse inclusion, consider $K\subset \mathcal{V}$ with $\mathcal{D}_{x}\subset K$. We need to
verify that $K\in \mathcal{K}_{x}$. Since $x\in \mathcal{D}_{x}$, we get $x\in K$. Furthermore, for every $z\in \mathcal{V}$ we have:
\begin{equation*}
\left (\mathds{1}_{\!K}(x)-\mathds{1}_{\!K}(z)\right )_{+}=1-\mathds{1}_{\!K}(z)\leq 1-\mathds{1}_{\!\mathcal{D}_{x}}(z)=(\mathds{1}_{\!\mathcal{D}_{x}}(x)-\mathds{1}_{\!\mathcal{D}_{x}}(z))_{+}
\end{equation*}
and thus by monotonicity, $T[\mathds{1}_{\!K}](x)\leq T[\mathds{1}_{\!\mathcal{D}_{x}}](x)=0$, where the last equality is obtained via~($\mathcal{H}$). It follows that $x\in \mathcal{Z}_{T}(K)$ whence $K\in \mathcal{K}_{x}$. 
\smallskip\newline
The proof is complete.
\end{proof}

For $\mathcal{D}\in \mathcal{E}(\mathcal{V})$, denote for simplicity by $\mathcal{Z}_{\mathcal{D}}$ the critical map 
$\mathcal{Z}_{T_{\mathcal{D}}}$ associated to the homogeneous descent modulus $T_{\mathcal{D}}$. The following lemma shows how to recover the active neighborhood system $\mathcal{D=}\{\mathcal{D}_{x}\}_{x}$ from 
$\mathcal{Z}_{\mathcal{D}}$:

\begin{lemma}
\label{cKD} For any $x\in \mathcal{V}$, we have 
\begin{equation*}
\mathcal{D}_{x}=\bigcap_{K\in \mathcal{K}_{x}}K
\end{equation*}
where 
\begin{equation*}
\mathcal{K}_{x}:=\{K\subset \mathcal{V}\,:\,x\in K\cap \mathcal{Z}_{\mathcal{%
D}}(\mathds{1}_{\!K})\}.
\end{equation*}
\end{lemma}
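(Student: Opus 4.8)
The plan is to compute the critical map $\mathcal{Z}_{\mathcal{D}}$ directly on indicator functions and then read off the family $\mathcal{K}_x$ explicitly. First I would evaluate $T_{\mathcal{D}}[\mathds{1}_{\!K}](x)$ from the defining formula~\eqref{eq:21}. For $x\notin K$ one has $\mathds{1}_{\!K}(x)=0\le \mathds{1}_{\!K}(y)$ for every $y\in\mathcal{V}$, so each term $(\mathds{1}_{\!K}(x)-\mathds{1}_{\!K}(y))_+$ vanishes and hence $T_{\mathcal{D}}[\mathds{1}_{\!K}](x)=0$; thus $x\in\mathcal{Z}_{\mathcal{D}}(\mathds{1}_{\!K})$ automatically. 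For $x\in K$ the term $(\mathds{1}_{\!K}(x)-\mathds{1}_{\!K}(y))_+$ equals $1$ exactly when $y\notin K$ and $0$ otherwise, so that, maximizing over $y\in\mathcal{D}_x$, we get $T_{\mathcal{D}}[\mathds{1}_{\!K}](x)=1$ if $\mathcal{D}_x\not\subset K$ and $T_{\mathcal{D}}[\mathds{1}_{\!K}](x)=0$ if $\mathcal{D}_x\subset K$.

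Combining these two computations, a set $K$ belongs to $\mathcal{K}_x$ --- that is, it satisfies $x\in K$ and $x\in\mathcal{Z}_{\mathcal{D}}(\mathds{1}_{\!K})$ --- precisely when $x\in K$ and $\mathcal{D}_x\subset K$. Since $\mathcal{D}\in\mathcal{E}(\mathcal{V})$ guarantees $x\in\mathcal{D}_x$, the inclusion $\mathcal{D}_x\subset K$ already forces $x\in K$, so the two requirements collapse to the single condition $\mathcal{D}_x\subset K$. Hence $\mathcal{K}_x=\{K\subset\mathcal{V}:\mathcal{D}_x\subset K\}$.

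Finally I would take the intersection: $\bigcap_{K\in\mathcal{K}_x}K=\bigcap_{K\supset\mathcal{D}_x}K=\mathcal{D}_x$, because $\mathcal{D}_x$ itself lies in the family (it is its smallest member) and is contained in every other member. This yields the claimed identity $\mathcal{D}_x=\bigcap_{K\in\mathcal{K}_x}K$.

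I do not expect a genuine obstacle here; the only point requiring care is the indicator-function computation, where the asymmetry built into $(\,\cdot\,)_+$ and the membership $x\in\mathcal{D}_x$ are exactly what make the characterization of $\mathcal{K}_x$ clean. The statement is essentially a direct verification that the construction~\eqref{CD}, applied to the critical map of $T_{\mathcal{D}}$, returns the original active neighborhood system $\mathcal{D}$.
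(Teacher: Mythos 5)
Your proposal is correct and follows essentially the same route as the paper: both reduce to the computation $\mathcal{Z}_{\mathcal{D}}(\mathds{1}_{\!K})=\{x\in K:\ \mathcal{D}_x\subset K\}\cup K^{\mathrm{c}}$, from which $\mathcal{K}_x=\{K:\ \mathcal{D}_x\subset K\}$ and the intersection identity follow (the paper phrases the last step as two inclusions, noting in particular that $\mathcal{D}_x\in\mathcal{K}_x$, which is exactly your observation that $\mathcal{D}_x$ is the smallest member of the family).
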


\begin{proof}
For any $f\in\RR^\mathcal{V}$, recall that
\bq
\mathcal{Z}_{\mathcal{D}}[f]&=&\{x\in \mathcal{V}\st T_{\mathcal{D}}[f](x)=0\} \,=\, \{x\in \mathcal{V}\st \max_{y\in \mathcal{D}_x}(f(x)-f(y))_+=0\}\\
&=&\{x\in \mathcal{V}\st \fo y\in \mathcal{D}_x,\, f(y)\geq f(x)\}.
\eq
\par
In particular taking $f=\ind{K}$ with $K\in\cP(\cV)^*$, we get
\[
\mathcal{Z}_{\mathcal{D}}(\mathds{1}_{\!K})=\{x\in \mathcal{V}\st \fo y\in \mathcal{D}_x,\, \ind{K}(y)\geq \ind{K}(x)\} =\{x\in K\st \mathcal{D}_x\subset K\} \cup K^c.
\]
\par
Fix $x\in \mathcal{V}$ and consider $K\in\cK_x$.
Since $x\in K$ and $x\in \mathcal{Z}_{\mathcal{D}}(\mathds{1}_{\!K})$, we deduce that $\mathcal{D}_x\subset K$.
It follows that
\bq
\mathcal{D}_x&\subset&\bigcap_{K\in\cK_x} K.\eq
\par
To get the reverse implication, it is sufficient to check that $\mathcal{D}_x\in\cK_x$.
Note that
\[
\mathcal{Z}_{\mathcal{D}}[\mathds{1}_{\mathcal{D}_x}]=\{y\in \mathcal{D}_x:\, \mathcal{D}_y\subset \mathcal{D}_x\}\cup \mathcal{D}_x^c. 
\]
Therefore, $x\in\mathcal{Z}_\mathcal{D}[\mathds{1}_{\mathcal{D}_x}]$. 
Since we also have $x\in \mathcal{D}_x$, we deduce that $\mathcal{D}_x\in\cK_x$.
\end{proof}

\medskip 
The above lemma justifies the introduction of the objects $\mathcal{K}_{x}$, $\mathcal{D}_{x}$, for $x\in \mathcal{V}$, for any mapping $\mathcal{Z}:\,\mathbb{R}^{\mathcal{V}}\rightarrow \mathcal{P(\mathcal{V})}^{\ast }$ in \eqref{CD}
by analogy to \eqref{eq:22}. Denote by $\mathcal{\hat{Z}}$ the set of mappings $\mathcal{Z}\,:\,\mathbb{R}^{\mathcal{V}}\rightarrow \mathcal{P(\mathcal{V})}^{\ast }$ satisfying $\mathcal{Z}[\mathds{1}]=\mathcal{V}$ and by $\mathcal{\hat{Z}}_{\mathcal{E}}$ the set of critical maps $\mathcal{Z}_{\mathcal{D}}$
associated to $T_{\mathcal{D}}$ with $\mathcal{D}\in \mathcal{E}(\mathcal{V})
$. Let $\mathcal{Q}$ be the mapping $\mathcal{\hat{Z}}\ni \mathcal{Z}\mapsto 
\mathcal{Z}_{\mathcal{D}}\in \mathcal{\hat{Z}}_{\mathcal{E}}$ considered
above. Lemma~\ref{cKD} shows that $\mathcal{Q}^{2}=\mathcal{Q}$, that is, $\mathcal{Q}$ is a kind of non-linear projection.\smallskip 

Let us show that in Proposition~\ref{verif} we don't need to assume ($\mathcal{H}$) if the critical map is of the form $\mathcal{Z}_\mathcal{D}$:

\begin{lemma}
\label{vD} For any $\mathcal{D}\in \mathcal{E}(\mathcal{V})$, $\mathcal{Z}_{\mathcal{D}}$ satisfies $(\mathcal{H})$ and thus $(Z5)$.
\end{lemma}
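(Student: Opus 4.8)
The plan is to recognize this lemma as essentially a repackaging of Lemma~\ref{cKD}, combined with Proposition~\ref{verif}. First I would record that $T_{\mathcal{D}}$ is itself a homogeneous descent modulus: it is the instance of the semiglobal slope of Proposition~\ref{prop-m-slope} obtained by setting $m(x,y)=1$ for $x\neq y$ (so that $\Delta_f^+(x,y)=(f(x)-f(y))_+$) and taking each $\mathcal{D}_x$ as the prescribed active neighborhood, while homogeneity is immediate from $T_{\mathcal{D}}[rf]=rT_{\mathcal{D}}[f]$ for $r\geq 0$. Thus Proposition~\ref{verif} will be applicable to $\mathcal{Z}_{\mathcal{D}}=\mathcal{Z}_{T_{\mathcal{D}}}$ as soon as $(\mathcal{H})$ is verified.

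Next I would verify $(\mathcal{H})$. For $T=T_{\mathcal{D}}$ the objects $D_x(T)$ and $\mathcal{K}_x(T)$ of \eqref{eq:22} coincide verbatim with $\mathcal{D}_x(\mathcal{Z}_{\mathcal{D}})$ and $\mathcal{K}_x(\mathcal{Z}_{\mathcal{D}})$ of \eqref{CD}, so $(\mathcal{H})$ amounts to $D_x(\mathcal{Z}_{\mathcal{D}})\in\mathcal{K}_x(\mathcal{Z}_{\mathcal{D}})$ for every $x\in\mathcal{V}$. Lemma~\ref{cKD} already shows $D_x(\mathcal{Z}_{\mathcal{D}})=\bigcap_{K\in\mathcal{K}_x}K=\mathcal{D}_x$, so it remains only to check the membership $\mathcal{D}_x\in\mathcal{K}_x$. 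This is precisely the final step in the proof of Lemma~\ref{cKD}: from the computation $\mathcal{Z}_{\mathcal{D}}[\mathds{1}_{\mathcal{D}_x}]=\{y\in\mathcal{D}_x:\mathcal{D}_y\subset\mathcal{D}_x\}\cup\mathcal{D}_x^{\mathrm{c}}$ one reads off that $x\in\mathcal{D}_x$ together with $\mathcal{D}_x\subset\mathcal{D}_x$ forces $x\in\mathcal{Z}_{\mathcal{D}}[\mathds{1}_{\mathcal{D}_x}]$, and combined with $x\in\mathcal{D}_x$ this yields $\mathcal{D}_x\in\mathcal{K}_x$. Hence $D_x(T)=\mathcal{D}_x\in\mathcal{K}_x(T)$ for all $x$, i.e.\ $(\mathcal{H})$ holds.

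Finally, having established $(\mathcal{H})$ for the homogeneous descent modulus $T_{\mathcal{D}}$, I would invoke Proposition~\ref{verif} to conclude that $\mathcal{Z}_{\mathcal{D}}$ satisfies $(Z1)$--$(Z5)$, in particular $(Z5)$. I do not expect a genuine obstacle: the whole content sits already inside Lemma~\ref{cKD} and Proposition~\ref{verif}, and the only point requiring care is the bookkeeping that the two sets of formulas \eqref{eq:22} and \eqref{CD} agree when $\mathcal{Z}=\mathcal{Z}_{T_{\mathcal{D}}}$, so that the membership extracted from Lemma~\ref{cKD} is exactly $(\mathcal{H})$. If one preferred to avoid citing Proposition~\ref{verif}, one could prove $(Z5)$ directly: the inclusion $\mathcal{K}_x\subset\{K:\mathcal{D}_x\subset K\}$ is the identity $D_x=\mathcal{D}_x$ once more, while for the reverse inclusion any $K\supset\mathcal{D}_x$ satisfies $(\mathds{1}_K(x)-\mathds{1}_K(z))_+\leq(\mathds{1}_{\mathcal{D}_x}(x)-\mathds{1}_{\mathcal{D}_x}(z))_+$ for every $z\in\mathcal{V}$, so monotonicity of $T_{\mathcal{D}}$ and the equality $T_{\mathcal{D}}[\mathds{1}_{\mathcal{D}_x}](x)=0$ (which is $(\mathcal{H})$) give $x\in\mathcal{Z}_{\mathcal{D}}(\mathds{1}_K)$, that is $K\in\mathcal{K}_x$.
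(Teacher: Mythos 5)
Your proof is correct and follows essentially the same route as the paper: use Lemma~\ref{cKD} to identify $\bigcap_{K\in\mathcal{K}_x}K$ with $\mathcal{D}_x$, reduce $(\mathcal{H})$ to the membership $\mathcal{D}_x\in\mathcal{K}_x$ (equivalently $T_{\mathcal{D}}[\mathds{1}_{\mathcal{D}_x}](x)=0$, which the paper verifies by a one-line computation and you read off from the proof of Lemma~\ref{cKD}), and then obtain $(Z5)$ from Proposition~\ref{verif}. Your explicit check that $T_{\mathcal{D}}$ is a homogeneous descent modulus, and the optional direct verification of $(Z5)$, are welcome but inessential additions to the same argument.
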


\begin{proof}

Thanks to Lemma~\ref{cKD}, the family $\mathcal{\{\bigcap }_{K\in \mathcal{K}_{x}}K\}_{x\in \mathcal{V}}$ constructed in \eqref{CD} coincides
with the active neighborhood system $\mathcal{D=}\{\mathcal{D}_{x}\}_{x\in 
\mathcal{V}}$  in the definition of $\mathcal{Z}_{\mathcal{D}}$. Thus to
check $(\mathcal{H})$, it suffices to show that $x\in \mathcal{Z}_{\mathcal{D}}[\ind{\mathcal{D}_{x}}]$, for any $x\in \mathcal{V}$, or equivalently $T_{\mathcal{D}}[\mathds{1}_{\!\mathcal{D}_{x}}](x)=0$. A direct computation
gives:
\begin{equation*}
T_{\mathcal{D}}[\mathds{1}_{\!\mathcal{D}_{x}}](x)=\max_{z\in \mathcal{D}_{x}}\mathds{1}_{\!\mathcal{D}_{x}}(x)-\mathds{1}_{\!\mathcal{D}_{x}}(z)=0
\end{equation*}

Condition (Z5) then follows from Proposition~\ref{verif}. 
\end{proof}

Here is the first step towards Theorem~\ref{critmap}:

\begin{proposition}
\label{prop:ReductionToSets} Let $\mathcal{Z}:\,\mathbb{R}^{\mathcal{V}}\rightarrow \mathcal{P(\mathcal{V})}^{\ast }$ satisfying $(Z1)$--$(Z3)$ and $\mathcal{Z}[\mathds{1}_{\mathcal{V}\!}]=\mathcal{V}$. Let $\mathcal{D}$ be
constructed as in \eqref{CD} and define $\mathcal{Z}_{\mathcal{D}}$ the
critical map associated to $T_{\mathcal{D}}$ given in \eqref{eq:21}. Assume
that 
\begin{equation*}
\forall \ K\subset \mathcal{V},\qquad \mathcal{Z}(\mathds{1}_{K})=\mathcal{Z}_{\mathcal{D}}(\mathds{1}_{K})
\end{equation*}

Then we have $\mathcal{Z}=\mathcal{Z}_{\mathcal{D}}$.
\end{proposition}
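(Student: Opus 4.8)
The plan is to upgrade the assumed agreement of $\mathcal{Z}$ and $\mathcal{Z}_{\mathcal{D}}$ on characteristic functions $\mathds{1}_K$ to agreement on \emph{all} of $\mathbb{R}^{\mathcal{V}}$, by induction on the number of distinct values taken by $f$. The structural axiom that drives the induction is (Z3): it splits $\mathcal{Z}[f]$ along a threshold $r$ into a contribution from the lower truncation $\phi_r(f)=r\wedge f$ on $[f\leq r]$ and a contribution from the upper truncation $\varphi_r(f)=r\vee f$ on $[f>r]$. Choosing $r$ to be an \emph{intermediate} value of $f$ makes both truncations take strictly fewer distinct values, which is exactly what feeds the inductive hypothesis.

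Before running the induction I would record that \emph{both} maps satisfy (Z1)--(Z3). For $\mathcal{Z}$ this is part of the hypothesis. For $\mathcal{Z}_{\mathcal{D}}$ it is not assumed, but it follows from the earlier results: by Lemma~\ref{vD} the map $\mathcal{Z}_{\mathcal{D}}$ satisfies $(\mathcal{H})$, and then Proposition~\ref{verif}, applied to the homogeneous descent modulus $T_{\mathcal{D}}$, yields (Z1)--(Z5), in particular (Z3). This is the step I expect to require the most care, since the whole argument hinges on applying the same decomposition (Z3) to $\mathcal{Z}_{\mathcal{D}}$ as to $\mathcal{Z}$. I would also check there is no circularity: the property $(\mathcal{H})$ for $\mathcal{Z}_{\mathcal{D}}$ is established directly in Lemma~\ref{vD} through the computation $T_{\mathcal{D}}[\mathds{1}_{\mathcal{D}_x}](x)=0$, with no appeal to the present proposition.

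For the base case I would treat functions taking at most two values. A constant function $f\equiv c$ satisfies $\mathcal{Z}[f]=\mathcal{Z}[0]=\mathcal{Z}[\mathds{1}_{\emptyset}]$ by (Z1), and the same identity holds for $\mathcal{Z}_{\mathcal{D}}$, so the assumed agreement on $\mathds{1}_{\emptyset}$ transfers. A function taking exactly two values $a<b$ can be written $f=a+(b-a)\mathds{1}_K$ with $K=[f=b]$; then (Z1) followed by (Z2) gives $\mathcal{Z}[f]=\mathcal{Z}[\mathds{1}_K]$ and, identically, $\mathcal{Z}_{\mathcal{D}}[f]=\mathcal{Z}_{\mathcal{D}}[\mathds{1}_K]$, and these coincide by hypothesis.

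For the inductive step, suppose $f$ takes $n\geq 3$ distinct values $v_1<\cdots<v_n$ and that the two maps already agree on every function with fewer than $n$ distinct values. Pick an intermediate threshold $r=v_k$ with $2\leq k\leq n-1$, which is possible precisely because $n\geq 3$. Then $\phi_r(f)$ takes the $k$ values $v_1,\dots,v_k$ and $\varphi_r(f)$ takes the $n-k+1$ values $v_k,\dots,v_n$, both counts being strictly less than $n$; hence the inductive hypothesis yields $\mathcal{Z}[\phi_r(f)]=\mathcal{Z}_{\mathcal{D}}[\phi_r(f)]$ and $\mathcal{Z}[\varphi_r(f)]=\mathcal{Z}_{\mathcal{D}}[\varphi_r(f)]$. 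Since the sets $[f\leq r]$ and $[f>r]$ depend only on $f$, applying (Z3) to $\mathcal{Z}$ and to $\mathcal{Z}_{\mathcal{D}}$ and comparing the two decompositions term by term on the common partition $\mathcal{V}=[f\leq r]\sqcup[f>r]$ gives $\mathcal{Z}[f]=\mathcal{Z}_{\mathcal{D}}[f]$. This closes the induction and establishes $\mathcal{Z}=\mathcal{Z}_{\mathcal{D}}$.
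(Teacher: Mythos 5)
Your proof is correct and follows essentially the same route as the paper: induction on the number of distinct values of $f$, with constants and two-valued functions handled via (Z1)--(Z2) and the hypothesis on characteristic functions, and the inductive step driven by applying (Z3) with an intermediate threshold to both $\mathcal{Z}$ and $\mathcal{Z}_{\mathcal{D}}$ (the latter's axioms being secured beforehand via Lemma~\ref{vD} and Proposition~\ref{verif}, exactly as the paper does). The only cosmetic difference is that the paper fixes the median value $r=f_{\lfloor (n+1)/2\rfloor}$ while you allow any intermediate one; both choices make the two truncations take strictly fewer values.
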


\begin{proof}
From Proposition~\ref{verif} and Lemma~\ref{fac}, $\mathcal{Z}_{\mathcal{D}}$
also satisfies (Z1)--(Z3) and $\mathcal{Z}_{\mathcal{D}}[\mathds{1}_{\mathcal{V}\!}]=\mathcal{V}$.

Let $f\in \mathbb{R}^{\mathcal{V}}$. We prove that $\mathcal{Z}[f]=
\mathcal{Z}_{\mathcal{D}}[f]$ via induction over
the number $n\in \mathbb{N}$ of values taken by $f$.

$\bullet $ We begin with the case where $n=1$, that is, $f$ is constant.
Denote by $a\in \mathbb{R}$ the value of $f$. Taking into account condition (Z1) and the fact that $\mathcal{Z}[\mathds{1}_{\mathcal{V}\!}]=\mathcal{V}$, we obtain 
\begin{equation*}
\mathcal{Z}[f]=\mathcal{Z}[f-a+1]=\mathcal{Z}[\mathds{1}_{\mathcal{V}}]=\mathcal{Z}_{\mathcal{D}}[\mathds{1}_{\mathcal{V}}]=\mathcal{V}=\mathcal{Z}_{\mathcal{D}}[f]
\end{equation*}

$\bullet $ Consider the case where $n=2$ and let $f(\mathcal{V})=\{a,b\}$
with $a<b$. Set $K:=[f=b]$. Using (Z1) and (Z2), we get 
\begin{eqnarray*}
\mathcal{Z}[f] =\mathcal{Z}\left[ \frac{f-a}{b-a}\right]  =\mathcal{Z}(\mathds{1}_{K})=\mathcal{Z}_{\mathcal{D}}(\mathds{1}_{K})=
\mathcal{Z}_{\mathcal{D}}[f].
\end{eqnarray*}

$\bullet $ Consider the case where $n>2$, assuming that $\mathcal{Z}[g]=
\mathcal{Z}_{\mathcal{D}}[g]$ for all $g\in \mathbb{R}^{\mathcal{D}}$ taking
at most $n-1$ values. Write $f_{1}<f_{2}<\cdots <f_{n}$ the values taken by $f$. Take $k=\lfloor\frac{n+1}{2}\rfloor$ (integer part), set $r=f_{k}$ and
\begin{eqnarray*}
g_{-} &\df&\phi _{r}\circ f \\
g_{+} &\df&\varphi _{r}\circ f
\end{eqnarray*}

By the choice of $r$, both $g_-$ and $g_+$ take at most $n-1$ values.

Condition (Z3) then yields
\begin{eqnarray*}
\mathcal{Z}[f] &=&\left( \mathcal{Z}[g_{-}]\cap \lbrack f\leq r]\right)
\sqcup \left( \mathcal{Z}[g_{+}]\cap \lbrack f>r]\right)  \\
&=&\left( \mathcal{Z}_{\mathcal{D}}[g_{-}]\cap \lbrack f\leq r]\right)
\sqcup \left( \mathcal{Z}_{\mathcal{D}}[g_{+}]\cap \lbrack f>r]\right) =
\mathcal{Z}_{\mathcal{D}}[f].
\end{eqnarray*}
as desired.
\end{proof}

Having established Proposition~\ref{prop:ReductionToSets}, the following
result finishes the proof of Theorem~\ref{critmap}:

\begin{proposition}
Let $\mathcal{Z}:\,\mathbb{R}^{\mathcal{V}}\rightarrow \mathcal{P(\cV)}^{\ast }$  be a mapping satisfying $(Z1)$--$(Z5)$ and let $\mathcal{D}=(\mathcal{D}_{x})_{x\in \cV}\in \mathcal{E}(\mathcal{V})$ be as in~\eqref{CD}.
Then we have 
\begin{equation*}
\forall \ K\subset \mathcal{V}:\qquad \mathcal{Z}[\mathds{1}_{K}]=\mathcal{Z}_{\mathcal{D}}[\mathds{1}_{K}]
\end{equation*}
\end{proposition}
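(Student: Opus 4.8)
The plan is to reduce everything to the explicit description of $\mathcal{Z}_{\mathcal{D}}$ on indicator functions and then to match it against $\mathcal{Z}$ using only (Z4), (Z5), and the very definition of $\mathcal{D}_x$ as an intersection. First I would record the target formula. Evaluating $T_{\mathcal{D}}$ at $\mathds{1}_K$ and using $x\in\mathcal{D}_x$ gives, for every $K\subset\mathcal{V}$,
\[
\mathcal{Z}_{\mathcal{D}}[\mathds{1}_K]=\comp{K}\cup\{x\in K:\mathcal{D}_x\subset K\},
\]
exactly as computed in Lemma~\ref{cKD}: a point $x\in\comp{K}$ has $\mathds{1}_K(x)=0\leq\mathds{1}_K(y)$ for all $y$, hence is $T_{\mathcal{D}}$-critical, while a point $x\in K$ is critical precisely when $\mathds{1}_K(y)\geq 1$ for every $y\in\mathcal{D}_x$, that is, when $\mathcal{D}_x\subset K$.

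With this formula in hand, it remains to prove $\mathcal{Z}[\mathds{1}_K]=\comp{K}\cup\{x\in K:\mathcal{D}_x\subset K\}$, and I would do so by splitting according to whether a point lies in $K$ or in $\comp{K}$. Condition (Z4) gives directly $\comp{K}\subset\mathcal{Z}[\mathds{1}_K]$, which settles the complement and matches the corresponding piece of $\mathcal{Z}_{\mathcal{D}}[\mathds{1}_K]$. It therefore suffices to show that for $x\in K$ one has $x\in\mathcal{Z}[\mathds{1}_K]\iff\mathcal{D}_x\subset K$.

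The forward implication is immediate from the definition of $\mathcal{D}_x$: if $x\in K\cap\mathcal{Z}[\mathds{1}_K]$, then $K\in\mathcal{K}_x$, whence $\mathcal{D}_x=\bigcap_{K'\in\mathcal{K}_x}K'\subset K$. The converse is where (Z5) enters: if $x\in K$ and $\mathcal{D}_x\subset K$, then (Z5) asserts $\mathcal{K}_x=\{K'\subset\mathcal{V}:\mathcal{D}_x\subset K'\}$, so $K\in\mathcal{K}_x$, which by definition means $x\in K\cap\mathcal{Z}(\mathds{1}_K)$, and in particular $x\in\mathcal{Z}[\mathds{1}_K]$. Combining the two implications with the complement case yields the claimed equality on every indicator function.

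I do not expect a genuine obstacle here: the statement is essentially a bookkeeping identity once the formula for $\mathcal{Z}_{\mathcal{D}}[\mathds{1}_K]$ is available. The only point that requires care, and the place where the hypotheses are actually consumed, is the converse inclusion for $x\in K$, where one must invoke (Z5) rather than the mere definition of $\mathcal{D}_x$; without (Z5) the condition $\mathcal{D}_x\subset K$ would only be \emph{necessary} for criticality, not sufficient. Everything else is the direct translation between the assertion ``$x$ is $T_{\mathcal{D}}$-critical for $\mathds{1}_K$'' and the set-theoretic condition $\mathcal{D}_x\subset K$.
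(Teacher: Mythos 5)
Your proof is correct and follows essentially the same route as the paper: both use the explicit formula $\mathcal{Z}_{\mathcal{D}}[\mathds{1}_{K}]=\comp{K}\cup\{x\in K:\mathcal{D}_{x}\subset K\}$, settle $\comp{K}$ via (Z4), obtain $\mathcal{D}_{x}\subset K$ from the intersection definition when $x\in K\cap\mathcal{Z}(\mathds{1}_{K})$, and invoke (Z5) for the converse. The only cosmetic difference is that the paper organizes the argument as a three-way partition $\mathcal{V}=\comp{K}\sqcup K^{\prime}\sqcup(K\setminus K^{\prime})$ and runs it symmetrically for $\mathcal{Z}$ and $\mathcal{Z}_{\mathcal{D}}$ (via Lemma~\ref{vD}), whereas you compute $\mathcal{Z}_{\mathcal{D}}[\mathds{1}_{K}]$ once and prove a pointwise equivalence on $K$.
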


\begin{proof}
From (Z4), we have 
\begin{equation*}
K^{\mathrm{c}}\cap \mathcal{Z}(\mathds{1}_{K})=K^{\mathrm{c}}=K^{\mathrm{c}}\cap \mathcal{Z}_{\mathcal{D}}(\mathds{1}_{K}).
\end{equation*}
Now, let $K^{\prime }=\{x\in K\ :\ \mathcal{D}_{x}\subset K\}$. For every $x\in K^{\prime }$, due to (Z5), we have $K\in \mathcal{K}_{x}$, so $x\in \mathcal{Z}[\mathds{1}_{K}]$. Since this is true for any $x\in
K^{\prime }$, we get $K^{\prime }\subset \mathcal{Z}[\mathds{1}_{K}]$.

According to Lemma~\ref{vD}, $\mathcal{Z}_{\mathcal{D}}$ also satisfies 
(Z5), and it follows as above that $K^{\prime }\subset \mathcal{Z}_{\mathcal{D}}[\ind{K}]$. We deduce 
\begin{equation*}
K^{\prime }\cap \mathcal{Z}(\mathds{1}_{K})=K^{\prime }\ =\ K^{\prime }\cap 
\mathcal{Z}_{\mathcal{D}}(\mathds{1}_{K}).
\end{equation*}

Finally, let $K\diagdown K^{\prime }=\{x\in K\ :\ \mathcal{D}_{x}\setminus
K\neq \emptyset \}$. For every $x\in K\diagdown K^{\prime }$, since $\mathcal{D}_{x}\not\subset K$, we have $K\notin \mathcal{K}_{x}$, due to the
definition of $\mathcal{K}_{x}$ in \eqref{CD}. Since $x\in K$, the only
possibility is that $x\notin \mathcal{Z}(\mathds{1}_{K})$.

The same reasoning applies to $\mathcal{Z}_{D}$ (recalling Lemma~\ref{cKD})
and we get 
\begin{equation*}
K\diagdown K^{\prime }\cap \mathcal{Z}(\mathds{1}_{K})=\emptyset \ =\
K\diagdown K^{\prime }\cap \mathcal{Z}_{\mathcal{D}}(\mathds{1}_{K})
\end{equation*}

Since $\mathcal{V}=K^{\mathrm{c}}\sqcup K^{\prime }\sqcup( K\diagdown
K^{\prime })$, we conclude that 
\begin{equation*}
\mathcal{Z}(\mathds{1}_{K})=K^{\mathrm{c}}\sqcup K^{\prime }=\mathcal{Z}_{\mathcal{D}}(\mathds{1}_{K}),
\end{equation*}
finishing the proof.
\end{proof}

\begin{remark}
Note that (Z5) was only used to prove that $K^{\prime }\subset \mathcal{Z}(\mathds{1}_{K})$. Thus, when (Z5) is not verified, the constructed modulus
of descent $T_{\mathcal{D}}$ might enlarge the critical map $\mathcal{Z}_{\mathcal{D}}$ with respect to $\mathcal{Z}$, as illustrated by Example~\ref{exafin} below.
\end{remark}

The following two examples show that  homogeneity and~(Z4) are necessary assumptions.

\begin{example}[A non-homogeneous descent modulus failing (Z2)]
Let $\varepsilon >0$ and consider the operator $T_{\varepsilon }:\mathbb{R}^{\mathcal{V}}\rightarrow \mathbb{R}_{+}^{\mathcal{V}}$ given by 
%\begin{eqnarray*}
\[
T_{\varepsilon }[f](x) \df \left\{ 
\begin{array}{cl}
f(x)-\min f & \text{ if }f(x)>\min f+\varepsilon  \\[2mm]
0 & \text{ if }f(x)\leq \min f+\varepsilon .
\end{array}
\right.
\,\,=\,\,\phi _{\epsilon }(f(x)-\min f)
\]
%\end{eqnarray*}
where the mapping $\phi _{\epsilon }$ is defined for $r\geq 0$ by 
\begin{equation*}
\phi _{\epsilon }(r)\df\left\{ 
\begin{array}{ll}
0,\; & \text{if }r\in \lbrack 0,\epsilon ] \\ 
r,\; & \text{if }r>\epsilon. 
\end{array}
\right. 
\end{equation*}
We claim that $T_{\varepsilon }$ is a descent modulus. 
\begin{itemize}
\item Let $f\in \mathbb{R}^{\mathcal{V}}$. For every $x\in \argmin f$, we
have that $T_{\varepsilon }[f](x)=0$, and so $T_{\varepsilon }$ preserves
global minima.

\item Let $f,g\in \mathbb{R}^{\mathcal{V}}$ and $x\in \mathcal{V}$ such that 
\begin{equation*}
(f(x)-f(z))_{+}\geq (g(x)-g(z))_{+},\quad \forall z\in \mathcal{V}.
\end{equation*}
On the one hand, if $f(x)\leq \min f + \varepsilon$, we have that
\[
\varepsilon\geq (f(x)-f(z))_{+}\geq (g(x)-g(z))_{+},\quad \forall z\in \mathcal{V},
\]
and so, $g(x)\leq \min g + \varepsilon$ as well. Then $T_{\varepsilon }[f](x) = T_{\varepsilon }[g](x)$. On the other hand, if $f(x)\geq \min f + \varepsilon$, by  taking $z^{\ast }\in \argmin g$, we have that 
\begin{align*}
T_{\varepsilon }[f](x)& =f(x)-\min f\geq (f(x)-f(z^{\ast }))_{+} \\
& \geq (g(x)-g(z^{\ast }))_{+}=g(x)-\min g \geq T_{\varepsilon }[g](x).
\end{align*}
Thus, $T_{\varepsilon }$ is monotone.

\item Let $f\in \mathbb{R}^{\mathcal{V}}$ and $r>1$. Then, for every $x\in 
\mathcal{V}$, 
\begin{equation*}
T_{\varepsilon }[f](x)>0\implies \epsilon \leq T_{\varepsilon
}[f](x)=f(x)-\min f<r(f(x)-\min f)=rf(x)-\min rf=T_{\varepsilon }[rf](x).
\end{equation*}
Thus, $T_{\varepsilon }$ is scalar-monotone.
\end{itemize}

Then, by definition, $T_{\varepsilon }$ is a descent modulus. However,
choose $f\in \mathbb{R}^{\mathcal{V}}$ such that $\alpha =\max f-\min
f>\varepsilon $, and choose $r=\frac{\varepsilon }{\alpha }$. Then, we have
that $\mathcal{Z}_{T}(f)\neq \mathcal{V}$ but 
\begin{equation*}
\forall x\in \mathcal{V},\quad rf(x)-\min rf\leq r(\max f-\min
f)=\varepsilon \implies \mathcal{Z}_{T}(rf)=\mathcal{V}.
\end{equation*}
\hfill$\diamond$
\end{example}

\begin{example}[A family of homogeneous moduli of descent failing~(Z4)]

\label{exafin} Let $\mathcal{D}^{\prime }=(\mathcal{D}_{x}^{\prime })_{x\in
\cV}\in \mathcal{E}(\mathcal{V)}$ such that there exists $\bar{x}\in \cV$ where $|\mathcal{D}_{\bar{x}}^{\prime }|\geq 3$, and let $T:\mathbb{R}^{\mathcal{V}
}\rightarrow \mathbb{R}_{+}^{\mathcal{V}}$ be the operator given by 
\begin{equation*}
T[f](x)\df\left\{ 
\begin{array}{cl}
\left( f(x)-\displaystyle\max_{y\in \mathcal{D}_{x}^{\prime }\setminus
\{x\}}f(y)\right) _{+} & \text{ if }\mathcal{D}_{x}^{\prime }\neq \{x\} \\ 
0 & \text{ if }\mathcal{D}_{x}^{\prime }=\{x\}.
\end{array}
\right. 
\end{equation*}
Clearly $T$ is homogeneous and preserves global minima. Let us prove that $T$
is monotone: let $f,g\in \mathbb{R}^{\mathcal{V}}$ and $x\in \mathcal{V}$
such that 
\begin{equation*}
(f(x)-f(z))_{+}\geq (g(x)-g(z))_{+},\quad \forall z\in \mathcal{V}.
\end{equation*}
If $\mathcal{D}_{x}^{\prime }=\{x\}$, then $T[f](x)=0\geq 0=T[g](x)$. If $\mathcal{D}_{x}^{\prime }\neq \{x\}$, then there exist $y^{\ast }\in 
\mathcal{D}_{x}^{\prime }\setminus \{x\}$ such that $f(y^{\ast })=\max_{y\in 
\mathcal{D}_{x}^{\prime }\setminus \{x\}}f(y)$. Then, 
\begin{equation*}
T[f](x)=(f(x)-f(y^{\ast }))_{+}\geq (g(x)-g(y^{\ast }))_{+}\geq \left(
g(x)-\max_{y\in \mathcal{D}_{x}^{\prime }\setminus \{x\}}g(y)\right)
_{+}=T[g](x).
\end{equation*}
Thus, $T$ is monotone and therefore it is a descent modulus. Now, let $(\mathcal{K}_{x})_{x\in \mathcal{V}}$ and $\mathcal{D}=(\mathcal{D}_{x})_{x\in \mathcal{V}}$ constructed as in \eqref{CD} for $\mathcal{Z}_{T}$. Then:

\begin{itemize}
\item If $\mathcal{D}_{x}^{\prime }=\{x\}$, then $\mathcal{K}_{x}=\{K\in \cP(\mathcal{V})^*\ :\ x\in K\}$ and so $\mathcal{D}_{x}=\mathcal{D}_{x}^{\prime }
$.

\item If $\mathcal{D}_{x}^{\prime }=\{x,y\}$ for some $y\neq x$, then $\mathcal{K}_{x}=\{K\in \mathcal{V}\,:\,\{x,y\}\subset K\}$. Indeed,
consider $K\in \mathcal{K}_{x}$, we have $x\in K$ and $x\in \mathcal{Z}_{T}(K)$. We compute 
\begin{equation*}
T(\mathds{1}_{\!K})(x)=(\mathds{1}_{\!K}(x)-\mathds{1}_{\!K}(y))_{+}=1-\mathds{1}_{\!K}(y)
\end{equation*}
so for this expression to vanish, we must have $y\in K$. Conversely, if $\{x,y\}\subset K$, then $x\in K$ and 
\begin{equation*}
T(\mathds{1}_{\!K})(x)=(\mathds{1}_{\!K}(x)-\mathds{1}_{\!K}(y))_{+}=0
\end{equation*}
so $K\in \mathcal{K}_{x}$. We deduce $\mathcal{D}_{x}=\{x,y\}$ and so $\mathcal{D}_{x}=\mathcal{D}_{x}^{\prime }$.

\item If $|\mathcal{D}_{x}^{\prime }|\geq 3$, we have that there is $y,z\in 
\mathcal{D}_{x}^{\prime }\setminus \{x\}$ with $y\neq z$ such that $\{x,y\},\{x,z\}\in \mathcal{K}_{x}$. Thus, $\mathcal{D}_{x}\subset
\{x,y\}\cap \{x,z\}=\{x\}\neq \mathcal{D}_{x}^{\prime }$, it follows that $\mathcal{D}_{x}=\{x\}$. Furthermore, $\{x\}\notin \mathcal{K}_{x}$ since $T[\mathds{1}_{\!\{x\}}](x)=1$.
\end{itemize}

Thus, since $|\mathcal{D}_{\bar{x}}^{\prime }|\geq 3$, $T$ fails (Z4).\hfill$\diamond$
\end{example}

%\smallskip

%\begin{center}
\noindent\rule{4cm}{1.2pt}
%\end{center}

\medskip

\textbf{Acknowledgement} A major part of this work has been realized during a research stay of the first and the third author to Toulouse School of Economics, France (May 2022). These two authors wish to thank J. Bolte and the host institute for hospitality. The first author also acknowledges support from the Austrian Science Fund (FWF, P-36344-N).

%\smallskip\newline

\begin{center}
\noindent\rule{4cm}{1.4pt}
\end{center}

%\nocite{*}

%\bibliographystyle{plain}
%\bibliography{BibDetermination_v24.bib}

\begin{thebibliography}{99}
	
	\bibitem{AGS2008}
	L.~Ambrosio, N.~Gigli, and G.~Savar\'{e}.
	\newblock {\em Gradient flows in metric spaces and in the space of probability
		measures}.
	\newblock Lectures in Mathematics. Birkh\"{a}user, second edition, 2008.
	
	\bibitem{Baillon2018}
	J.-B. Baillon.
	\newblock Personal communication, October 2018.
	
	\bibitem{BGL2014}
	D.~Bakry, I.~Gentil, and M.~Ledoux.
	\newblock {\em Analysis and geometry of {M}arkov diffusion operators}.
	\newblock Grundlehren der mathematischen Wissenschaften 346. Springer, Cham,
	2014.
	
	\bibitem{BD2015}
	J.~Benoist and A.~Daniilidis.
	\newblock Subdifferential representation of convex functions: refinements and
	applications.
	\newblock {\em J. Convex Anal.}, 12(2):255--265, 2005.
	
	\bibitem{Blumenson1960Coordinates}
	L.~E. Blumenson.
	\newblock Classroom {N}otes: {A} {D}erivation of {$n$}-{D}imensional
	{S}pherical {C}oordinates.
	\newblock {\em Amer. Math. Monthly}, 67(1):63--66, 1960.
	
	\bibitem{BCD2018}
	T.~Z. Boulmezaoud, P.~Cieutat, and A.~Daniilidis.
	\newblock Gradient flows, second-order gradient systems and convexity.
	\newblock {\em SIAM J. Optim.}, 28(3):2049--2066, 2018.
	
	\bibitem{BrezisOperateurs1973}
	H.~Br\'{e}zis.
	\newblock {\em Op\'{e}rateurs maximaux monotones et semi-groupes de
		contractions dans les espaces de {H}ilbert}.
	\newblock North-Holland Publishing Co., 1973.
	
	\bibitem{CDM1993Deformation}
	J.~N. Corvellec, M.~Degiovanni, and M.~Marzocchi.
	\newblock Deformation properties for continuous functionals and critical point
	theory.
	\newblock {\em Topol. Methods Nonlinear Anal.}, 1(1):151--171, 1993.
	
	\bibitem{DS2022}
	A.~Daniilidis and D.~Salas.
	\newblock A determination theorem in terms of the metric slope.
	\newblock {\em Proc. Amer. Math. Soc.}, 150(10):4325--4333, 2022.
	
	\bibitem{GMT1980}
	E.~De~Giorgi, A.~Marino, and M.~Tosques.
	\newblock Problems of evolution in metric spaces and maximal decreasing curve.
	\newblock {\em Atti Accad. Naz. Lincei Rend. Cl. Sci. Fis. Mat. Nat. (8)},
	68(3):180--187, 1980.
	
	\bibitem{DM1994Critical}
	M.~Degiovanni and M.~Marzocchi.
	\newblock A critical point theory for nonsmooth functionals.
	\newblock {\em Ann. Mat. Pura Appl. (4)}, 167:73--100, 1994.
	
	\bibitem{DIL2015}
	D.~Drusvyatskiy, A.~D. Ioffe, and A.~S. Lewis.
	\newblock Curves of descent.
	\newblock {\em SIAM J. Control Optim.}, 53(1):114--138, 2015.
	
	\bibitem{EK1986}
	S.~N. Ethier and T.~G. Kurtz.
	\newblock {\em Markov processes}.
	\newblock Wiley Series in Probability and Mathematical Statistics. New York,
	1986.
	
	\bibitem{EG2015Measure}
	L.~C. Evans and R.~F. Gariepy.
	\newblock {\em Measure theory and fine properties of functions}.
	\newblock Textbooks in Mathematics. CRC Press, Boca Raton, FL, revised edition,
	2015.
	
	\bibitem{GZ1992}
	L.~Gajek and D.~Zagrodny.
	\newblock Countably orderable sets and their applications in optimization.
	\newblock {\em Optimization}, 26(3-4):287--301, 1992.
	
	\bibitem{Ioffe2017Variational}
	A.~D. Ioffe.
	\newblock {\em Variational analysis of regular mappings}.
	\newblock Springer Monographs in Mathematics. Springer, Cham, 2017.
	\newblock Theory and applications.
	
	\bibitem{Moreau1965}
	J.-J. Moreau.
	\newblock Proximit\'{e} et dualit\'{e} dans un espace hilbertien.
	\newblock {\em Bull. Soc. Math. France}, 93:273--299, 1965.
	
	\bibitem{PSV2021}
	P.~P\'{e}rez-Aros, D.~Salas, and E.~Vilches.
	\newblock Determination of convex functions via subgradients of minimal norm.
	\newblock {\em Math. Program.}, 190(1-2, Ser. A):561--583, 2021.
	
	\bibitem{Rockafellar1966}
	R.~T. Rockafellar.
	\newblock Characterization of the subdifferentials of convex functions.
	\newblock {\em Pacific J. Math.}, 17:497--510, 1966.
	
	\bibitem{Rock70}
	R.~T. Rockafellar.
	\newblock On the maximal monotonicity of subdifferential mappings.
	\newblock {\em Pacific J. Math.}, 33:209--216, 1970.
	
	\bibitem{RockafellarWets1998}
	R.~T. Rockafellar and Roger J.-B. Wets.
	\newblock {\em Variational analysis}.
	\newblock Grundlehren der mathematischen Wissenschaften 317. Springer, 1998.
	
	\bibitem{SV1989}
	D.~J. Smith and M.~K. Vamanamurthy.
	\newblock How small is a unit ball?
	\newblock {\em Math. Mag.}, 62(2):101--107, 1989.
	
	\bibitem{Sturm1998}
	K.~T. Sturm.
	\newblock Diffusion processes and heat kernels on metric spaces.
	\newblock {\em Ann. Probab.}, 26(1):1--55, 1998.
	
	\bibitem{TZ2022}
	L.~Thibault and D.~Zagrodny.
	\newblock Determining functions by slopes.
	\newblock {\em Comm. Contemp. Math.}, 2022.
	\newblock (In press).
	
\end{thebibliography}

\begin{center}
\noindent\rule{4cm}{1.4pt}
\end{center}

%\vspace{0.3cm}
\newpage

\noindent Aris DANIILIDIS

\medskip
\noindent Institute of Statistics and Mathematical Methods in Economics,
VADOR E105-04 \newline TU Wien, Wiedner Hauptstra{\ss }e 8, A-1040 Wien\medskip
\newline(on leave) DIM-CMM, CNRS IRL 2807 \newline Beauchef 851, FCFM,
Universidad de Chile \medskip\newline\noindent E-mail:
\texttt{aris.daniilidis@tuwien.ac.at}\newline\noindent
\texttt{https://www.arisdaniilidis.at/}

\medskip

\noindent Research supported by the grants: \smallskip\newline Austrian Science Fund (FWF P-36344N) (Austria)\\ 
CMM  FB210005 BASAL funds for centers of excellence (ANID-Chile)\newline

\medskip

\noindent Laurent MICLO

\medskip

\noindent Toulouse School of Economics, University Toulouse 1 (Capitole) \newline
1, Esplanade de l'Universit\'{e}, F-31080 Toulouse Cedex 06, France \medskip\newline\noindent E-mail:
\texttt{laurent.miclo@math.cnrs.fr}\newline\noindent
\texttt{https://perso.math.univ-toulouse.fr/miclo/} 

\medskip

\noindent Research funded by the Agence Nationale de la Recherche under grant ANR-17-EURE-0010 (Investissements d'Avenir program).\newline\vspace{0.4cm}

\noindent David SALAS

\medskip

\noindent Instituto de Ciencias de la Ingenieria, Universidad de
O'Higgins\newline Av. Libertador Bernardo O'Higgins 611, Rancagua, Chile
\smallskip

\noindent E-mail: \texttt{david.salas@uoh.cl} \newline\noindent
\texttt{http://davidsalasvidela.cl} \medskip

\noindent Research supported by the grant: \smallskip\newline CMM ACE210010 and FB210005 BASAL funds for centers of excellence (ANID-Chile)\\ FONDECYT 11220586 (Chile)

\end{document}